\renewcommand\eqref[1]{(\ref{#1})} 
\title[Hardy and Rellich identities]{Hardy and Rellich identities and inequalities for Baouendi-Grushin operators via spherical vector fields}
\author[D. Ganguly]{Debdip Ganguly}
\address{
	Debdip Ganguly:
	\endgraf
	Theoretical Statistics and Mathematics Unit
	\endgraf
	Indian Statistical Institute, Delhi Centre
	\endgraf
	S. J. Sansanwal Marg, New Delhi, Delhi 110016
	\endgraf
	India
	\endgraf
	{\it E-mail address} {\rm debdipmath@gmail.com}}
\author[K. Jotsaroop]{K. Jotsaroop}
\address{
	Jotsaroop Kaur:
	\endgraf
	Department of Mathematics
	\endgraf
	Indian Institute of Science Education and Research Mohali
	\endgraf
	Mohali, Punjab 140306
	\endgraf
	India
	\endgraf
	{\it E-mail address} {\rm jotsaroop@iisermohali.ac.in}}	
\author[P. Roychowdhury]{Prasun Roychowdhury}
\address{
	Prasun Roychowdhury:
	\endgraf
	Dipartimento di Matematica e Applicazioni
	\endgraf
	Universit\`a degli Studi di Milano–Bicocca
	\endgraf
	Via Cozzi 55, 20125 Milano
	\endgraf
	Italy
	\endgraf
	{\it E-mail address} {\rm prasunroychowdhury1994@gmail.com}}
\subjclass[2020]{35H10, 26D10, 42B37, 46E35}
\keywords{Grushin space, Hardy identity, Hardy-Rellich identity, Rellich inequality, spherical harmonics}
\date{\today}
\theoremstyle{plain}
\newtheorem{theorem}{Theorem}[section]
\newtheorem{proposition}{Proposition}[section]
\newtheorem{lemma}{Lemma}[section]
\newtheorem{corollary}{Corollary}[section]
\newtheorem{remark}{Remark}[section]
\newtheorem{definition}{Definition}[section]
\numberwithin{equation}{section} \allowdisplaybreaks
\newcommand{\rn}{\mathbb{R}^n}
\newcommand{\re}{\mathbb{R}}
\newcommand{\rno}{\mathbb{R}^{n+1}}
\newcommand{\gradg}{\nabla_G}
\newcommand{\rgradg}{\nabla_{\varrho,G}}
\newcommand{\lapg}{\mathcal{L}_G}
\newcommand{\rlapg}{\mathcal{L}_{\varrho,G}}
\newcommand{\dx}{\:{\rm d}x}
\newcommand{\dy}{\:{\rm d}y}
\newcommand{\dsn}{\:{\rm d}\Omega}
\newcommand{\dt}{\:{\rm d}t}
\newcommand{\dr}{\:{\rm d}\varrho}
\newcommand{\dph}{\:{\rm d}\phi}
\newcommand{\dw}{\:{\rm d}w}
\newcommand{\sm}{\sum_{k=0}^\infty}
\newcommand{\norm}{\bigg|\bigg|}
\newcommand{\izfr}{\int_{0}^R}
\begin{document}
	\begin{abstract}
		For Baouendi-Grushin vector fields, we prove Hardy, Hardy-Rellich, and Rellich identities and inequalities with sharp constants. Our explicit remainder terms significantly improve than those found in the literature. Our arguments are built on abstract Hardy-Rellich identities involving the Bessel pair along with the use of spherical harmonics developed by Garofalo-Shen \cite{GS}. Furthermore, in the spirit of Bez-Machihara-Ozawa  \cite{bmo}, we construct \it spherical vector fields \rm corresponding to the Baouendi-Grushin vector fields and prove identities that, in turn, establish optimal Rellich identities, by comparing the Baouendi-Grushin operator with its radial and spherical components.  We give alternate proofs of Hardy identities and inequalities with enhanced Hardy constants in some subspaces of the Sobolev space, among other things. Additionally, we compute the deficit involving the $L^2$-norm of the Baouendi-Grushin operator and it's radial component with an explicit remainder term, which leads to a comparison of the Baouendi-Grushin operator with it's radial components. As a consequence of the main results, new second-order Heisenberg-Pauli-Weyl uncertainty principles and Hydrogen uncertainty principles are also derived. Furthermore, we also derive certain symmetrization principles green corresponding to the Baouendi-Grushin vector fields.	
	\end{abstract}
	\maketitle
	
	\section{Introduction}
	The study of functional inequalities has become a fascinating branch of analysis and PDEs in recent years. One of the central attractions of this type of study is the Hardy inequality. The basic one-dimensional weighted Hardy inequality \cite[Theorem 330]{hlp} reads as follows: let $f\in C_c^\infty(0,\infty)$ and $\alpha\in (-1,\infty)$, then with the sharp constant, there holds
	$$
	\int_0^\infty \frac{{f^\prime}^2(x)}{x^\alpha}\dx \geq \frac{(\alpha+1)^2}{4}\int_0^\infty \frac{f^2(x)}{x^{\alpha+2}}\dx.
	$$
	
	There has been a growing interest in studying functional inequalities and identities for Hardy-Rellich type for sub-Laplacian as square sums of vector fields on $\mathbb{R}^{n+m}.$ In particular, we consider for integers $n\geq 1, \, m\geq 1$ and real number $\alpha$, the vector fields given by
	
	\begin{equation*}
		X_i=\frac{\partial}{\partial x_i}, \quad i \, =\,1, \ldots, n,   \quad Y_{j} = |x|^{\alpha}\frac{\partial }{\partial y_j}, \ j \, = \, 1, \ldots, m
	\end{equation*}	 
	and the corresponding sub-Laplacian, well-known as the \it Baouendi-Grushin operator\rm $\,$ which was introduced by Baouendi in \cite{boun} and by Grushin in \cite{Gr2, Gr1}:
	
	\begin{equation*}%\label{BGoperator}
		\lapg^{\alpha} \, = \, \Delta_x \, + \, |x|^{2\alpha} \Delta_y.
	\end{equation*}
	When $\alpha =0,$ the operator $\lapg^{0}$ is reduced to the standard Laplacian in $\mathbb{R}^{n + m}.$ If $\alpha$ is a positive integer, i.e, $\alpha \in \mathbb{N},$ the vector fields $X_i's$ and $Y_j's$ satisfy H\"ormander's finite rank condition \cite{Hor1}. But in the general case H\"ormander's condition is incoherent since the vector fields are not sufficiently smooth. Incidentally, we note that a substantial body of literature explores the case of non-smooth vector fields, where the H\"ormander condition does not hold. In this context, while not aiming for completeness, we specifically highlight the seminal work of Franchi and Lanconelli \cite{BF1}. Note that $\lapg^{\alpha}$ does not belong to the class of sub-Laplacian associated with a homogeneous group. However, it is known that $\lapg^{\alpha}$ is a subelliptic operator for $\alpha>0$ (see \cite{BF1, G1}).  The analysis of the Baouendi-Grushin operator $\lapg^{\alpha}$ is subtle, and in particular, we shall focus on the case $\alpha =1$, $n\geq 2$, and $m=1$.  We will now simply refer to the \it Baouendi-Grushin \rm vector field as the ``Grushin vector" field.	
	
	\medskip
	Let us first rewrite the vector fields and the Grushin operator for $\alpha =1$ and $m=1$ explicitly. Let $(x,t)\in \rn\times \re$ with $n\geq 2$. For $i=1,\ldots , n$ and consider the vector fields
	\begin{align*}
		X_i=\frac{\partial}{\partial x_i}, \quad Y = |x|\frac{\partial }{\partial t},
	\end{align*} 
	where $|x|=\sqrt{x_1^2+\cdots+x_n^2}$. Then $\rno$ equipped with the above vector fields is called Grushin space. The Grushin gradient is defined by 
	\begin{align*}
		\gradg = (X_1,\ldots,X_n, Y) = (\nabla_x, |x|\partial_t),
	\end{align*}
	where $\nabla_x$ is the standard Euclidean gradient on $\mathbb{R}^n$. Now recall the Grushin operator  defined by
	\begin{align*}
		\lapg=\Delta_x+|x|^2\frac{\partial^2}{\partial t^2},
	\end{align*}
	where $\Delta_x$ is the standard Laplacian on $\mathbb{R}^n$.
	Also note that, for some vector field $X$, if $\text{div}_GX=\gradg\cdot X$, then $\lapg=\text{div}_G\cdot\gradg$.
	
	Let $\delta_{\lambda}(x,t):=(\lambda x, \lambda^2 t)$ for $\lambda>0$. We can compute that $\lapg(f\circ\delta_{\lambda})=\lambda^2 \left(\lapg f\right)\circ\delta_{\lambda}$. Note that $|\delta_{\lambda}U|=\lambda^{Q}|U|$ for any $U$ Lebesgue measurable set in $\mathbb{R}^{n+1}$ where $Q=n+2$. We define on Grushin space $\rno$, a distance function for each $(x,t)\in\rno$  by
	\begin{align*}
		\varrho(x,t)=\big(|x|^4+4t^2\big)^{\frac{1}{4}}
	\end{align*}
	compatible with the dilation $\delta_{\lambda}$ defined above. From the above definition of $\varrho$, it satisfies the following: $(x,t)\mapsto\varrho(x,t)$ is $C^{\infty}(\rno\setminus\{o\}),$ positive definite, $\varrho(\delta_{\lambda}(x,t))=\lambda \varrho(x,t)$ for $\lambda>0,$ and $o=(\Vec{0},0).$ In fact, $\varrho$ is deeply connected to the fundamental solution of $\lapg$ (see,  \cite[Proposition~2.1]{G1}) and we will call $\varrho$ as a homogeneous norm on $\rno$.
	
	This paper's major goal is to prove the Hardy and Hardy-Rellich inequalities for the Grushin operator by a method that fully addresses the identity framework. The second significant point we discuss is finding the radial and spherical derivatives of the operator $\lapg$ and obtaining an identity incorporating the radial $L^2$-norm and the spherical contribution of the Grushin operator.
	
	\medskip
	
	\subsection{State of the art and main novelties} Let us discuss briefly the current state of the art of the problems in the hypoelliptic settings. First, begin with the Hardy inequality for the Baouendi-Grushin operator established by Garofalo \cite{G1}: Let $x \in \mathbb{R}^n,$ $y \in \mathbb{R}^m,$ $\alpha >0,$ with $n,m \geq 1,$ and $u\in C_c^\infty(\rno \setminus \{o\})$, then with sharp constant there holds
	\begin{multline}\label{hardy}
			\int_{\mathbb{R}^{n+m}} \left( |\nabla_x u|^2\; + \; |x|^{2 \alpha} \; |\nabla_y u|^2\right)\dx \dy  \\ \geq \frac{(Q-2)^2}{4}\int_{\mathbb{R}^{n+m}} \left(\frac{|x|^{2 \alpha}}{|x|^{2 + 2 \alpha} + (1 + \alpha^2)^2 |y|^2} \right)|u|^2 \dx\dy,	
	\end{multline}
	where $Q = n+ (1 + \alpha)m$ is the homogeneous dimension of the Grushin space. Much research has been done on the Hardy inequalities and the best way to improve them is to add a positive remainder term to the inequality's right side \eqref{hardy}. In this regard, Kombe in \cite{IK1} proved an improved weighted Hardy type inequalities for
	Baouendi-Grushin vector fields. There are many results related to the improvement of Hardy inequality for Grushin vector fields in different directions, in particular, for the generalised class of sub-elliptic operator, and weighted $L^p$-versions of \eqref{hardy} has been proved in  \cite{AS}, and \cite{AL1} respectively.  We also refer to the works of \cite{DQN, IK1, KY,  SY, YSK, ZHD} and references therein for other relevant improvements for Grushin vector fields. This is by no means an exhaustive list. It is worth mentioning that Laptev-Ruzhansky-Yessirkegenov \cite[Theorem~2.5]{laptev} were the first to attempt to enhance \eqref{hardy} by replacing the gradient in $x$ variable with radial contribution only.
	Their results states as follows: let $x \in \mathbb{R}^n,$ $y \in \mathbb{R}^m,$ $\alpha >0,$ with $n,m \geq 1$, $Q=n+(1+\alpha)m$,  and $u\in C_c^\infty(\rno \setminus \{o\})$, then with sharp constant there holds
		\begin{multline}\label{hardy-imp}
			\int_{\mathbb{R}^{n+m}} \left( \left|\frac{{\rm d}}{{\rm d}|x|} u\right|^2\; + \; |x|^{2 \alpha} \; |\nabla_y u|^2\right)\dx \dy  \\ \geq \frac{(Q-2)^2}{4}\int_{\mathbb{R}^{n+m}} \left(\frac{|x|^{2 \alpha}}{|x|^{2 + 2 \alpha} + (1 + \alpha^2)^2 |y|^2} \right)|u|^2 \dx\dy.
		\end{multline}
		We note that since we can estimate pointwise $\left|\frac{{\rm d}}{{\rm d}|x|} u\right| \leq |\nabla_x u|$, so \eqref{hardy-imp} gives an immediate improvement of \eqref{hardy}. We improve \eqref{hardy-imp} further by replacing l.h.s of it with only radial derivatives (see Corollary~\ref{wg-hardy}).
	
	The aforementioned inequality has been demonstrated even within the context of Carnot groups, namely inside the Heisenberg group. We refer to the seminal work of Garofalo-Lanconelli \cite{GL}, where a following variant of Hardy inequality on the Heisenberg group $\mathbb{H}^n$ was derived 
	\begin{equation*}
		\int_{\mathbb{H}^n}|\nabla_{\mathbb{H}^n} u|^2\, {\rm d} \xi \geq n^2
		\int_{\mathbb{H}^n} \frac{|u|^2| \psi|^2}{\varrho^2}\, {\rm d} \xi,
	\end{equation*}
	where $\varrho$ and $\psi$ are, respectively, a suitable distance from the origin and a weight function with $0 < \psi < 1$ and the corresponding homogeneous dimension $Q = 2n + 2.$ Moreover, depending on the weight function $\frac{\psi^2}{\varrho^2},$ several other variants of the Hardy inequalities on $\mathbb{H}^n$ were proved in \cite{AIK, AL2, LY, NZW}. Later, Ruzhansky and Suragan extended the above Hardy inequalities to homogeneous groups, for a detailed description of these results, we refer the interested reader to \cite{RS1} (also see \cite{GK1}).   It is important to note that Ruzhansky and Suragan's most recent finding \cite{RS2} has improved the inequality mentioned above in the framework of equality, and hence obtained a sharp remainder term on the right-hand of the inequality \eqref{hardy} for the homogeneous groups (also see \cite{RDS} for related results). They have exploited the group and dilation structures of the homogeneous groups. We will use a unified strategy to propose numerous improvements of Hardy equality (instead of inequality) using Bessel pairs and spherical harmonics in this article. Specifically, we will re-prove all known results with improved remainder terms found in the literature. We can compare our findings with those of the Carnot groups in \cite{ dlt,  fll, GK2, LAM, QHY}. 
	
	\medskip	
	
		The next topic we shall deal with here is the concern with the validity of higher-order Hardy inequalities,  namely, the Rellich inequalities which go back to \cite{Rellich} in the Euclidean case, Hardy-Rellich inequalities go back to \cite{Hardy-Rellich, beckner}, and thereafter several improvements have been made to date. The natural extension for the Grushin operator is relatively new starting from the  work of Kombe \cite{IK1}. In \cite[Theorem~4.5]{IK1}, the author proved weighted Hardy-Rellich inequality and their improvements, which states as follows  for $\alpha = 1$ and $m =1$, $n\geq 1$ (although the author proved for any $\alpha >0$ and $m \geq 1$): $Q=n+2$, then for $u\in C_c^\infty(\rno \setminus \{o\})$ there holds
		\begin{equation}
			\int_{\rno} \varrho^{\gamma} \frac{(\lapg u)^2}{|\gradg \varrho|^2}\dx \dt\geq \frac{(Q-\gamma)^2}{4}\int_{\rno} \varrho^{\gamma}\frac{|\nabla_G u|^2}{\varrho^2}\dx\dt,
		\end{equation}
		where $ 2 < \gamma < Q.$  Subsequently,  in \cite[Theorem 2.4]{sj}, the authors provided an improved version of the Rellich inequality for the Grushin operator.  To our understanding, the following version of the Rellich inequality in the Grushin $\varrho-$gauge ball with radius $R>0$ denoted as $B_{R}(o):=\{(x,t)\in \rno\, :\, \varrho(x,t)<R\}$ with a remainder term was first published online on arXiv circa 2007 but was later published in \cite[Theorem~4.2]{IK1}: For $Q\geq 5$ and $u\in C_c^\infty(B_R(o) \setminus \{ o \})$, then there holds
	\begin{align}\label{rellich}
		\int_{B_R(o)}\frac{(\lapg u)^2}{|\gradg \varrho|^2}\dx \dt\geq \frac{Q^2(Q-4)^2}{16}\int_{B_R(o)} \frac{|u|^2|\gradg \varrho|^2}{\varrho^4}\dx\dt \; + \;  \frac{C}{R^2} \int_{B_R(o)} \dfrac{|u|^2}{\varrho^2} \; {\rm d}x \dt,
	\end{align}
	where $Q$ is the homogeneous dimension of the Grushin space and  $C$ is some positive constant. Furthermore, without remainder term in \eqref{rellich} the Rellich inequality with sharp constant $\frac{Q^2(Q-4)^2}{16}$ was established in \cite[Theorem 2.4]{sj} in the full space. Moreover, in the article \cite[Theorem~2.2]{sj}, a Hardy-Rellich-type inequality was also derived, which states as follows: for $Q\geq 8$ and $u\in C_c^\infty(\rno \setminus \{o\})$, then with sharp constant there holds
	\begin{align}\label{hardy_rellich-hr-ineq}
		\int_{\rno}\frac{(\lapg u)^2}{|\gradg \varrho|^2}\dx \dt\geq \frac{Q^2}{4}\int_{\rno} \frac{|\gradg u|^2}{\varrho^2}\dx\dt.
	\end{align}
    The above Hardy-Rellich inequality \eqref{hardy_rellich-hr-ineq}, with sharp constant $\frac{Q^2}{4}$ with homogeneous dimension $Q\geq 3$ was later established in \cite[Theorem~4.1]{KY1}. It is important to notice that on the lower Euclidean dimensional case the best Hardy-Rellich constant is $\frac{25}{36}$ and $3$ with Euclidean dimensions $3$ and $4$ respectively (see \cite[Theorem~1.1]{caza}) and this significantly differs in the Grushin setting. Driven by the aforementioned outcomes acquired within the framework of the sub-Laplacian, we will present many noteworthy enhancements of Hardy-Rellich and Rellich-type inequalities for the degenerate Grushin vector fields here. To demonstrate identities and inequalities, we will critically utilise the spherical harmonics for the Grushin operator in the present work. To the best of our knowledge, this method is new and offers a novel approach to dealing with such issues. In general, higher-order Hardy inequalities are quite delicate and subtle.  We establish several novel improvements of Hardy-Rellich inequality, and we prove an identity for the Grushin vector fields with sharp constant and with a precise remainder term. 
	
	\medskip
	
		In the  sub-Laplacian setting  L.~D'Ambrosio in \cite{AL2}, proved Rellich inequality on the Heisenberg group of homogeneous  dimension $Q = 2n + 2,$ for $n > 2$ with constant $n^2 (n-2)^2$. Moreover, the inequality is valid for $n > 1$ with $b: =  (n^2-1)^2$ for all the radial functions in  $C_{c}^{\infty}(\mathbb{H}^n),$ and the constant is sharp.
		Attached to the Rellich inequality proven by L.~D'Ambrosio, there is an analogous with a slightly different form of Rellich inequality established by Y.~Qiaohua in \cite{QHY1}. In context to the other sub-Laplacian settings, namely, in the Carnot group, Rellich-type inequalities and their improvement have been quite extensively studied in the recent past, see e.g., \cite{GKY1, RY, JS, XID} and references therein.
	
	\medskip

	We will now quickly go over this article's primary goal. Specifically, the main contributions and novelties of the present article are as follows :	
	
	\begin{itemize}	
		
		\item  In Theorem~\ref{hardy-rell-iden}, we first establish an abstract Hardy identity involving a Bessel pair. This identity can be compared with \cite[Theorem~1.1]{dlt} on homogeneous groups setting. In fact, for the Grushin Vector fields, this identity generalises all of the earlier, well-known Hardy inequality. Additionally, we replicate every known result for the weighted version of the Hardy inequality by selecting certain Bessel pairings. Second, we establish Hardy inequalities for specific subspaces of the smooth, compactly supported functions by tweaking the proof of the abstract identity. Our primary goal is to develop a generic framework for establishing Rellich identity, and we leverage the Hardy identity we establish in Theorem~\ref{hardy-rell-iden} to achieve this.  This could be considered the initial step in establishing the Hardy-Rellich identity.
		
		\medskip 
		
		\item We demonstrate an abstract form of the Hardy-Rellich identity, or more broadly, an inequality, by utilising the aforementioned abstract Hardy identity and the Grushin operator's structure. Given that the argument makes use of spherical harmonics, which Garofalo-Shen \cite{GS} designed for the Grushin operator, it appears to be rather delicate. This is the first use of spherical harmonics to the proof of Hardy-Rellich inequality for the Grushin operator that we are aware of. Our obtained results include explicit remainder terms and are crisp. Rellich-type inequality, however, has been inferred for hypoelliptic operators, such as the Baouendi-Grushin operator, using the Bessel pair in  \cite{RS}.
		
		\medskip 
		
		\item Our ultimate objective is to calculate the radial and spherical contributions of the Grushin operator independently by using an identity involving the radial and spherical contributions of the Grushin operator's $L^2$-norm. We establish many identities that are comparable to the findings for Euclidean vector fields found in  \cite{bmo-snpde, bmo, mow-adv, mow}. Using a more geometric approach, we prove an identity for the Rellich situation, even for non-radial functions, as opposed to an inequality. See Theorem \ref{rellichspherical}.
		
	\end{itemize}
	
	\medskip
	
	%%%%%%%%%%
	
	\subsection{Main Results} We lay down our primary theorems. We establish the Hardy identity involving Bessel pairs in the following theorem. We employ the spherical harmonics for the Grushin vector fields in the proof. We next re-prove various well-known Hardy inequalities for the Grushin operator in the literature with much-improved remainder terms by inserting specific Bessel pairs. This reads as follows.

	\medskip
	
	\begin{theorem}\label{hardy-rell-iden}
		Let $G$ be the Grushin space with dimension $Q$ with $Q\geq 4$ and $B_R(o)$ denotes the $\varrho-$gauge ball with radius $R$ for $0<R\leq \infty$. If $(V, W)$ is a $Q$-dimensional Bessel pair on $(0,R)$ with positive solution $f$, then for $u\in C_c^\infty(B_R(o)\setminus\{o\})$ there holds
		\begin{align*}
			\int_{B_R(o)} V(\varrho)|\gradg u|^2\dx\dt&-\int_{B_R(o)}W(\varrho)u^2|\gradg \varrho|^2\dx\dt\\&=\int_{B_R(o)}V(\varrho)\bigg|\gradg\bigg(\frac{u}{f(\varrho)}\bigg)\bigg|^2f(\varrho)^2\dx\dt,
		\end{align*}
		and
		\begin{align*}
			\int_{B_R(o)} V(\varrho)|\rgradg u|^2\dx\dt&-\int_{B_R(o)}W(\varrho)u^2|\gradg \varrho|^2\dx\dt\\&=\int_{B_R(o)}V(\varrho)\bigg|\rgradg\bigg(\frac{u}{f(\varrho)}\bigg)\bigg|^2f(\varrho)^2\dx\dt.
		\end{align*}
	\end{theorem}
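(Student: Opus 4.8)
The plan is to prove both identities by the classical factorization (ground-state substitution) argument adapted to the Grushin geometry. Setting $v := u/f(\varrho)$, which lies in $C_c^\infty(B_R(o)\setminus\{o\})$ since $f>0$ on $(0,R)$, I would substitute $u = f(\varrho)v$ and expand the energy density. The only Grushin-specific ingredients needed are the chain rule $\gradg(h(\varrho)) = h'(\varrho)\gradg\varrho$, the divergence identity $\lapg\varrho = \tfrac{Q-1}{\varrho}|\gradg\varrho|^2$ (the degenerate analogue of the Euclidean $\Delta|x| = \tfrac{n-1}{|x|}$, valid for the homogeneous gauge), and the integration-by-parts formula $\int_{B_R(o)}\gradg\phi\cdot F\dx\dt = -\int_{B_R(o)}\phi\,(\text{div}_G F)\dx\dt$ for $\phi$ compactly supported away from $o$. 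No boundary terms appear because $v$ inherits the compact support of $u$, so $\varrho$ stays in a compact subset of $(0,R)$ where $f,f'$ are bounded.

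First I would compute, by the chain rule, $\gradg u = f'(\varrho)v\,\gradg\varrho + f(\varrho)\gradg v$, so that, using $2v\,\gradg v = \gradg(v^2)$,
$$ V(\varrho)|\gradg u|^2 = V (f')^2 v^2 |\gradg\varrho|^2 + V f^2 |\gradg v|^2 + V f f'\,\gradg(v^2)\cdot\gradg\varrho. $$
The middle term integrates to the claimed right-hand side $\int_{B_R(o)} V(\varrho)|\gradg(u/f(\varrho))|^2 f(\varrho)^2$. For the cross term I would integrate by parts, moving the divergence onto $V(\varrho)f(\varrho)f'(\varrho)\gradg\varrho$; using $\text{div}_G(g(\varrho)\gradg\varrho) = |\gradg\varrho|^2\big(g'(\varrho) + \tfrac{Q-1}{\varrho}g(\varrho)\big)$ with $g = Vff'$, this converts the cross integral into $-\int_{B_R(o)} v^2|\gradg\varrho|^2\big((Vff')' + \tfrac{Q-1}{\varrho}Vff'\big)\dx\dt$.

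Collecting the three contributions and subtracting $\int W u^2|\gradg\varrho|^2 = \int W f^2 v^2|\gradg\varrho|^2$, all the terms other than $\int Vf^2|\gradg v|^2$ combine into $\int v^2|\gradg\varrho|^2\,\Phi(\varrho)$ with
$$ \Phi = V(f')^2 - (Vff')' - \tfrac{Q-1}{\varrho}Vff' - Wf^2 = -f\Big[(Vf')' + \tfrac{Q-1}{\varrho}Vf' + Wf\Big]. $$
The bracket is exactly the Bessel equation $(\varrho^{Q-1}Vf')' + \varrho^{Q-1}Wf = 0$ divided by $\varrho^{Q-1}$, which vanishes by the defining property of the $Q$-dimensional Bessel pair $(V,W)$ with positive solution $f$. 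Hence $\Phi\equiv 0$ and the first identity follows.

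For the second identity I would repeat the same expansion with $\gradg$ replaced by its radial component $\rgradg$. Writing $\partial_\varrho w := (\gradg w\cdot\gradg\varrho)/|\gradg\varrho|^2$, so that $|\rgradg w|^2 = (\partial_\varrho w)^2|\gradg\varrho|^2$, one gets $\partial_\varrho u = f'v + f\,\partial_\varrho v$; thus the radial-only term $V(f')^2v^2|\gradg\varrho|^2$ and the good term $Vf^2|\rgradg v|^2$ appear exactly as before, while the cross term is $2Vff'v\,\partial_\varrho v\,|\gradg\varrho|^2 = Vff'\,\gradg(v^2)\cdot\gradg\varrho$, identical to the one above because $\partial_\varrho(v^2)|\gradg\varrho|^2 = \gradg(v^2)\cdot\gradg\varrho$. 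Consequently the integration by parts and the Bessel-equation cancellation are verbatim the same, yielding the radial identity. I expect the only genuinely delicate point to be the justification of the divergence identity $\lapg\varrho = \tfrac{Q-1}{\varrho}|\gradg\varrho|^2$ and of the integration by parts in the degenerate setting near the characteristic set $\{x=0\}$; this is where the spherical-harmonic machinery of Garofalo–Shen \cite{GS} enters, after which the remainder of the argument is purely algebraic.
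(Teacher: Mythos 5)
Your proof is correct, but it takes a genuinely different route from the paper. You run the classical ground-state substitution $u=f(\varrho)v$ directly in the Grushin space: the cross term is killed by a single integration by parts against $\mathrm{div}_G\bigl(Vff'\,\gradg\varrho\bigr)$ using $\lapg\varrho=\frac{Q-1}{\varrho}|\gradg\varrho|^2$, and the surviving bracket $(Vf')'+\frac{Q-1}{\varrho}Vf'+Wf$ vanishes by the $Q$-dimensional Bessel equation; the radial identity is indeed verbatim the same because $\gradg(v^2)\cdot\gradg\varrho=|\gradg\varrho|^2\,\partial_\varrho(v^2)$. The paper argues differently: it expands $u=\sum_k d_k(\varrho)\Phi_k(\sigma)$ in the Garofalo--Shen spherical harmonics, reduces both sides to one-dimensional radial integrals mode by mode (using Lemma~\ref{int-by-parts} and orthonormality), and observes that the angular contributions $4\lambda_k\int V d_k^2\varrho^{n-1}\dr$ occur identically on both sides, since $f$ is radial, and cancel. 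Your factorization argument is essentially the method the paper attributes to Flynn--Lam--Lu \cite{fll} on Carnot groups (see the remark following the theorem). What your approach buys is economy and robustness: no eigenfunction expansion or completeness of $\{\Phi_k\}$ is needed, only the chain rule, the gauge identity and integration by parts. What the paper's approach buys is mode-by-mode information: keeping the eigenvalues $\lambda_k$ explicit is exactly what produces Theorem~\ref{sub-hardy}, the enhanced Hardy constant $(j+1)(Q+j-1)$ on the subspaces $\mathcal{S}_j$, which the factorization method cannot see. Two small corrections to your closing paragraph: neither $\lapg\varrho=\frac{Q-1}{\varrho}|\gradg\varrho|^2$ nor the integration by parts near the characteristic set $\{x=0\}$ requires the spherical-harmonic machinery --- since $\nabla_x\varrho=x|x|^2/\varrho^3$ and $|x|\,\partial_t\varrho=2t|x|/\varrho^3$, only polynomials in $x$ over powers of $\varrho$ appear, so all quantities are smooth on $\rno\setminus\{o\}$ and $Y=|x|\partial_t$ integrates by parts in $t$ with no boundary contribution; also $v=u/f$ is only as regular as $f$ (a $C^2$ solution of the ODE, as $V,W$ are merely $C^1$), not $C_c^\infty$, though compactly supported $C^2$ is all your computation actually uses.
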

	
	%%%%%%%%%
	
	\begin{remark}
		{\rm 
			
			An independent proof of \cite[Theorem 3.2]{fll}, which took into account more general Hardy identities on the Carnot groups, is provided by Theorem~\ref{hardy-rell-iden}. Nevertheless, their methodology is dependent on certain factorization lemmas. While our method is quite flexible in applying the above theorem to specific subspaces $\mathcal{S}_j$ (see Section~\ref{sec-prem} for the definition of $\mathcal{S}_j$), where the Hardy identities with an improved constant are derived (see Theorem~\ref{sub-hardy} below), it necessitates the presence of spherical harmonics for the operator and symmetric structure of the ambient space.
		}
	\end{remark}
	%%%%%%%%%
	
	Next, we want to apply Theorem~\ref{hardy-rell-iden} to obtain the identities and inequalities of Hardy-Rellich-type. Handling the second-order identities indeed requires considerable care. After breaking down the Grushin operator on the radial function, we obtain a Rellich identity involving Bessel pairs (for radial functions) by applying the integration by parts formula found in the previously mentioned Hardy identities on radial functions. The outcome is as follows:
	
	\begin{theorem}\label{r-rell}
		Let $G$ be the Grushin space with dimension $Q$ with $Q\geq 4$ and $B_R(o)$ denotes the $\varrho-$gauge ball with radius $R$ for $0<R\leq \infty$. Let $\left( V,  W\right)$ be a $Q$-dimensional Bessel pair on $(0, R)$ with positive solution $f$ on $(0, R)$. Then for all radial function $u \in C_{c}^{\infty}\left(B_{R}(o) \backslash\{o\}\right)$ there holds:
		\begin{align*}
			\int_{B_{R}(o)} V(\varrho) \frac{(\lapg u)^{2}}{|\gradg \varrho|^{2}} \dx\dt & =\int_{B_{R}(o)} W(\varrho)|\gradg u|^{2}\dx\dt \\
			& +(Q-1) \int_{B_{R}(o)}\left(\frac{V(\varrho)}{\varrho^{2}}-\frac{V_{\varrho}(\varrho)}{\varrho}\right)\left|\gradg u\right|^2 \dx\dt \\
			& +\int_{B_{R}(o)} V(\varrho)\left|\nabla_{G}\left(\frac{u_\varrho}{f(\varrho)}\right)\right|^{2} f^{2}(\varrho)\dx\dt,
		\end{align*}
		where the $\varrho$ in suffix means the derivative with respect to the radial part $\varrho$.
	\end{theorem}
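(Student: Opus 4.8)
The plan is to reduce the whole identity to the one–dimensional radial variable and then feed the first (non-radial) identity of Theorem~\ref{hardy-rell-iden} into it. The starting point is the action of $\lapg$ on a radial function. Since the gauge satisfies $\lapg\varrho=(Q-1)\,|\gradg\varrho|^2/\varrho$ — which follows by differentiating $\lapg(\varrho^{2-Q})=0$ away from the origin — for radial $u$ one has
\[
\lapg u=|\gradg\varrho|^2\Big(u_{\varrho\varrho}+\tfrac{Q-1}{\varrho}u_\varrho\Big),
\qquad\text{so}\qquad
\frac{(\lapg u)^2}{|\gradg\varrho|^2}=|\gradg\varrho|^2\Big(u_{\varrho\varrho}+\tfrac{Q-1}{\varrho}u_\varrho\Big)^2 .
\]
Together with $|\gradg u|^2=u_\varrho^2|\gradg\varrho|^2$ and $|\gradg u_\varrho|^2=u_{\varrho\varrho}^2|\gradg\varrho|^2$ (valid since both $u$ and $u_\varrho$ are radial), this lets me express every term of the claim purely through $u_\varrho,u_{\varrho\varrho},V,V_\varrho$ and $|\gradg\varrho|^2$.

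The second ingredient is to apply the first identity of Theorem~\ref{hardy-rell-iden} to the radial test function $w:=u_\varrho$, which again lies in $C_c^\infty(B_R(o)\setminus\{o\})$. This produces
\[
\int_{B_R(o)}V(\varrho)|\gradg u_\varrho|^2\dx\dt-\int_{B_R(o)}W(\varrho)u_\varrho^2|\gradg\varrho|^2\dx\dt
=\int_{B_R(o)}V(\varrho)\Big|\gradg\Big(\tfrac{u_\varrho}{f}\Big)\Big|^2f^2\dx\dt,
\]
whose right-hand side is exactly the last term in the asserted identity. Substituting this in, and using $|\gradg u|^2=u_\varrho^2|\gradg\varrho|^2$ so that the two $W$–integrals cancel, the theorem is reduced to the purely radial statement
\[
\int_{B_R(o)}V|\gradg\varrho|^2\Big(u_{\varrho\varrho}+\tfrac{Q-1}{\varrho}u_\varrho\Big)^2\dx\dt
=(Q-1)\int_{B_R(o)}\Big(\tfrac{V}{\varrho^2}-\tfrac{V_\varrho}{\varrho}\Big)u_\varrho^2|\gradg\varrho|^2\dx\dt
+\int_{B_R(o)}V|\gradg\varrho|^2u_{\varrho\varrho}^2\dx\dt .
\]

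To finish I would expand the square on the left. The $u_{\varrho\varrho}^2$ terms match at once, leaving the cross term $\tfrac{2(Q-1)}{\varrho}u_{\varrho\varrho}u_\varrho$ and the $u_\varrho^2$ term. For the cross term I write $2u_{\varrho\varrho}u_\varrho=(u_\varrho^2)_\varrho$ and use $\gradg(u_\varrho^2)\cdot\gradg\varrho=(u_\varrho^2)_\varrho|\gradg\varrho|^2$, then integrate by parts on the Grushin space against the radial field $\psi(\varrho)\gradg\varrho$ with $\psi=V/\varrho$. The divergence formula
\[
\text{div}_G\big(\psi(\varrho)\gradg\varrho\big)=\Big(\psi_\varrho+\tfrac{Q-1}{\varrho}\psi\Big)|\gradg\varrho|^2,
\]
itself a consequence of $\lapg\varrho=(Q-1)|\gradg\varrho|^2/\varrho$, turns the cross term into $-\int u_\varrho^2\big(\tfrac{V_\varrho}{\varrho}+\tfrac{(Q-2)V}{\varrho^2}\big)|\gradg\varrho|^2$; adding the $u_\varrho^2$ term $\int\tfrac{(Q-1)V}{\varrho^2}u_\varrho^2|\gradg\varrho|^2$ from the expanded square collapses the coefficients exactly to $\tfrac{V}{\varrho^2}-\tfrac{V_\varrho}{\varrho}$. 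All boundary terms vanish because $u$, and hence $u_\varrho^2$, is compactly supported in $B_R(o)\setminus\{o\}$.

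The main obstacle I anticipate is the bookkeeping in this last integration by parts: one must track the interplay between the $(Q-1)/\varrho$ factor coming from $\lapg\varrho$ and the derivative of $\psi=V/\varrho$, so that the spurious $(Q-2)V/\varrho^2$ contribution cancels precisely against the $(Q-1)V/\varrho^2$ term generated by expanding the square, leaving $V/\varrho^2-V_\varrho/\varrho$. This cancellation is where the $Q$-dependence of the stated identity is forced. Note that the Bessel-pair hypothesis is not used directly in this radial computation; it enters only implicitly, through the existence of the positive solution $f$ that allows Theorem~\ref{hardy-rell-iden} to be applied to $u_\varrho$ in the second step.
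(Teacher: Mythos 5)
Your proposal is correct and follows essentially the same route as the paper's proof: expand the square in $(\lapg u)^2$ on radial functions, absorb the $u_{\varrho\varrho}^2$ term by applying Theorem~\ref{hardy-rell-iden} to the radial function $u_\varrho$, and turn the cross term, via integration by parts, into the $(Q-1)\big(\tfrac{V}{\varrho^{2}}-\tfrac{V_\varrho}{\varrho}\big)$ contribution. The only difference is cosmetic: the paper carries out the integration by parts in the one-dimensional radial variable after reducing everything to polar coordinates, whereas you work intrinsically on the Grushin space using $\lapg\varrho=(Q-1)|\gradg\varrho|^2/\varrho$ and the divergence of the field $\tfrac{V}{\varrho}\gradg\varrho$ --- the same computation in different clothing.
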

	
	%%%%%%%%
	\begin{remark}
		{\rm It is important to note that the aforementioned theorem refers to identity rather than inequality. We may now deduce Hardy-Rellich identities with sharp constant and precise remainder terms by selecting appropriate Bessel pairs (see Corollary~\ref{cor-hr-r-rad}). In the Euclidean setting, the aforementioned can also be compared with \cite[Theorem 1.5]{dll}. }
	\end{remark}
	%%%%%%%%
	
	We now demonstrate the Hardy-Rellich inequality for functions, which need not be radial. We employ the spherical harmonics to address the non-radial component of the Grushin operator in order to extend Theorem~\ref{r-rell} to the non-radial function. Restricted to the Grushin sphere, the proof contains a few interesting stages that can be handled using the spectral information of the Grushin operator.  This brings us to the subsequent theorem:
	
	\begin{theorem}\label{nr-rell}
		Let $G$ be the Grushin space with dimension $Q$ with $Q\geq 4$ and $B_R(o)$ denotes the $\varrho-$gauge ball with radius $R$ for $0<R\leq \infty$.	Let $\left( V,  W\right)$ be a $Q$-dimensional Bessel pair on $(0, R)$ with positive solution $f$ on $(0, R)$ with $V\geq 0$. Then for all $u \in C_{c}^{\infty}\left(B_{R}(o) \backslash\{o\}\right)$ there holds:
		\begin{align}\label{nr-rell-eqn}
			\int_{B_{R}(o)} V(\varrho) \frac{(\lapg u)^{2}}{|\gradg \varrho|^{2}} \dx\dt & \geq\int_{B_{R}(o)} W(\varrho)|\gradg u|^{2}\dx\dt \nonumber \\
			& +(Q-1) \int_{B_{R}(o)}\left(\frac{V(\varrho)}{\varrho^{2}}-\frac{V_{\varrho}(\varrho)}{\varrho}\right)\left|\gradg u\right|^2 \dx\dt \nonumber \\
			& +\int_{B_{R}(o)} V(\varrho)\left|\nabla_{G}\left(\frac{u_\varrho}{f(\varrho)}\right)\right|^{2} f^{2}(\varrho)\dx\dt,
		\end{align}
		where the $\varrho$ in suffix means the derivative with respect to the radial part $\varrho$ and with the assumption
		\begin{align}\label{cond}
			(Q-5) \frac{V(\varrho)}{\varrho^{2}}+3 \frac{V_{\varrho}(\varrho)}{\varrho}-V_{\varrho\varrho}(\varrho) \geq 0 .
		\end{align}	
	\end{theorem}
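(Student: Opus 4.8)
The plan is to project the estimate onto the spherical harmonics of the Grushin sphere constructed by Garofalo--Shen \cite{GS}, reduce \eqref{nr-rell-eqn} to a family of one–dimensional inequalities in $\varrho$, and let the radial identity of Theorem~\ref{r-rell} dispatch the lowest mode while condition \eqref{cond} controls the higher ones. Writing points of $B_R(o)\setminus\{o\}$ in polar coordinates $(\varrho,\sigma)$ adapted to the homogeneous norm, I would expand $u=\sum_{k\ge 0}a_k(\varrho)\,\Phi_k(\sigma)$, where $\{\Phi_k\}$ is the orthonormal family (for the weighted surface measure $|\gradg\varrho|^2\,d\sigma$) of eigenfunctions of the angular Grushin Laplacian $\Delta_\sigma$, normalised by $\Delta_\sigma\Phi_k=-\lambda_k|\gradg\varrho|^2\Phi_k$ with $0=\lambda_0<\lambda_1\le\lambda_2\le\cdots$. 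The decisive structural identity, which I take from \cite{GS}, is
\[
\frac{\lapg\big(a_k(\varrho)\Phi_k(\sigma)\big)}{|\gradg\varrho|^2}=\Big(a_k''+\tfrac{Q-1}{\varrho}a_k'-\tfrac{\lambda_k}{\varrho^2}a_k\Big)\Phi_k ,
\]
(primes denoting $\varrho$–derivatives), together with $\lapg\varrho=\frac{Q-1}{\varrho}|\gradg\varrho|^2$ and the relations $\int_\Sigma|\gradg\varrho|^2\Phi_j\Phi_k\,d\sigma=0$ ($j\ne k$) and $\int_\Sigma|\nabla_{G,\sigma}\Phi_k|^2\,d\sigma=\lambda_k\int_\Sigma|\gradg\varrho|^2\Phi_k^2\,d\sigma$. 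With these, all four integrals appearing in \eqref{nr-rell-eqn} diagonalise into sums over $k$ of one–dimensional integrals against $\varrho^{Q-1}\,d\varrho$, the angular contributions of $|\gradg u|^2$ and of the remainder producing respectively the factors $\lambda_k a_k^2/\varrho^2$ and $\lambda_k (a_k')^2/\varrho^2$.

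For the radial mode $k=0$ the resulting one–dimensional statement is exactly Theorem~\ref{r-rell} applied to $a_0$, so that mode contributes an equality. For $k\ge 1$ I would write the mode-$k$ part of the left-hand side as $\int_0^R V\big(La_k-\frac{\lambda_k}{\varrho^2}a_k\big)^2\varrho^{Q-1}\,d\varrho$ with $La_k:=a_k''+\frac{Q-1}{\varrho}a_k'$, and expand the square. The piece $\int V(La_k)^2\varrho^{Q-1}$ is handled by Theorem~\ref{r-rell} (equivalently, by applying the radial Hardy identity of Theorem~\ref{hardy-rell-iden} to $a_k'$) and reproduces precisely the radial part of the right-hand side of \eqref{nr-rell-eqn}. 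Hence the mode-$k$ surplus equals $\lambda_k$ times a one–dimensional functional $E_k$ in $a_k$, assembled from the cross term $-2\frac{\lambda_k}{\varrho^2}a_k La_k$, the quadratic term $\frac{\lambda_k^2}{\varrho^4}a_k^2$, and the angular parts of the three right-hand side integrals. Integrating the cross term by parts in $\varrho$, moving all derivatives onto the weight $V\varrho^{Q-1}$, is where the constant $Q-5$ is generated: the coefficient of $a_k^2$ organises into the combination $(Q-5)\frac{V}{\varrho^2}+3\frac{V_\varrho}{\varrho}-V_{\varrho\varrho}$ of \eqref{cond} plus a residual still involving $W$, $\frac{V_\varrho}{\varrho}$ and a surviving $\int V\varrho^{Q-3}(a_k')^2$.

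To close the argument I would display $E_k$ as the \eqref{cond}–combination acting on $a_k^2$ plus a manifestly nonnegative remainder. The remainder is treated by completing the square against the Bessel solution $f$, using a weighted Hardy identity of the form $\int_0^R V\varrho^{Q-3}(a_k')^2-\int_0^R W\varrho^{Q-3}a_k^2=\int_0^R V\varrho^{Q-3}f^2\big((a_k/f)'\big)^2+2\int_0^R V\varrho^{Q-4}\tfrac{f'}{f}a_k^2$, whose square term is nonnegative since $V\ge 0$. Because $E_k$ depends affinely and increasingly on $\lambda_k$ (again as $V\ge 0$), it then suffices to verify nonnegativity at the smallest nonzero eigenvalue $\lambda_1=Q-1$ of the Grushin sphere — the value forced by the fact that the degree-one harmonics $x_i/\varrho$ satisfy $\Delta_\sigma\Phi=-(Q-1)|\gradg\varrho|^2\Phi$ — and at this critical eigenvalue the surplus is governed by \eqref{cond}. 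Summing the mode inequalities over $k$ then yields \eqref{nr-rell-eqn}.

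The main obstacle I anticipate is exactly this final bookkeeping. Since $|\gradg\varrho|^2=|x|^2/\varrho^2$ genuinely depends on $\sigma$, one must carry the weighted surface measure consistently through every decomposition, checking that the angular splittings of $\frac{(\lapg u)^2}{|\gradg\varrho|^2}$, of $|\gradg u|^2$ and of $|\gradg(u_\varrho/f)|^2$ are mutually compatible. The delicate step is to prove that the residual left over by the \eqref{cond}–combination is indeed absorbed by the nonnegative square and by the spectral gap $\lambda_k\ge Q-1$ — that is, that \eqref{cond} together with $V\ge 0$ and the Bessel structure suffices, with no further hypothesis — rather than only at the special weight $V\equiv 1$; establishing this uniformly in $V$ is where the real work lies.
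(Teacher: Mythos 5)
Your outline coincides with the paper's strategy --- spherical-harmonic diagonalisation, Theorem~\ref{r-rell} applied to each radial coefficient, and reduction of the angular surplus to the lowest nonradial eigenvalue via the spectral gap --- and your one-dimensional identity
\begin{align*}
\int_0^R V\varrho^{Q-3}(a_k')^2\dr-\int_0^R W\varrho^{Q-3}a_k^2\dr=\int_0^R V\varrho^{Q-3}f^2\left[\left(\frac{a_k}{f}\right)'\right]^2\dr+2\int_0^R V\varrho^{Q-4}\frac{f'}{f}\,a_k^2\dr
\end{align*}
is correct as stated. The genuine gap is the step you yourself defer in your final paragraph: completing the square against $f$ at the shifted weight $\varrho^{Q-3}$ leaves the term $2\int_0^R V\varrho^{Q-4}(f'/f)a_k^2\dr$, which has no sign and is not controlled by \eqref{cond}. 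This is not mere bookkeeping. Take the standard pair $V\equiv1$, $W=\frac{(Q-2)^2}{4\varrho^2}$, $f=\varrho^{-\frac{Q-2}{2}}$, for which \eqref{cond} holds precisely when $Q\geq5$. Running your scheme, the mode-$k$ coefficient of $a_k^2\varrho^{Q-3}$ collapses to $(4\lambda_k-5)/\varrho^2$ in the paper's normalisation $\lambda_k=k(k+n)/4$ (your $\lambda_k$ is the paper's $4\lambda_k$); at $k=1$, where $4\lambda_1=n+1=Q-1$, this equals $(Q-6)/\varrho^2$, which is negative for $Q=5$ --- a case the theorem covers. Rescuing the argument would require extracting a further Hardy-type inequality from your square term $\int_0^R V\varrho^{Q-3}f^2((a_k/f)')^2\dr$, uniformly in $V$ and $f$, and that is exactly the unproved claim you concede at the end.

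The paper avoids this issue entirely by a different completion of the square: in its Step~5 it substitutes $b_k:=a_k/\varrho$ and applies the one-dimensional form of Theorem~\ref{hardy-rell-iden} to $b_k$ at the natural weight $\varrho^{Q-1}$ --- equivalently, it completes the square against $\varrho f$ rather than $f$ --- which yields
\begin{align*}
\int_0^R V(a_k')^2\varrho^{Q-3}\dr&=\int_0^R W a_k^2\varrho^{Q-3}\dr+\int_0^R Vf^2\left[\left(\frac{a_k}{\varrho f}\right)'\right]^2\varrho^{Q-1}\dr\\&\quad-\int_0^R V_\varrho\, a_k^2\varrho^{Q-4}\dr-(Q-3)\int_0^R V a_k^2\varrho^{Q-5}\dr.
\end{align*}
Now the correction terms involve only $V$ and $V_\varrho$ and never $f$: the $W$-terms cancel identically in the angular surplus, the remaining square is manifestly nonnegative since $V\geq0$, and after the forced integrations by parts the residual coefficient assembles into exactly $4(\lambda_k-1)\frac{V}{\varrho^2}+3\frac{V_\varrho}{\varrho}-V_{\varrho\varrho}$, which is nonnegative by \eqref{cond} together with $4(\lambda_k-1)=k(n+k)-4\geq Q-5$ for $k\geq1$. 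The substitution $b_k=a_k/\varrho$ is the single missing idea; with it your argument becomes the paper's proof.
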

	%%%%%%%%%%%
	
	\begin{remark}
		{\rm	
			The assumption  \eqref{cond} is not too restrictive. In fact, in the seminal work of \cite[Theorem 3.1-3.3]{GM}, in the Euclidean space, a similar assumption was introduced involving both $V$ and $W,$ and therefore \eqref{cond} is less restrictive compared to \cite{GM}.  Our assumption provides a wide class of Bessel pairs for which \eqref{cond} is satisfied.  However, such an assumption, which only depends on $V$, was first established in \cite[Theorem 3.2]{BGR-21} on the Hyperbolic space. More recently, there has been a new development in the Euclidean setting in \cite{dll}, where the authors refined the assumption of \cite{GM} and proved several interesting consequences of their results. 
		}		
	\end{remark}	
	
	\begin{remark}\label{rem1}
		{\rm
			We note that in Theorem~\ref{nr-rell}, when we move from radial to non-radial functions, we break equality and ultimately prove an inequality in \eqref{nr-rell-eqn}. It is, therefore, reasonable to inquire as to whether we might demonstrate equality in \eqref{nr-rell-eqn} for a specific selection of $V$ and $W$.
		}		
	\end{remark}	
	
	As an immediate consequence, we will see the Hardy-Rellich and Rellich inequalities with non-negative remainder terms. We obtain the following corollaries. 
	\begin{corollary}\label{cor-hr-r}
		Let $Q\geq 5$ and $u\in C_c^\infty(\rno \setminus \{o\})$. Then there holds
		\begin{itemize}
			\item[(a)] $(\text{Hardy-Rellich inequality})$
			\begin{align*}
				\int_{\rno}\frac{(\lapg u)^{2}}{|\gradg \varrho|^{2}} \dx\dt & \geq \frac{Q^2}{4} \int_{\rno} \frac{|\gradg u|^{2}}{\varrho^2}\dx\dt \nonumber \\
				&  +\int_{\rno}\varrho^{2-Q} \left|\nabla_{G}\left(u_\varrho \varrho^{\frac{Q-2}{2}}\right)\right|^{2}  \dx\dt.
			\end{align*}
			\item[(b)] $(\text{Rellich inequality})$
			\begin{align*}
				\int_{\rno}\frac{(\lapg u)^2}{|\gradg \varrho|^2}\dx \dt&\geq\frac{Q^2(Q-4)^2}{16}\int_{\rno} \frac{|u|^2|\gradg \varrho|^2}{\varrho^4}\dx\dt\\& + \frac{Q^2}{4}\int_{\rno}\varrho^{2-Q}\left|\gradg\left(u\varrho^{\frac{Q-4}{2}}\right)\right|^2\dx\dt\\
				&  +\int_{\rno} \varrho^{2-Q}\left|\nabla_{G}\left(u_\varrho \varrho^{\frac{Q-2}{2}}\right)\right|^{2} \dx\dt.
			\end{align*}
		\end{itemize}	
	\end{corollary}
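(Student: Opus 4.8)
The plan is to derive both inequalities directly from Theorem~\ref{nr-rell} by making explicit, judicious choices of the Bessel pair, and in the case of (b) to post-process the resulting lower bound with a Hardy identity supplied by Theorem~\ref{hardy-rell-iden}. The conceptual content is already packaged in those two theorems; what remains is to exhibit the correct weights $V$, $W$ and the corresponding positive solution $f$, and to verify the defining Bessel ODE together with the structural hypothesis \eqref{cond}.

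For part (a), I would take $R=\infty$, $V\equiv 1$, and $W(\varrho)=\frac{(Q-2)^2}{4}\varrho^{-2}$. The pair $(V,W)$ solves the $Q$-dimensional Bessel equation $(r^{Q-1}Vf')'+r^{Q-1}Wf=0$ with the positive solution $f(\varrho)=\varrho^{-\frac{Q-2}{2}}$, as a one-line substitution confirms. Since $V\equiv 1$, the hypothesis \eqref{cond} reduces to $(Q-5)\varrho^{-2}\geq 0$, i.e. precisely the assumption $Q\geq 5$, and $V\geq 0$ holds trivially. Plugging these into \eqref{nr-rell-eqn}, the first two integrals on the right collapse, because $\left(\frac{(Q-2)^2}{4}+(Q-1)\right)=\frac{Q^2}{4}$, to $\frac{Q^2}{4}\int_{\rno}\varrho^{-2}|\gradg u|^2\dx\dt$, while the remainder becomes $\int_{\rno}Vf^2\big|\gradg(u_\varrho/f)\big|^2=\int_{\rno}\varrho^{2-Q}\big|\gradg(u_\varrho\varrho^{\frac{Q-2}{2}})\big|^2\dx\dt$, since $Vf^2=\varrho^{2-Q}$ and $1/f=\varrho^{\frac{Q-2}{2}}$. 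This is exactly the asserted Hardy--Rellich inequality.

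For part (b), the idea is to keep the square remainder produced in (a) and to further lower-bound its leading term $\frac{Q^2}{4}\int_{\rno}\varrho^{-2}|\gradg u|^2\dx\dt$ by invoking the first identity of Theorem~\ref{hardy-rell-iden} with the Bessel pair $V(\varrho)=\varrho^{-2}$, $W(\varrho)=\frac{(Q-4)^2}{4}\varrho^{-4}$ and positive solution $f(\varrho)=\varrho^{-\frac{Q-4}{2}}$; again a direct substitution into $(r^{Q-3}f')'+r^{Q-1}Wf=0$ verifies the pairing. That identity reads $\int_{\rno}\varrho^{-2}|\gradg u|^2-\frac{(Q-4)^2}{4}\int_{\rno}\varrho^{-4}u^2|\gradg\varrho|^2=\int_{\rno}\varrho^{2-Q}\big|\gradg(u\varrho^{\frac{Q-4}{2}})\big|^2$, since $Vf^2=\varrho^{2-Q}$ and $1/f=\varrho^{\frac{Q-4}{2}}$. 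Substituting this equality for $\int_{\rno}\varrho^{-2}|\gradg u|^2$ into the inequality from (a) and distributing the factor $\frac{Q^2}{4}$ yields the three-term lower bound of (b): the sharp Rellich constant $\frac{Q^2(Q-4)^2}{16}$ in front of the Hardy weight, the factor $\frac{Q^2}{4}$ in front of the first square remainder, and the second square remainder carried over verbatim from (a).

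The only genuinely delicate point is not in (a)--(b) themselves, which are bookkeeping once the weights are chosen, but upstream: one must be confident that $V=\varrho^{-2}$ and $V\equiv 1$ are admissible weights for the respective theorems (positivity of $f$ on $(0,\infty)$, and for (a) the compatibility of \eqref{cond} with $Q\geq 5$), and that passing to $R=\infty$ is legitimate for $u\in C_c^\infty(\rno\setminus\{o\})$. The main obstacle is therefore simply to identify the two Bessel pairs so that the constants telescope exactly as $\frac{(Q-2)^2}{4}+(Q-1)=\frac{Q^2}{4}$ and so that the weight $\varrho^{2-Q}$ appears uniformly in every remainder term; once these algebraic matchings are in place, both inequalities follow by direct substitution with no further estimation.
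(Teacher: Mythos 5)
Your proposal is correct and coincides with the paper's own proof: part (a) is obtained by applying Theorem~\ref{nr-rell} with the $Q$-dimensional Bessel pair $\bigl(1,\tfrac{(Q-2)^2}{4}\varrho^{-2}\bigr)$ and $f(\varrho)=\varrho^{-\frac{Q-2}{2}}$ (with \eqref{cond} reducing exactly to $Q\geq 5$), and part (b) follows by substituting the weighted Hardy identity with $V=\varrho^{-2}$, $W=\tfrac{(Q-4)^2}{4}\varrho^{-4}$ into the first term of (a). The only cosmetic difference is that the paper invokes this identity through Corollary~\ref{wg-hardy} with $\alpha=2$ rather than citing Theorem~\ref{hardy-rell-iden} directly, which is the same computation.
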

	
	Let us rewrite the result for the radial functions. Instead of inequality, we will have an identity.
	\begin{corollary}\label{cor-hr-r-rad}
		Let $Q\geq 4$. Then for all radial functions $u\in C_c^\infty(\rno \setminus \{o\})$, there holds
		\begin{itemize}
			\item[(a)] $(\text{Hardy-Rellich identity})$
			\begin{align*}
				\int_{\rno}\frac{(\lapg u)^{2}}{|\gradg \varrho|^{2}} \dx\dt & = \frac{Q^2}{4} \int_{\rno} \frac{|\gradg u|^{2}}{\varrho^2}\dx\dt \nonumber \\
				&  +\int_{\rno}\varrho^{2-Q} \left|\nabla_{G}\left(u_\varrho \varrho^{\frac{Q-2}{2}}\right)\right|^{2}  \dx\dt.
			\end{align*}
			\item[(b)] $(\text{Rellich identity})$
			\begin{align*}
				\int_{\rno}\frac{(\lapg u)^2}{|\gradg \varrho|^2}\dx \dt&=\frac{Q^2(Q-4)^2}{16}\int_{\rno} \frac{|u|^2|\gradg \varrho|^2}{\varrho^4}\dx\dt\\& + \frac{Q^2}{4}\int_{\rno}\varrho^{2-Q}\left|\gradg\left(u\varrho^{\frac{Q-4}{2}}\right)\right|^2\dx\dt\\
				&  +\int_{\rno} \varrho^{2-Q}\left|\nabla_{G}\left(u_\varrho \varrho^{\frac{Q-2}{2}}\right)\right|^{2} \dx\dt.
			\end{align*}
		\end{itemize}
	\end{corollary}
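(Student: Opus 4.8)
The plan is to obtain both identities as direct specializations of Theorem~\ref{r-rell} (which already furnishes a genuine \emph{equality} for radial functions) together with one further application of the Hardy identity in Theorem~\ref{hardy-rell-iden}, by making two explicit choices of $Q$-dimensional Bessel pairs and letting $R=\infty$. Because both theorems yield identities (not merely inequalities) on radial functions, nothing is lost and equality propagates to the final statement. The entire analytic content is carried by those two theorems; the corollary reduces to selecting the correct Bessel pairs and tracking powers of $\varrho$.

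For part (a) I would apply Theorem~\ref{r-rell} with the pair $(V,W)=\bigl(1,\tfrac{(Q-2)^2}{4\varrho^2}\bigr)$ and positive solution $f(\varrho)=\varrho^{-(Q-2)/2}$. A one-line check shows $\bigl(\varrho^{Q-1}f'\bigr)'+\tfrac{(Q-2)^2}{4}\varrho^{Q-3}f=0$, so $(V,W)$ is indeed a $Q$-dimensional Bessel pair with the stated $f$. With $V\equiv 1$ the middle term of Theorem~\ref{r-rell} becomes $(Q-1)\int \varrho^{-2}|\gradg u|^2$, and adding it to $\int W|\gradg u|^2=\tfrac{(Q-2)^2}{4}\int \varrho^{-2}|\gradg u|^2$ collapses the coefficient via $(Q-2)^2+4(Q-1)=Q^2$ to exactly $\tfrac{Q^2}{4}$. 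Since $f^2=\varrho^{2-Q}$ and $u_\varrho/f=u_\varrho\varrho^{(Q-2)/2}$, the remainder term is precisely $\int \varrho^{2-Q}\bigl|\gradg(u_\varrho\varrho^{(Q-2)/2})\bigr|^2$, which is (a).

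For part (b) I would take the leading term $\tfrac{Q^2}{4}\int \varrho^{-2}|\gradg u|^2$ of identity (a) and decompose it once more through the first Hardy identity of Theorem~\ref{hardy-rell-iden}, now applied with $(V,W)=\bigl(\tfrac{1}{\varrho^2},\tfrac{(Q-4)^2}{4\varrho^4}\bigr)$ and positive solution $f(\varrho)=\varrho^{-(Q-4)/2}$, where one verifies $\bigl(\varrho^{Q-3}f'\bigr)'+\tfrac{(Q-4)^2}{4}\varrho^{Q-5}f=0$. Using $f^2\varrho^{-2}=\varrho^{2-Q}$ and $u/f=u\varrho^{(Q-4)/2}$, this gives
\[
\int \frac{|\gradg u|^2}{\varrho^2}=\frac{(Q-4)^2}{4}\int \frac{u^2|\gradg\varrho|^2}{\varrho^4}+\int \varrho^{2-Q}\bigl|\gradg\bigl(u\varrho^{(Q-4)/2}\bigr)\bigr|^2 .
\]
Substituting this into (a) and distributing the factor $\tfrac{Q^2}{4}$ produces exactly the three terms of (b), with the leading constant $\tfrac{Q^2}{4}\cdot\tfrac{(Q-4)^2}{4}=\tfrac{Q^2(Q-4)^2}{16}$.

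The computations are elementary power-law manipulations, so I do not anticipate a genuine obstacle. The only points deserving care are (i) confirming that each chosen $(V,W)$ really satisfies the defining ODE of a $Q$-dimensional Bessel pair with the prescribed positive $f$, and (ii) the exponent bookkeeping that sends the weights to the precise forms $\varrho^{2-Q}$ and $\varrho^{4-Q}$ demanded by the statement; both are verified by direct differentiation. Since $f$ is positive throughout $(0,\infty)$ and $u$ is compactly supported away from $o$, all integrals are finite and the passage to $R=\infty$ is legitimate, so the corollary follows immediately from Theorems~\ref{r-rell} and~\ref{hardy-rell-iden}.
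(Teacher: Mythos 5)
Your proposal is correct and is essentially the paper's own proof: part (a) is Theorem~\ref{r-rell} specialized to the $Q$-dimensional Bessel pair $\bigl(1,\tfrac{(Q-2)^2}{4\varrho^2}\bigr)$ with $f(\varrho)=\varrho^{-\frac{Q-2}{2}}$ (using the same collapse $(Q-2)^2+4(Q-1)=Q^2$), and part (b) substitutes the weighted Hardy identity into the leading term of (a), your application of Theorem~\ref{hardy-rell-iden} with $\bigl(\varrho^{-2},\tfrac{(Q-4)^2}{4}\varrho^{-4}\bigr)$ being precisely the paper's invocation of Corollary~\ref{wg-hardy} with $\alpha=2$. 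There is nothing to correct.
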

	
	The above results extensively make use of the notion of spherical harmonics developed for $\lapg$ by Garafalo and Shen in the case $n\geq 2$, $m=1$ and $\alpha=1$ and the explicit form of $\varrho$. In \cite{HRL} spherical harmonics were developed for $\lapg^{\alpha}$ with $\alpha\in \mathbb{N}$, $n\geq 2$ and $m=1$. Very recently in \cite[Section~3]{bie-lian} spherical harmonic for Grushin operator were studied for $\lapg^{\alpha}$ with $\alpha\in \mathbb{N}$, $n\geq 2$ and $m\geq 2$. Our methods in principal work for these cases and the analysis performed in the present article could be extended to the general case for $\alpha\in\mathbb{N}$, $n\geq 2$ and $m\geq 1$ for the sake of clarity, we state our results for the $\alpha=1=m$ and $n\geq 2$ case.
	
	\medskip
	
	Given Remark~\ref{rem1}, we shall define \it spherical vector fields \rm corresponding to the Grushin vector fields. 
	
	%%%%%%%%%%%%%%%%
	
	\begin{definition}
		{\rm
			Let for $j = 1, 2, \ldots, n$ we define 
			$$
			L_j=\frac{\partial}{\partial x_j}-\frac{\partial\varrho}{\partial x_j}\frac{\partial}{\partial \varrho} \quad \mbox{and} \quad 
			L_{n+1}=|x|\frac{\partial}{\partial t}-|x|\frac{\partial\varrho}{\partial t}\frac{\partial}{\partial \varrho}.
			$$
		}
	\end{definition}
	Recall that $\varrho$ is the homogeneous norm corresponding to the Grushin operator $\lapg$.
	Using the expression of $\varrho$ we can compute that $\frac{\partial\varrho}{\partial x_j}=\frac{x_j|x|^2}{\varrho^3}$ for $j=1,2,\ldots,n$ and $\frac{\partial\varrho}{\partial t}=\frac{2t}{\varrho^3}$. In the Euclidean setting, analogous vector fields were introduced by  \cite{bmo-snpde, bmo}, also (see \cite{EL}). Note that to the best of our knowledge, this is the first instance where spherical vector fields are defined in the sub-elliptic setting. In Lemma~\ref{iden-lem} below,  we shall provide all necessary (although simple) identities involving the vector fields $L_j.$ In particular, one can immediately see the following identity:
	
	$$
	\lapg = \rlapg +\sum_{j=1}^{n+1}L_j^2, 
	$$	
	where $\rlapg$ is the radial component of $\lapg$ and is defined in Section~\ref{sec-prem}. The above identity leads us to the following theorem, which compares the Grushin operator with its radial and spherical components. 
	\begin{theorem}\label{rellichspherical}
		Let $Q\geq 4$. 	For all $u\in C_c^\infty(\rno \setminus \{o\})$, we have
		\begin{align*}
			\int_{\rno}\frac{|\mathcal{L}_{G} u|^2}{|\gradg \varrho|^2}\dx \dt= \int_{\rno}\frac{|\rlapg u|^2}{|\gradg \varrho|^2}\dx \dt	+\int_{\rno}\frac{|\sum_{j=1}^{n+1}L_j^2 u|^2}{|\gradg \varrho|^2}\dx \dt\\
			+\frac{Q(Q-4)}{2}\sum_{j=1}^{n+1}\int_{\rno}\frac{|L_j u|^2}{\varrho^2}\dx\dt+2\sum_{j=1}^{n+1}\int_{\rno}\varrho^{2-Q}\left|\frac{\partial}{\partial \varrho}\left(L_ju\varrho^{\frac{Q-2}{2}}\right)\right|^2\dx\dt.\end{align*}
	\end{theorem}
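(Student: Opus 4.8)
The plan is to start from the operator splitting $\lapg=\rlapg+\sum_{j=1}^{n+1}L_j^2$ recorded just before the statement (and established in Lemma~\ref{iden-lem}), where the radial part acts by $\rlapg u=|\gradg\varrho|^2\big(u_{\varrho\varrho}+\tfrac{Q-1}{\varrho}u_\varrho\big)$; this is the form forced by $\lapg\varrho=(Q-1)\varrho^{-1}|\gradg\varrho|^2$ and by the vanishing of $L_ju$ on radial $u$. Squaring the splitting and integrating against $|\gradg\varrho|^{-2}\dx\dt$ reproduces the first two terms on the right-hand side verbatim, so the theorem reduces to identifying the cross term
\[
\mathcal{C}:=2\int_{\rno}\frac{(\rlapg u)\big(\textstyle\sum_{j=1}^{n+1}L_j^2u\big)}{|\gradg\varrho|^2}\dx\dt
\]
with the two remaining terms. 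Substituting the explicit radial part cancels the weight and gives $\mathcal{C}=2\sum_j\int_{\rno}\phi\,L_j(L_ju)\dx\dt$ with $\phi:=u_{\varrho\varrho}+\tfrac{Q-1}{\varrho}u_\varrho$.

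The heart of the argument is one integration by parts in each $L_j$. Using $\partial_\varrho=\varrho^{-1}(x\cdot\nabla_x+2t\partial_t)$ and the expressions for $\partial\varrho/\partial x_j$ and $\partial\varrho/\partial t$ given above, one records the three inputs I will lean on, all of the flavour of Lemma~\ref{iden-lem}: first $L_j\varrho=0$ (the $L_j$ are tangential to the $\varrho$-spheres); second the Euclidean divergence $\operatorname{div}L_j=-(Q-1)\varrho^{-1}\mathcal{X}_j\varrho$, where $\mathcal{X}_j=X_j$ for $j\le n$ and $\mathcal{X}_{n+1}=Y$; and third the commutator $[L_j,\partial_\varrho]=\varrho^{-1}L_j$. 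Integration by parts then yields $\int\phi L_j(L_ju)=\int\big(-L_j\phi+(Q-1)\varrho^{-1}(\mathcal{X}_j\varrho)\phi\big)L_ju$, and upon summing in $j$ the correction collapses because $\sum_j(\mathcal{X}_j\varrho)L_ju=\gradg\varrho\cdot\gradg u-|\gradg\varrho|^2u_\varrho=0$. I expect this tangential cancellation to be the step most worth double-checking, though conceptually it is exactly the statement that each $L_j$ sees no radial direction; it leaves $\mathcal{C}=-2\sum_j\int(L_j\phi)(L_ju)$.

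Next I would compute $L_j\phi$. Writing $w_j:=L_ju$ and applying $[L_j,\partial_\varrho]=\varrho^{-1}L_j$ together with $L_j\varrho=0$ to $\phi=\partial_\varrho^2u+\tfrac{Q-1}{\varrho}\partial_\varrho u$, a short commutator calculation gives $L_j\phi=(w_j)_{\varrho\varrho}+\tfrac{Q+1}{\varrho}(w_j)_\varrho+\tfrac{Q-1}{\varrho^2}w_j$, so that $\mathcal{C}=-2\sum_j\int\big[(w_j)_{\varrho\varrho}+\tfrac{Q+1}{\varrho}(w_j)_\varrho+\tfrac{Q-1}{\varrho^2}w_j\big]w_j\dx\dt$. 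Passing to polar coordinates adapted to $\delta_\lambda$, in which $\dx\dt=\varrho^{Q-1}\dr\dsn$ over the Grushin unit sphere, every integral reduces to the two quantities $A_j:=\int(w_j)_\varrho^2\dx\dt$ and $B_j:=\int\varrho^{-2}w_j^2\dx\dt$; using the radial identity $\int\varrho^{-1}(w_j)_\varrho w_j\dx\dt=-\tfrac{Q-2}{2}B_j$ one finds, after collecting coefficients, $\mathcal{C}=\sum_j(2A_j-2B_j)$.

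It remains to check that the claimed remainder equals the same thing. Expanding $\varrho^{2-Q}\big|\partial_\varrho(w_j\varrho^{(Q-2)/2})\big|^2=(w_j)_\varrho^2+(Q-2)\varrho^{-1}(w_j)_\varrho w_j+\tfrac{(Q-2)^2}{4}\varrho^{-2}w_j^2$ and integrating, the middle term again collapses via the same radial identity, giving $2\sum_j\int\varrho^{2-Q}|\partial_\varrho(w_j\varrho^{(Q-2)/2})|^2\dx\dt=\sum_j\big(2A_j-\tfrac{(Q-2)^2}{2}B_j\big)$. Adding the Hardy piece $\tfrac{Q(Q-4)}{2}\sum_jB_j$ and using $\tfrac{Q(Q-4)}{2}-\tfrac{(Q-2)^2}{2}=-2$ returns exactly $\sum_j(2A_j-2B_j)=\mathcal{C}$, which closes the identity. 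Beyond the tangential cancellation above, the only genuine difficulty is bookkeeping: tracking the lower-order terms produced by $[L_j,\partial_\varrho]$ and the weight $\varrho^{Q-1}$ through the two rounds of integration by parts.
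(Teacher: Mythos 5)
Your proposal is correct and follows essentially the same route as the paper's proof: the splitting $\lapg=\rlapg+\sum_{j=1}^{n+1}L_j^2$, the integration by parts of Lemma~\ref{iden-lem}(4) with the correction terms annihilated by the cancellation identity \eqref{2} (your $\sum_j(\mathcal{X}_j\varrho)L_ju=0$ is exactly that identity), the commutator computation $L_j\phi=(L_ju)_{\varrho\varrho}+\tfrac{Q+1}{\varrho}(L_ju)_\varrho+\tfrac{Q-1}{\varrho^2}L_ju$ via \eqref{3} and Lemma~\ref{iden-lem}(5), and the radial integrations by parts all coincide with the paper's steps, yielding the same cross term $\sum_j(2A_j-2B_j)$. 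The only cosmetic difference is the final step: the paper invokes the radial Hardy identity of Corollary~\ref{wg-hardy} with $\alpha=0$ applied to $|\gradg\varrho|^{-1}L_ju$, whereas you verify the same completion-of-the-square identity by direct expansion, which amounts to an identical computation.
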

	
	\medskip
	
	\begin{remark}
		{\rm It is noteworthy that classical Hardy-Rellich and Rellich identities for functions (not necessarily radial) are provided by Theorem~\ref{rellichspherical}. We may get the required identities with the exact remainder term and sharp constant by using Theorem~\ref{r-rell} on $\rlapg$. One might compare the aforementioned theorem with \cite[Theorem~1.2]{mow}. }		
		
	\end{remark}	
	
	\begin{remark}
		{\rm
			Later in Theorem~\ref{rellichprojection}, we compute the deficit, namely the $L^2$-norm of the Grushin operator and it's radial components with spherical harmonics. In fact, in Theorem~\ref{rellichspherical}, this deficit is computed in terms of the $L^2$- norm of the spherical derivatives and with exact remainder terms. Later in Theorem~\ref{rellichprojection}, we shall compute this deficit in terms of spherical harmonics and using subsequent lemmas we shall quantitatively connect spherical derivatives with spherical harmonics.  We postpone this discussion for a while and we refer to Section~\ref{sec-sph-deri} for detailed statements and proofs. 
		}
	\end{remark}
	
	We end this section with the outline of the article below. 
	
	\medskip
	
	{\bf The organisation of the article is as follows: } 
	
	\begin{itemize}
		
		\item[Section 1:] The introduction section contains a brief background on the Hardy, Hardy-Rellich and Rellich inequality for the Grushin operator, the main results of this article, and the approach we follow in the article. 
		
		\item[Section 2:] Contains the polar coordinates, spherical harmonics and spectral properties of the Grushin operator which were developed by Garofalo-Shen \cite{GS}. Moreover, the necessary preliminaries of the Bessel pair are also discussed. 
		
		\item[Section 3:] Contains the proof of several Hardy identities via Bessel pairs and some straightforward implications of the Hardy identities. 
		
		\item[Section 4:] This section is devoted to the proof of the abstract Hardy-Rellich identity. The proof is divided into several steps and as a consequence, Hardy-Rellich and Rellich inequalities with exact remainder terms are deduced. 
		
		\item[Section 5:] This section begins with the definition of spherical vector fields and factorizes the Grushin operator into the radial and spherical parts through a simple identity. Then the Rellich-type identity has been derived by isolating the radial contribution and the spherical contribution of the $L^2$ norm of the Grushin operator $\lapg$ (see Theorem~\ref{rellichspherical}). Furthermore, the spherical derivatives have been computed via spherical harmonics. 
		
		\item[Section 6:] This section deals with the symmetrization principle on the Grushin space. The main theme here is to prove analogous P\'olya-Szeg\"o inequality for the second-order operator. The Hardy-Rellich-type identity is crucially used to prove Theorem~\ref{symmrellich}.
		
		\item[Section 7:] This section deals with applications of our theorems. The second-order uncertainty principle (USP), in particular, Heisenberg and Hydrogen USP has been derived.  Further employing certain scaling arguments second-order Caffarelli-Kohn-Nirenberg inequalities with sharp constants have been derived from USP. 
		
	\end{itemize}

	\section{Preliminaries and functional analysis set-up}\label{sec-prem}
	
	This section is devoted to the introduction of the Grushin space, in particular, the setting up of the polar coordinates, spherical harmonics and spectral properties. In the end, we shall also define the Bessel pair and the corresponding ordinary differential equations. 
	
	\medskip	
	
	{\bf Polar coordinate.} First, let us introduce the polar coordinate structure of this space. We can write $(x,t)=(\varrho,\phi,\theta_1,\theta_2,\ldots,\theta_{n-1})$ as below	
	\begin{align*}
		&x_1=\varrho \,\sin^{1/2}\phi\, \sin\theta_1\ldots\sin \theta_{n-1},\\
		&x_2=\varrho \,\sin^{1/2}\phi\, \sin\theta_1\ldots\cos \theta_{n-1},\\
		&\cdots\\
		&x_n=\varrho \,\sin^{1/2}\phi\, \cos\theta_1,\\
		&t=\frac{\varrho^2}{2}\cos \phi,
	\end{align*}
	where
	$0<\phi<\pi$, $0<\theta_i<\pi$ for $i=1,2,\ldots,n-2$ and $0<\theta_{n-1}<2\pi$. Here we are excluding the endpoints as those are measure zero sets. Now denoting $r=|x|$,  we have
	\begin{align*}
		r=|x|=\sqrt{x_1^2+\cdots+x_n^2}=\varrho\sin^{1/2}\phi \implies \sin\phi = \frac{|x|^2}{\varrho^2(x,t)}.
	\end{align*}
	We define $\psi(x,t)=|\gradg \varrho(x,t)|^2$.
	Now this will give 
	\begin{align*}
		|\gradg \varrho(x,t)|^2=|\nabla_x\varrho|^2+|x|^2(\partial_t \varrho)^2=\frac{|x|^2}{\varrho^2(x,t)}=\psi(x,t)=\sin\phi = \frac{r^2}{\varrho^2},
	\end{align*}	
	and 
	\begin{align*}
		\lapg\varrho (x,t) = (Q-1)\frac{|x|^2}{\varrho^3(x,t)}=(Q-1)\frac{\psi(x,t)}{\varrho(x,t)}.
	\end{align*}
	
	Using polar coordinates, we obtain
	\begin{align*}
		\dx=r^{n-1}{\rm d}r\,{\rm d}w, \text{ and }
		\dx\dt=\frac{1}{2}\varrho^{n+1}(\sin\phi)^{\frac{n-2}{2}}\dr\dph\dw,	
	\end{align*}
	where $\dw$ is the spherical measure on the standard round sphere $\mathbb{S}^{n-1}$ in the Euclidean space $\mathbb{R}^n$. Now consider the $\varrho$-sphere on the Grushin space as follows
	\begin{align*}
		\Omega=\biggl\{(x,t)\in \rno : \varrho(x,t)=1\biggr\}.
	\end{align*}
	Moreover, using the standard argument there holds the following polar integration. Let $u$ be some integrable function, and then we have
	\begin{align*}
		\int_{\rno}u(x,t) \dx \dt&=\int_{\Omega}\int_{0}^{\infty}u(\varrho,\phi,w)\frac{1}{2}\varrho^{n+1}(\sin\phi)^{\frac{n-2}{2}}\dr\dph\dw\\&=\int_{\Omega}\int_{0}^{\infty}u(\varrho,\sigma)\frac{\varrho^{n+1}}{2\psi}\dr\dsn,
	\end{align*}
	where $${\rm d}\Omega=(\sin\phi)^{\frac{n}{2}}\dph\dw, \text{ and } \sigma=(\phi,w), \text{ with }w\in \mathbb{S}^{n-1}.$$
	
	%%%%%%%%%%%%%%%%
	
	\medskip	
	
	{\bf Operators and its radial part.} Now we will write all the operators in terms of polar coordinate structure. The Grushin gradient in polar coordinates may be defined as follows
	\begin{align*}
		\gradg=\psi^{1/2}\bigg(\partial_\varrho, \frac{2}{\varrho}\partial_\phi,\frac{1}{\varrho\sin\phi}\nabla_w\bigg),
	\end{align*}
	where $(x,t)=(\varrho,\phi,w)$, $w\in \mathbb{S}^{n-1}$, $\nabla_w$ is the gradient on $\mathbb{S}^{n-1}$. Also, it can be written as follows
	\begin{align*}
		\gradg=\psi^{1/2}\bigg(\partial_\varrho, \frac{2}{\varrho}\nabla_\sigma\bigg),
	\end{align*}
	where $\nabla_\sigma$ is the gradient operator on the Grushin $\varrho$-sphere. We refer to \cite[Page. 11]{flynn} for more details.	
	In polar coordinates, the Grushin operator takes the form 
	\begin{align}\label{grushin-polar}
		\lapg=\psi\biggl\{\frac{\partial^2}{\partial \varrho^2}+\frac{Q-1}{\varrho}\frac{\partial}{\partial \varrho}+\frac{4}{\varrho^2}\mathcal{L}_\sigma\biggr\},
	\end{align}
	where $\sigma=(\phi,w)$ and $\mathcal{L}_\sigma$ is the Laplacian on Grushin $\varrho$-sphere.
	
	To this end, we shall define the radial contribution of the operators. The \it radial Grushin gradient and the radial Grushin operator \rm are defined as
	\begin{align*}
		\nabla_{\varrho,G}=\psi^{1/2}\big(\partial_\varrho,0,0\big), \quad \text{ and } \quad \mathcal{L}_{\varrho,G}=\psi\biggl\{\frac{\partial^2}{\partial \varrho^2}+\frac{Q-1}{\varrho}\frac{\partial}{\partial \varrho}\biggr\}.
	\end{align*}
	
	{\bf Spherical harmonics.} Note that, for $k=0,1,\ldots,$ we form the function $u_k(\varrho, \sigma):=\varrho^k g(\sigma)$. Then, $u_k(\varrho, \sigma)$ is a solution of $\mathcal{L}_{G}u_k = 0$ if and only if 
	\begin{align}\label{eigen-sphere}
		\mathcal{L}_\sigma g=-\frac{k(n+k)}{4}g.
	\end{align}
	Before going further, let us mention some key observations of spherical harmonics in Grushin space. Let us recall some known facts  :
	
	\begin{lemma}\cite[Lemma 2.6]{GS}
		Let $k$ be a nonnegative integer and $l \equiv k(\mbox{mod}\ 2),$ with $0 \leq l \leq k.$ Suppose that $Y_l$ is a spherical harmonic of degree $l$ on $\mathbb{S}^{n-1}$. Then 
		$$
		g_{l,k}(\phi, w) : = g_{l,k}(\sigma) = \sin^{l/2}(\phi) C_{\frac{k-l}{2}}^{\frac{l}{2} + \frac{n}{4}}(\cos \phi) Y_{l}(w) 
		$$
		satisfies \eqref{eigen-sphere}, where $C_{\frac{k-l}{2}}^{\frac{l}{2} + \frac{n}{4}}$ is a Gegenbauer polynomial. 
	\end{lemma}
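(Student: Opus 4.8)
The plan is to reduce the claimed eigenvalue equation \eqref{eigen-sphere} to the assertion that the solid function $u_k(\varrho,\sigma):=\varrho^{k}g_{l,k}(\sigma)$ is $\lapg$-harmonic, and then to recognize the surviving ordinary differential equation in $\phi$ as the Gegenbauer equation. First I would record the elementary consequence of the polar form \eqref{grushin-polar}: for any $g=g(\sigma)$,
\begin{align*}
\lapg\big(\varrho^{k}g\big)=\psi\,\varrho^{k-2}\Big[k(k+Q-2)\,g+4\,\mathcal{L}_\sigma g\Big]=\psi\,\varrho^{k-2}\Big[k(n+k)\,g+4\,\mathcal{L}_\sigma g\Big],
\end{align*}
where I used $Q=n+2$. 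Hence \eqref{eigen-sphere} holds for $g=g_{l,k}$ if and only if $\lapg u_k=0$, so it suffices to establish this harmonicity.

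Next I would pass to a Cartesian form that removes the $w$-variable. Since $\sin\phi=r^{2}/\varrho^{2}$ and $\cos\phi=2t/\varrho^{2}$ with $r=|x|$, the prefactor $\sin^{l/2}\phi=(r/\varrho)^{l}$ yields
\begin{align*}
u_k=\varrho^{\,k-l}\,C_{\frac{k-l}{2}}^{\frac{l}{2}+\frac{n}{4}}\!\Big(\tfrac{2t}{\varrho^{2}}\Big)\,\big(r^{l}Y_l(w)\big).
\end{align*}
The function $P_l(x):=r^{l}Y_l(w)$ is a solid Euclidean spherical harmonic: it is homogeneous of degree $l$ and satisfies $\Delta_x P_l=0$. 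Writing $u_k=v\,P_l$ with $v=v(r,t):=\varrho^{\,k-l}C_{(k-l)/2}^{l/2+n/4}(2t/\varrho^{2})$ radial in $x$, the product rule applied to $\lapg=\Delta_x+|x|^{2}\partial_t^{2}$, combined with $\Delta_x P_l=0$ and Euler's identity $x\cdot\nabla_x P_l=l\,P_l$, collapses the cross term and gives $\lapg u_k=P_l\big[v_{rr}+\tfrac{n-1+2l}{r}v_r+r^{2}v_{tt}\big]$. Thus $u_k$ is $\lapg$-harmonic precisely when $v$ solves the effective radial Grushin equation $v_{rr}+\tfrac{n-1+2l}{r}v_r+r^{2}v_{tt}=0$, which is nothing but the Grushin equation in the raised dimension $n+2l$.

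Finally I would solve this reduced equation. I would change variables to $\tau:=2t/\varrho^{2}=\cos\phi$ (so that $1-\tau^{2}=r^{4}/\varrho^{4}$), peel off the homogeneity factor $\varrho^{\,k-l}$, and show that the remaining factor $C(\tau)$ must satisfy
\begin{align*}
(1-\tau^{2})\,C''(\tau)-(2\lambda+1)\,\tau\,C'(\tau)+m(m+2\lambda)\,C(\tau)=0,\qquad \lambda=\tfrac{l}{2}+\tfrac{n}{4},\ \ m=\tfrac{k-l}{2},
\end{align*}
which is exactly the Gegenbauer (ultraspherical) differential equation solved by $C_{m}^{\lambda}$. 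Here the hypotheses $l\equiv k\ (\mathrm{mod}\ 2)$ and $0\le l\le k$ are used precisely to guarantee that $m=(k-l)/2$ is a nonnegative integer, so that $C_m^{\lambda}$ is a genuine polynomial and $u_k$ extends smoothly across the coordinate singularities $\phi\in\{0,\pi\}$. A short bookkeeping check, $m(m+2\lambda)=\tfrac{(k-l)(k+l+n)}{4}=\tfrac{(k-l)\big((k-l)+(n+2l)\big)}{4}$, confirms that this is the correct harmonicity eigenvalue for the raised dimension $n+2l$, whence $\lapg u_k=0$ and \eqref{eigen-sphere} follows.

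The main obstacle is the last step: carrying out the change of variables to the Gegenbauer equation cleanly, since the derivatives $\varrho_r,\varrho_t,\tau_r,\tau_t$ generate several terms that must be combined, and verifying that the index $m$, the weight $\lambda$, and the coefficient $m(m+2\lambda)$ line up exactly with the degree and order of $C_{(k-l)/2}^{l/2+n/4}$. The factorization $u_k=v\,P_l$ is what renders this tractable: it eliminates the $w$-variable and reduces the entire problem to a one-variable ODE, so the only genuinely delicate accounting is the constant matching together with the selection, via regularity at the poles $\phi\in\{0,\pi\}$, of the polynomial rather than the second (singular) solution of the Gegenbauer equation.
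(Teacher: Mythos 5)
Your proposal is correct, but note that the paper does not prove this lemma at all: it is imported verbatim from Garofalo--Shen \cite[Lemma 2.6]{GS}, and the only related content in the paper is the remark immediately preceding it, namely that $u_k=\varrho^k g(\sigma)$ satisfies $\lapg u_k=0$ if and only if \eqref{eigen-sphere} holds --- which is exactly your first step (and your constant $k(k+Q-2)=k(n+k)$ for $Q=n+2$ is right). So there is no in-paper argument to compare against; what you have written is a genuine, self-contained proof. Your route --- factor out the solid Euclidean harmonic $P_l=r^lY_l$, use $\Delta_x P_l=0$ and Euler's identity to absorb the cross term into a drift, and land on the radial Grushin operator in the raised dimension $N=n+2l$ --- is a clean dimension-shift argument, and all the bookkeeping checks out: writing $v=\varrho^{d}C(\tau)$ with $d=k-l$, $\tau=2t/\varrho^2=\cos\phi$, the polar form \eqref{grushin-polar} (with $n$ replaced by $N$) together with the computation $\mathcal{L}_\sigma C(\tau)=(1-\tau^2)C''-\tfrac{N+2}{2}\tau C'$ on functions of $\phi$ alone yields
\begin{align*}
v_{rr}+\tfrac{N-1}{r}v_r+r^2v_{tt}=\psi\,\varrho^{d-2}\Bigl[d(d+N)C+4\bigl((1-\tau^2)C''-\tfrac{N+2}{2}\tau C'\bigr)\Bigr],
\end{align*}
so harmonicity is the Gegenbauer equation with $2\lambda+1=\tfrac{N+2}{2}$, i.e.\ $\lambda=\tfrac{N}{4}=\tfrac{l}{2}+\tfrac{n}{4}$, and eigencoefficient $m(m+2\lambda)=\tfrac{d}{2}\cdot\tfrac{d+N}{2}=\tfrac{(k-l)(k+l+n)}{4}=\tfrac{d(d+N)}{4}$, exactly as you state. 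Two minor remarks: the lemma only asserts that the \emph{given} $g_{l,k}$ satisfies \eqref{eigen-sphere}, so your closing discussion of selecting the polynomial solution via regularity at $\phi\in\{0,\pi\}$ is not needed for the proof itself (though the parity condition $l\equiv k\ (\mathrm{mod}\ 2)$ is indeed what makes $\varrho^{k}g_{l,k}$ extend to a polynomial on $\rno$, which the paper implicitly uses later when integrating by parts in Lemma~\ref{int-by-parts}); and your ``main obstacle'' is genuinely only bookkeeping --- the displayed identity above shows the change of variables closes in one line once the spherical part of \eqref{grushin-polar} is expressed in $\tau$.
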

	
	Now we define, 
	$$
	\mathcal{H}_k := \, \mbox{Span} \left\{ \sin^{l/2}(\phi) C_{\frac{k-l}{2}}^{\frac{l}{2} + \frac{n}{4}}(\cos \phi) Y_{l, j}(w) :
	\ j = 1, 2, \ldots, d_l, \quad 0 \leq l \leq k, \quad l \equiv k(\mbox{mod}\ 2) \right\},
	$$	
	where $d_l = \frac{(n+2l -2)\Gamma(n + l-2)}{\Gamma(l+1) \Gamma(n-1)}$ and $\{Y_{l,j}\}_{j=1,2,\ldots,d_l}$ is an orthonormal basis for the space of spherical harmonics of degree $l$ on $\mathbb{S}^{n-1}$.

	\begin{lemma}\cite[Lemma 2.11]{GS}
		The following direct sum decomposition holds : 
		\begin{align*}
			L^2(\Omega, {\rm d}\Omega)=\bigoplus_{k=0}^\infty\mathcal{H}_k,
		\end{align*}
		where the spaces $\mathcal{H}_k$ are mutually orthonormal in $L^2 (\Omega, {\rm d}\Omega).$
	\end{lemma}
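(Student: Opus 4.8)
The plan is to reduce the decomposition to two classical completeness theorems — for spherical harmonics on $\mathbb{S}^{n-1}$ and for Gegenbauer polynomials — by exploiting the product structure of the measure on the Grushin sphere. Writing $\sigma=(\phi,w)$ with $\phi\in(0,\pi)$ and $w\in\mathbb{S}^{n-1}$, the measure ${\rm d}\Omega=(\sin\phi)^{n/2}\dph\dw$ separates, so the underlying Hilbert space factors as a tensor product
\[
L^2(\Omega,{\rm d}\Omega)=L^2\big((0,\pi),(\sin\phi)^{n/2}\dph\big)\otimes L^2(\mathbb{S}^{n-1},\dw).
\]
On the spherical factor I would invoke the classical orthogonal decomposition $L^2(\mathbb{S}^{n-1},\dw)=\bigoplus_{l\geq 0}\mathcal{Y}_l$ into the spaces $\mathcal{Y}_l$ of spherical harmonics of degree $l$, where $\{Y_{l,j}\}_{j=1}^{d_l}$ is the orthonormal basis already used to define $\mathcal{H}_k$.

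The heart of the argument is the polar variable. For each fixed $l$, set $\lambda=\tfrac{l}{2}+\tfrac{n}{4}$; I claim that $\{\sin^{l/2}(\phi)\,C^{\lambda}_{m}(\cos\phi)\}_{m\geq 0}$ is a complete orthogonal system in $L^2\big((0,\pi),(\sin\phi)^{n/2}\dph\big)$. The key computation is the change of variables $x=\cos\phi$: the inner product of two such functions becomes
\[
\int_{-1}^{1}C^{\lambda}_{m}(x)\,C^{\lambda}_{m'}(x)\,(1-x^2)^{\lambda-\frac12}\,{\rm d}x,
\]
which is precisely the Gegenbauer orthogonality relation, so the system is orthogonal and, since $\lambda>0$, has positive norms. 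For completeness I would take $h$ orthogonal to every member of the family; under $x=\cos\phi$ this says that $\tilde{H}(x)=(1-x^2)^{-l/4}h(\arccos x)$ is annihilated against all $C^{\lambda}_m$ in $L^2\big([-1,1],(1-x^2)^{\lambda-1/2}{\rm d}x\big)$. A short bookkeeping of the weights shows that the norm of $\tilde{H}$ in that space equals the norm of $h$ in $L^2\big((0,\pi),(\sin\phi)^{n/2}\dph\big)$, so $\tilde{H}$ genuinely lies in the Gegenbauer Hilbert space; density of polynomials then forces $\tilde{H}=0$, hence $h=0$.

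Combining the two complete orthogonal systems across the tensor product, the functions $g_{l,k,j}(\sigma)=\sin^{l/2}(\phi)\,C^{\lambda}_{(k-l)/2}(\cos\phi)\,Y_{l,j}(w)$, indexed by $l\geq 0$, $m=(k-l)/2\geq 0$ and $1\leq j\leq d_l$, form a complete orthogonal system in $L^2(\Omega,{\rm d}\Omega)$. Regrouping them by the value $k=l+2m$ recovers exactly the spaces $\mathcal{H}_k$, and since each basis function lies in precisely one $\mathcal{H}_k$ this yields $L^2(\Omega,{\rm d}\Omega)=\bigoplus_{k\geq 0}\mathcal{H}_k$. Mutual orthogonality of $\mathcal{H}_k$ and $\mathcal{H}_{k'}$ for $k\neq k'$ is immediate from the same two relations: if the spherical degrees differ the $w$-integral vanishes, while if they coincide then necessarily $m\neq m'$ and the $\phi$-integral vanishes by Gegenbauer orthogonality. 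Alternatively, since every element of $\mathcal{H}_k$ solves $\mathcal{L}_\sigma g=-\tfrac{k(n+k)}{4}g$ by \eqref{eigen-sphere} and the eigenvalue is strictly monotone in $k$, orthogonality across distinct $k$ follows from the symmetry of $\mathcal{L}_\sigma$.

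I expect the main obstacle to be the completeness step rather than the orthogonality: one must justify density of polynomials in the weighted $L^2$ space and, crucially, track the weights through the substitution $x=\cos\phi$ so that the transformed function really belongs to the Gegenbauer Hilbert space with the same norm. Once this bookkeeping is secured, the regrouping by $k$ and the verification of mutual orthogonality of the $\mathcal{H}_k$ are formal.
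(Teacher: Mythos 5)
Your proof is correct, and since the paper does not prove this lemma itself --- it is quoted verbatim from Garofalo--Shen \cite[Lemma 2.11]{GS} --- the relevant comparison is with the cited source, whose argument is essentially the one you give: separation of variables for the product measure ${\rm d}\Omega=(\sin\phi)^{n/2}\dph\dw$, the classical decomposition of $L^2(\mathbb{S}^{n-1})$ into spherical harmonics, and completeness of the Gegenbauer system $\{C^{\lambda}_m\}_{m\geq 0}$ with $\lambda=\tfrac{l}{2}+\tfrac{n}{4}$ after the substitution $x=\cos\phi$, whose weight bookkeeping $(\sin\phi)^{l+n/2}\dph=(1-x^2)^{\lambda-\frac12}\,{\rm d}x$ you carry out correctly. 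The one point worth stating slightly more carefully is the ``tensor product'' step: since your $\phi$-system depends on $l$, one should first decompose $L^2(\Omega,{\rm d}\Omega)=\bigoplus_{l,j}L^2\big((0,\pi),(\sin\phi)^{n/2}\dph\big)\otimes\mathrm{span}\{Y_{l,j}\}$ and then apply the $l$-dependent complete system in each summand, which is exactly what your indexing by $(l,m,j)$ implements.
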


	Given the above lemmas, we have, $u(x, t)=u(\varrho,\sigma)\in C_{c}^\infty(\rno\setminus\{o\})$, $\varrho\in({0},\infty)$ and $\sigma\in \Omega$, thus we can write
	\begin{equation*}
		u(\varrho,\sigma)=\sum_{k=0}^{\infty}d_{k}(\varrho)\Phi_k(\sigma)
	\end{equation*}
	in $L^2(\rno)$, where $\{ \Phi_k \}$ is an orthonormal system of spherical harmonics in $L^2(\Omega)$ and 
	\begin{equation*}
		d_{k}(\varrho)=\int_{\Omega}u(\varrho,\sigma)\Phi_k(\sigma) \ {\rm d}\Omega\,.
	\end{equation*} 
	A spherical harmonic $\{ \Phi_k \}$ of order $k$ satisfies $$-\mathcal{L}_{\sigma}\Phi_k=\lambda_k\Phi_k,$$
	for all $k\in\mathbb{N}\cup\{0\}$, where $\lambda_k=\frac{k(k+n)}{4}$. In this context, let us write the gradient and Grushin operators as follows :
	\begin{align*}
		|\gradg u|^2 =  \psi  \bigg(\left(\partial_{\varrho} u\right)^2  + \frac{4}{\varrho^2}  |\nabla_{\sigma}u|^2\bigg),
	\end{align*}
	
	and
	\begin{align*}
		(\lapg u)^2 & = \psi^2 \left( \partial^2_{\varrho}u + \frac{(Q-1)}{\varrho} \partial_{\varrho}u \right)^2  + \psi^2
		\frac{16}{\varrho^4} (\mathcal{L}_{\sigma} u)^2 \\
		& + 2\psi^2 \left( \partial^2_{\varrho}u + \frac{(Q-1)}{\varrho} \partial_{\varrho}u \right) \frac{4}{\varrho^2} (\mathcal{L}_{\sigma} u) u.\notag
	\end{align*}
	
	Along with this, the radial contribution of the operators will look as follows :
	\begin{align*}
		|\rgradg u|^2 =  \psi \left(\partial_{\varrho} u\right)^2	,
	\end{align*}
	and
	\begin{align*}
		(\rlapg u)^2 & = \psi^2 \left( \partial^2_{\varrho}u + \frac{(Q-1)}{\varrho} \partial_{\varrho}u \right)^2\,.
	\end{align*}
	
	\medspace
	
	We can deduce the following integration by parts formula :			
	\begin{lemma}\label{int-by-parts}
		Let $\Phi_k \in \mathcal{H}_k$ and $\Phi_j \in \mathcal{H}_j,$ then there holds  
		$$\int_{\Omega} \nabla_\sigma \Phi_k(\sigma) \nabla_\sigma \Phi_j(\sigma) \dsn =-\int_{\Omega}\mathcal{L}_{\sigma}\Phi_k(\sigma) \Phi_j(\sigma)\dsn,$$ 
		for all $k, \,j \in \mathbb{N}.$
	\end{lemma}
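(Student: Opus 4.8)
The plan is to recognize the asserted identity as Green's first identity on the Grushin sphere $\Omega$ and to prove it by integration by parts, the entire content being the vanishing of the contributions at the degenerate poles. Writing $\sigma=(\phi,w)$ with $\phi\in(0,\pi)$ and $w\in\mathbb{S}^{n-1}$, matching $\gradg=\psi^{1/2}\left(\partial_\varrho,\frac{2}{\varrho}\nabla_\sigma\right)$ with the explicit polar gradient gives $\nabla_\sigma u=\left(\partial_\phi u,\ \frac{1}{2\sin\phi}\nabla_w u\right)$, and a short computation from \eqref{grushin-polar} identifies $\mathcal{L}_\sigma u=(\sin\phi)^{-n/2}\partial_\phi\left((\sin\phi)^{n/2}\partial_\phi u\right)+\frac{1}{4\sin^2\phi}\Delta_w u$, where $\Delta_w$ is the Laplace--Beltrami operator on $\mathbb{S}^{n-1}$; together with $\dsn=(\sin\phi)^{n/2}\dph\dw$ these make $\mathcal{L}_\sigma$ exactly the divergence of $\nabla_\sigma$ against $\dsn$. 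The identity is then formally immediate, and only the absence of a surviving boundary term requires justification.

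I would carry out the two integrations by parts separately. Against the measure $\dsn$ the angular part contributes $\int_\Omega\frac{1}{4\sin^2\phi}(\Delta_w\Phi_k)\Phi_j\dsn=-\int_\Omega\frac{1}{4\sin^2\phi}\nabla_w\Phi_k\cdot\nabla_w\Phi_j\dsn$, with no boundary term since $\mathbb{S}^{n-1}$ is closed. For the $\phi$-part, integrating by parts on $(0,\pi)$ gives $\int_0^\pi\partial_\phi\left((\sin\phi)^{n/2}\partial_\phi\Phi_k\right)\Phi_j\dph=-\int_0^\pi(\sin\phi)^{n/2}\partial_\phi\Phi_k\,\partial_\phi\Phi_j\dph+\Big[(\sin\phi)^{n/2}\partial_\phi\Phi_k\,\Phi_j\Big]_{\phi\to0}^{\phi\to\pi}$. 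Summing the two non-boundary pieces reproduces exactly $-\int_\Omega\nabla_\sigma\Phi_k\cdot\nabla_\sigma\Phi_j\dsn$, so the lemma reduces to showing that the bracketed polar term vanishes.

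The hard (and only genuinely delicate) part is this polar boundary term, because $\Omega$ degenerates precisely at $\phi=0,\pi$, which correspond to the characteristic points $|x|=0$ of the Grushin sphere. By bilinearity it suffices to test on the basis functions $g_{l,k}(\phi,w)=\sin^{l/2}(\phi)\,C_{(k-l)/2}^{\,l/2+n/4}(\cos\phi)\,Y_l(w)$. As $\phi\to0$ one has $g_{l',k'}=O\!\left(\phi^{l'/2}\right)$ and $\partial_\phi g_{l,k}=O\!\left(\phi^{l/2-1}\right)$ (with an additional vanishing factor in the case $l=0$, where $\partial_\phi g_{0,k}=O(\phi)$), so that $(\sin\phi)^{n/2}\partial_\phi g_{l,k}\,g_{l',k'}=O\!\left(\phi^{(n+l+l')/2-1}\right)\to0$ for $n\geq2$, and symmetrically at $\phi=\pi$. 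Hence the bracketed term vanishes, the integration by parts closes without boundary contribution, and the stated identity follows; the passage from the basis to arbitrary $\Phi_k\in\mathcal{H}_k$, $\Phi_j\in\mathcal{H}_j$ is then immediate by linearity.
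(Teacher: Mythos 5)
Your proof is correct, and it takes a genuinely different route from the paper's. You work intrinsically on the Grushin sphere: you read off $\nabla_\sigma=\left(\partial_\phi,\tfrac{1}{2\sin\phi}\nabla_w\right)$ from the two displayed forms of $\gradg$, identify $\mathcal{L}_\sigma$ as the divergence-form operator for the measure $(\sin\phi)^{n/2}{\rm d}\phi\,{\rm d}w$, integrate by parts in $w$ and in $\phi$ separately, and kill the polar boundary terms at $\phi=0,\pi$ via the asymptotics of the Gegenbauer-type basis $g_{l,k}$. The paper avoids the degenerate sphere entirely: it multiplies $\Phi_k,\Phi_j$ by a radial cutoff $f\in C_c^\infty(0,\infty)$, observes that $u_k=f(\varrho)\Phi_k$ is smooth on $\rno\setminus\{o\}$ (since $\sin^{l/2}\phi\,Y_l(w)=Y_l(x/\varrho)$ and the Gegenbauer factor is a smooth function of $2t/\varrho^2$), performs the global integration by parts $\int(\gradg u_k,\gradg u_j)=-\int\lapg(u_k)\,u_j$ on $\rno$, splits both sides into radial and spherical pieces in polar coordinates, and equates for all $f$; the smoothness of the lifted eigenfunctions thus absorbs, in one stroke, exactly the pole analysis you carry out by hand. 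What your version buys in exchange is an explicit formula for $\mathcal{L}_\sigma$ (which the paper never writes down) and a precise localization of the only possible obstruction, namely the characteristic set $|x|=0$ of the Grushin sphere; what the paper's version buys is freedom from any coordinate computation or boundary estimate. Two points you should tighten: (i) the formula $\mathcal{L}_\sigma=(\sin\phi)^{-n/2}\partial_\phi\bigl((\sin\phi)^{n/2}\partial_\phi\cdot\bigr)+\tfrac{1}{4\sin^2\phi}\Delta_w$ is asserted rather than derived; it is correct --- one can verify that it reproduces $\mathcal{L}_\sigma g_{l,k}=-\tfrac{k(k+n)}{4}g_{l,k}$ via the Gegenbauer differential equation, matching \eqref{eigen-sphere} --- but since the paper never displays $\mathcal{L}_\sigma$ explicitly, the ``short computation'' should actually be performed; (ii) your displayed bound $O\bigl(\phi^{(n+l+l')/2-1}\bigr)$ does \emph{not} tend to zero in the edge case $n=2$, $l=l'=0$, where the exponent is $0$; your parenthetical remark $\partial_\phi g_{0,k}=O(\phi)$ rescues precisely this case, so the final estimate should be stated with the $l=0$ and $l\geq 1$ cases separated rather than as a single uniform exponent.
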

	\begin{proof}
		Let $f\in C^{\infty}_c(0,\infty)$ and for $k\in\mathbb{N}$, we define
		\begin{align*}
			u_k(x,t):=u_k(\varrho,\sigma)=f(\varrho)\Phi_k(\sigma) \quad \text{ for all } (x,t)\in\rno.
		\end{align*}
		Now, let us describe the explicitly known $\Phi_k(\sigma)$. Let $l\equiv m(\text{mod}\, 2)$. Then we know 
		\begin{align*}
			g_{l,m}(\phi,\omega)=(\sin\phi)^{l/2}C^{\frac{l}{2}+\frac{n}{4}}_{\frac{m-l}{2}}(\cos\phi)Y_{l}(\omega),
		\end{align*} 
		where $Y_{l}$ is an $l$ degree spherical harmonic on $\mathbb{S}^{n-1}$ w.r.t. the standard Laplacian on $\mathbb{R}^n$. In the spherical polar coordinates on the Grushin space, we have $|x|=\varrho (\sin\phi)^{1/2}$. Therefore, we can write
		\begin{align*}
			(\sin\phi)^{l/2}Y_{l}(\omega)=Y_{l}\left(\frac{x}{\varrho}\right).
		\end{align*}
		
		Note that when $\varrho$ lies in a compact subset of $(0,\infty)$, then
		\begin{align*}
			(x,t)\longmapsto \varrho(x,t)^{\gamma}, \quad \text{ for } \gamma\in \mathbb{R},
		\end{align*}
		is a smooth function of $(x,t)$. Similarly,
		\begin{align*}
			(x,t)\longmapsto C^{\frac{l}{2}+\frac{n}{4}}_{\frac{m-l}{2}}\left(\frac{2t}{\varrho(x,t)^2}\right)   \quad \text{ and } \quad (x,t)\longmapsto Y_{l}\left(\frac{x}{\varrho}\right)
		\end{align*}
		are smooth functions of $(x,t)$. As ${\Phi_{k}}'s$ are of the form $g_{l,m}$, for some $m\geq 0$ and $l\equiv m(\text{mod}\,2)$. Therefore, we can apply integration by parts in $\rno$ to $u_k$ and $u_j$.
		
		Set
		\begin{align*}
			I=\int_{\mathbb{R}^{n+1}}(\gradg u_k,\gradg u_j)\dx \dt.
		\end{align*}
		In polar coordinates, the above integral can be split into a sum of two terms involving gradient w.r.t. radial and spherical coordinates as follows
		\begin{multline*}
			I=\int_0^{\infty}\int_{\Omega}f'(\varrho)^2\Phi_k(\sigma)\Phi_j(\sigma) \varrho^{Q-1}\dsn\dr\\+\int_0^{\infty}\int_{\Omega}f(\varrho)^2 4\varrho^{-2} \nabla_\sigma \Phi_k(\sigma) \nabla_\sigma \Phi_j(\sigma) \varrho^{Q-1}\dsn\dr.
		\end{multline*}
		On integration by parts by treating $I$ as an integral on $\rno$, we get that 
		\begin{align*}
			I=-\int_{\rno}\lapg (u_k)u_j \dx\dt.
		\end{align*}
		Again, splitting the radial and angular part of $\lapg$, the above expression can be written as 
		\begin{multline*}
			I=-\int_0^{\infty}\int_{\Omega}\rlapg f(\varrho)f(\varrho)\Phi_k(\sigma) \Phi_j(\sigma)\varrho^{Q-1}\dsn\dr\\ -\int_0^{\infty}\int_{\Omega}f(\varrho)^2 4\varrho^{-2}\mathcal{L}_{\sigma}\Phi_k(\sigma) \Phi_j(\sigma)\varrho^{Q-1}\dsn\dr.
		\end{multline*}
		On equating the above two expressions for all $f\in C^{\infty}_c(0,\infty)$ we get that $$\int_{\Omega} \nabla_\sigma \Phi_k(\sigma) \nabla_\sigma \Phi_j(\sigma) \dsn =-\int_{\Omega}\mathcal{L}_{\sigma}\Phi_k(\sigma) \Phi_j(\sigma)\dsn$$ for all $k,j\in\mathbb{N}$. This completes the proof.
	\end{proof}	
	
	\medskip	
	
	{\bf Projection operators.} For each $k\geq 0$, we define $\dot{\mathcal{H}}_k(\rno\setminus\{o\})$ as the closed subspace spanned by spherical harmonics of order $k$ multiplied by radial functions. Any function $u(x)\in C_{c}^\infty(\rno\setminus\{o\})$ can be expressed as follows:
	\begin{equation*}
		u(x)=u(\varrho,\sigma)=\sum_{k=0}^{\infty}\mathcal{P}_k u,
	\end{equation*}
	where $\mathcal{P}_k u=d_{k}(\varrho)\Phi_k(\sigma)\in \dot{\mathcal{H}}_k(\rno\setminus\{o\})$ and $\mathcal{P}_k$ is the projection operator on $\dot{\mathcal{H}}_k(\rno\setminus\{o\}).$
	
	%%%%%%%%%%%%
	
	\medspace
	
	{\bf Subspace.} We have already introduced the spherical harmonics decomposition for a function $u\in C_c^\infty(\rno\setminus\{o\})$.  We will introduce a certain subspace of $C_c^\infty(\rno\setminus\{o\})$. First, consider $u\in C_c^\infty(\rno\setminus\{o\})$ and we can write $$u(x)=u(\varrho,\sigma)=\sum_{k=0}^{\infty}d_{k}(\varrho)\Phi_k(\sigma)=\sum_{k=0}^{\infty}\mathcal{P}_k(\varrho,\sigma)$$
	and now define for $j\in\mathbb{N}\cup \{0\}$ and $\Omega\subset \rno\setminus\{o\}$,
	\begin{align*}
		\mathcal{S}_j(\Omega)&=\biggl\{u\in C_c^\infty(\Omega) : \mathcal{P}_0(\varrho,\sigma)=\mathcal{P}_1(\varrho,\sigma)=\cdots=\mathcal{P}_j(\varrho,\sigma)=0\biggr\}\\
		&=\biggl\{u\in C_c^\infty(\Omega) : u(x)=u(\varrho,\sigma)=\sum_{k=j+1}^{\infty}d_{k}(\varrho)\Phi_{k}(\sigma)\biggr\}.
	\end{align*}
	For notational clarity, we want to mention that $\mathcal{S}_{-1}(\Omega)=C_c^\infty(\Omega).$
	%%%%%%%%%%%%%	
	
	\medskip

	\begin{definition}[Bessel pair]
		We say that a pair $(V,W)$ of $C^1$-functions is a Bessel pair on $(0,R)$ for some $0<R\leq \infty$  if the ordinary differential equation:
		\begin{equation*}%\label{def_bessel_pair}
			(Vy')'+Wy=0
		\end{equation*} 
		admits a positive solution $f$ on the interval $(0,R)$.
	\end{definition}
	\begin{definition}
		We say that a pair $(V,W)$ of $C^1$-functions is a $Q$-dimensional Bessel pair on $(0,R)$ for some $0<R\leq \infty$  if the ordinary differential equation:
		\begin{equation*}
			(\varrho^{Q-1}Vy')'+ \varrho^{Q-1}Wy=0
		\end{equation*} 
		admits a positive solution $f$ on the interval $(0,R)$.
	\end{definition}
	
	\medspace
	
	\section{Proof of Theorem~\ref{hardy-rell-iden} and corollaries}
	
	The purpose of this section is to prove Hardy-type identity using a Bessel pair, namely Theorem~\ref{hardy-rell-iden}. Additionally, we will prove multiple enhancements of Hardy inequality, including the improvement of the optimality of the classical Hardy constant and the weighted version of the classical Hardy inequality. Finally, we will develop a refined Hardy-type inequality for functions in $\mathcal{S}_j$. The proof of Theorem~\ref{hardy-rell-iden} is where we start.
	
	\medskip
	
	{\bf Proof of Theorem~\ref{hardy-rell-iden}:} Let us begin with $u\in C_c^\infty(B_R(o)\setminus\{o\})$ and form the spherical harmonic decomposition of $u$ we can write
		\begin{align*}
			u(\varrho,\sigma)=\sm d_k(\varrho)\Phi_k(\sigma).
		\end{align*}	
		Next, exploiting the polar coordinate structure,  using orthonormal properties of $\{\Phi_{k}\},$ and Lemma~\ref{int-by-parts}, we deduce
		\begin{align*}
			&\int_{B_R(o)} V(\varrho)\left|\gradg \left(\frac{u}{f(\varrho)}\right)\right|^2f(\varrho)^2\dx\dt\\
			&=\frac{1}{2}\sm \int_{\Omega}\izfr  V\bigg[ {( d_k/f)^\prime}^2\Phi_k^2+\frac{4d_k^2}{f^2\varrho^2}|\nabla_\sigma \Phi_k|^2\bigg]\varrho^{n+1}f^2\dr\dsn\\
			&=\frac{1}{2}\sm\int_{0}^R\bigg[V{d_k^\prime}^2\varrho^{n+1}-V{(d_k^2)}^\prime \frac{f^\prime}{f}\varrho^{n+1}+V\frac{{f^\prime}^2}{f^2}d_k^2\varrho^{n+1}+4\lambda_k V d_k^2\varrho^{n-1}\bigg]\dr.
		\end{align*}
		Now, using integration by parts, we deduce,
		\begin{align*}
			\izfr V{(d_k^2)}^\prime \frac{f^\prime}{f}\varrho^{n+1}\dr&=-\izfr V^\prime d_k^2 \frac{f^\prime}{f}\varrho^{n+1}\dr+\izfr V d_k^2 \frac{{f^\prime}^2}{f^2}\varrho^{n+1}\dr\\&-\izfr V d_k^2 \frac{f^{\prime\prime}}{f}\varrho^{n+1}\dr-(Q-1)\izfr V d_k^2 \frac{f^\prime}{f}\varrho^{n}\dr.
		\end{align*}
		Therefore, we have
		\begin{align*}
			&\int_{B_R(o)} V(\varrho)\left|\gradg \left(\frac{u}{f(\varrho)}\right)\right|^2f(\varrho)^2\dx\dt\\
			&=\frac{1}{2}\sm\izfr\bigg[V{d_k^\prime}^2\varrho^{n+1}+ V^\prime d_k^2 \frac{f^\prime}{f}\varrho^{n+1}+ V d_k^2 \frac{f^{\prime\prime}}{f}\varrho^{n+1}\\&+(Q-1) V d_k^2 \frac{f^\prime}{f}\varrho^{n}+4\lambda_k V d_k^2\varrho^{n-1}\bigg]\dr.
		\end{align*}
		Also, we have
		\begin{align*}
			&\int_{B_R(o)} V(\varrho)|\gradg u|^2\dx\dt\\
			&=\frac{1}{2}\sm \int_{\Omega}\izfr  V\bigg[ { d_k^\prime}^2\Phi_k^2+\frac{4 d_k^2}{\varrho^2}|\nabla_\sigma \Phi_k|^2\bigg]\varrho^{n+1}\dr\dsn\\
			&=\frac{1}{2}\sm\int_{0}^R\bigg[V{d_k^\prime}^2\varrho^{n+1}+4\lambda_k V d_k^2\varrho^{n-1}\bigg]\dr.
		\end{align*}
		Finally, after substituting and using the equation corresponding to the Bessel pair we obtain
		\begin{align*}
			&\int_{B_R(o)} V(\varrho)|\gradg u|^2\dx\dt-	\int_{B_R(o)}V(\varrho)\bigg|\gradg\bigg(\frac{u}{f(\varrho)}\bigg)\bigg|^2f(\varrho)^2\dx\dt\\
			&=\frac{1}{2}\sm\izfr\bigg[-V^\prime  f^\prime-V f^{\prime\prime}-(Q-1)\frac{V}{\varrho}  f^\prime\bigg]\frac{d_k^2}{f}\varrho^{n+1}\dr\\
			&=\frac{1}{2}\sm\izfr Wd_k^2\varrho^{n+1}\dr\\
			&=\int_{B_R(o)}W(\varrho)u^2|\gradg \varrho|^2\dx\dt.
		\end{align*}
		The second identity follows with similar steps and decomposition. $\qed$
	
	\medskip

	The above theorem leads us to the following result. 	
	\begin{corollary}\label{sym-hardy}
		Let $G$ be the Grushin space with dimension $Q$ with $Q\geq 4$ and $B_R(o)$ denotes the $\varrho-$gauge ball with radius $R$ for $0<R\leq \infty$. If $(V, W)$ is a $Q$-dimensional Bessel pair on $(0,R)$, with $V\geq 0$, then for $u\in C_c^\infty(B_R(o)\setminus\{o\})$ there holds
		\begin{align*}
			\int_{B_R(o)} V(\varrho)|\gradg u|^2\dx\dt\geq \int_{B_R(o)}W(\varrho)u^2|\gradg \varrho|^2\dx\dt,
		\end{align*}
		and
		\begin{align*}
			\int_{B_R(o)} V(\varrho)|\rgradg u|^2\dx\dt\geq \int_{B_R(o)}W(\varrho)u^2|\gradg \varrho|^2\dx\dt.
		\end{align*}
	\end{corollary}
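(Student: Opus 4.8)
The plan is to obtain this corollary as an immediate consequence of the identity established in Theorem~\ref{hardy-rell-iden}, simply by discarding a remainder term that the extra hypothesis $V\geq 0$ forces to be non-negative. Since $(V,W)$ is a $Q$-dimensional Bessel pair on $(0,R)$, by definition the associated ODE $(\varrho^{Q-1}Vy')'+\varrho^{Q-1}Wy=0$ admits a positive solution $f$ on $(0,R)$. Because $f>0$ throughout $(0,R)$ and $u\in C_c^\infty(B_R(o)\setminus\{o\})$ has support bounded away from the origin, the quotient $u/f(\varrho)$ is a well-defined smooth function with the same support, so the first identity of Theorem~\ref{hardy-rell-iden} applies verbatim and yields
\begin{align*}
	\int_{B_R(o)} V(\varrho)|\gradg u|^2\dx\dt-\int_{B_R(o)}W(\varrho)u^2|\gradg \varrho|^2\dx\dt=\int_{B_R(o)}V(\varrho)\bigg|\gradg\bigg(\frac{u}{f(\varrho)}\bigg)\bigg|^2f(\varrho)^2\dx\dt.
\end{align*}

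The key observation is then that the integrand on the right-hand side is a product of three non-negative factors: $V(\varrho)\geq 0$ by the added hypothesis, $f(\varrho)^2\geq 0$ trivially, and $\big|\gradg(u/f(\varrho))\big|^2\geq 0$ as a squared modulus. Hence the entire right-hand side is non-negative, and rearranging the identity gives
\begin{align*}
	\int_{B_R(o)} V(\varrho)|\gradg u|^2\dx\dt\geq \int_{B_R(o)}W(\varrho)u^2|\gradg \varrho|^2\dx\dt,
\end{align*}
which is the first asserted inequality.

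For the second inequality, I would repeat the argument with the \emph{second} identity of Theorem~\ref{hardy-rell-iden}, which has the same structure but with $\rgradg$ replacing $\gradg$ on both sides; the remainder $\int_{B_R(o)}V(\varrho)\big|\rgradg(u/f(\varrho))\big|^2f(\varrho)^2\dx\dt$ is again non-negative precisely because $V\geq 0$, giving the radial version. I do not anticipate any genuine obstacle here, as the entire content is the passage from identity to inequality by dropping a sign-definite term; the only point meriting a remark is that the positivity of $f$, guaranteed by the Bessel-pair definition, is what makes the quotient $u/f(\varrho)$ admissible so that Theorem~\ref{hardy-rell-iden} may be invoked, and that the hypothesis $V\geq 0$ is exactly the minimal condition needed to sign the remainder.
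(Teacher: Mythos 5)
Your proposal is correct and coincides with the paper's (implicit) argument: the corollary is stated as an immediate consequence of Theorem~\ref{hardy-rell-iden}, obtained by dropping the remainder term $\int_{B_R(o)}V(\varrho)\big|\gradg(u/f(\varrho))\big|^2 f(\varrho)^2\dx\dt$ (resp.\ its $\rgradg$ analogue), which is non-negative precisely because $V\geq 0$. Your observations about the positivity of $f$ making $u/f(\varrho)$ admissible are sound and require no further elaboration.
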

	
	%%%%
	
	In the next theorem, we shall refine the above theorem for functions in $\mathcal{S}_j,$ a subspace of $C_c^{\infty}( \mathbb{R}^{n+1}\setminus \{ o\}).$ We deduce that the optimal Hardy constant depends on the parameter $j.$ Also, see 
	\cite{GR-23} for related results in the Riemannian symmetric manifolds, in particular, in the hyperbolic space. 
	
	\begin{theorem}\label{sub-hardy}
		Let $G$ be the Grushin space with dimension $Q$ with $Q\geq 4$ and $B_R(o)$ denotes the $\varrho-$gauge ball with radius $R$ for $0<R\leq \infty$. If $(V, W)$ is a $Q$-dimensional Bessel pair on $(0,R)$ with positive solution $f$, then for $u\in \mathcal{S}_j(B_R(o)\setminus\{o\})$ there holds
		\begin{align*}
			\int_{B_R(o)} V(\varrho)|\gradg u|^2\dx\dt&\geq\int_{B_R(o)}W(\varrho)u^2|\gradg \varrho|^2\dx\dt\\&+(j+1)(Q+j-1)\int_{B_R(o)}\frac{V(\varrho)}{\varrho^2}u^2|\gradg \varrho|^2\dx\dt\\&+\int_{B_R(o)}V(\varrho)\bigg|\rgradg\bigg(\frac{u}{f(\varrho)}\bigg)\bigg|^2f(\varrho)^2\dx\dt.
		\end{align*}
	\end{theorem}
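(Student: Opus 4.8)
The plan is to rerun the spherical-harmonic computation behind Theorem~\ref{hardy-rell-iden}, but now \emph{keeping} the angular contribution instead of absorbing it, exploiting that membership in $\mathcal{S}_j$ kills all harmonic modes of order $\le j$. First I would expand $u(\varrho,\sigma)=\sum_{k=j+1}^{\infty} d_k(\varrho)\Phi_k(\sigma)$, where the sum begins at $k=j+1$ precisely because $u\in\mathcal{S}_j(B_R(o)\setminus\{o\})$. Passing to polar coordinates, invoking orthonormality of $\{\Phi_k\}$ and Lemma~\ref{int-by-parts} (so that $\int_\Omega|\nabla_\sigma\Phi_k|^2\dsn=\lambda_k$ with $\lambda_k=k(k+n)/4$), the Grushin Dirichlet energy separates cleanly into a radial and an angular part:
\begin{align*}
\int_{B_R(o)}V(\varrho)|\gradg u|^2\dx\dt=\int_{B_R(o)}V(\varrho)|\rgradg u|^2\dx\dt+2\sum_{k=j+1}^{\infty}\lambda_k\izfr V(\varrho)\,d_k(\varrho)^2\,\varrho^{n-1}\dr .
\end{align*}

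For the radial part I would simply quote the second identity of Theorem~\ref{hardy-rell-iden}, valid for every admissible $u$ and in particular on $\mathcal{S}_j$:
\begin{align*}
\int_{B_R(o)}V(\varrho)|\rgradg u|^2\dx\dt=\int_{B_R(o)}W(\varrho)u^2|\gradg\varrho|^2\dx\dt+\int_{B_R(o)}V(\varrho)\left|\rgradg\left(\frac{u}{f(\varrho)}\right)\right|^2 f(\varrho)^2\dx\dt .
\end{align*}
This already supplies the first and third terms on the right-hand side of the claim, so the whole matter reduces to bounding the angular sum from below by the asserted multiple of $\int_{B_R(o)}\varrho^{-2}V u^2|\gradg\varrho|^2$.

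The key step is the elementary spectral estimate: $\lambda_k=k(k+n)/4$ is increasing in $k$, hence $\lambda_k\ge\lambda_{j+1}$ for every surviving index $k\ge j+1$, and therefore (using $V\ge0$ to make the comparison legitimate term by term)
\begin{align*}
2\sum_{k=j+1}^{\infty}\lambda_k\izfr V\,d_k^2\,\varrho^{n-1}\dr\ge 2\lambda_{j+1}\sum_{k=j+1}^{\infty}\izfr V\,d_k^2\,\varrho^{n-1}\dr=4\lambda_{j+1}\int_{B_R(o)}\frac{V(\varrho)}{\varrho^2}u^2|\gradg\varrho|^2\dx\dt .
\end{align*}
Recording the arithmetic identity $4\lambda_{j+1}=(j+1)(j+1+n)=(j+1)(Q+j-1)$ (since $Q=n+2$) turns the spectral constant into exactly the coefficient appearing in the statement; assembling the three displays then gives the inequality.

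The only genuine loss — the single point where the identity of Theorem~\ref{hardy-rell-iden} degrades to an inequality — is the uniform replacement $\lambda_k\mapsto\lambda_{j+1}$ in the angular sum, which is also where the nonnegativity $V\ge0$ enters. No part of the argument is really an obstacle; moreover the sharpness of the constant $(j+1)(Q+j-1)$ is transparent from this scheme, since concentrating $u$ on the lowest admissible mode $k=j+1$ makes the spectral estimate an equality.
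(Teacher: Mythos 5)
Your proposal is correct and takes essentially the same route as the paper: the paper's proof likewise reruns the spherical-harmonic computation of Theorem~\ref{hardy-rell-iden}, observes that with the radial remainder $\int V\big|\rgradg(u/f)\big|^2 f^2$ the angular terms $4\lambda_k$ no longer cancel, and bounds the surviving sum below using $4\lambda_k\geq (j+1)(Q+j-1)$ for all $k\geq j+1$. Your explicit remark that $V\geq 0$ is needed to make the termwise replacement $\lambda_k\mapsto\lambda_{j+1}$ legitimate is a hypothesis the paper's statement leaves implicit, but it is equally required by the paper's own argument.
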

	\begin{proof}
		The proof is exactly similar to the proof of Theorem \ref{hardy-rell-iden}. In that proof, the term involving $\lambda_{k}$ was canceling. But in this case, as the last term is radial gradient, the term involving $\lambda_{k}$ will not be canceled out. Then using the estimate $4\lambda_{k}\geq (j+1)(Q+j-1)$ for all $k\geq j+1$ on $\mathcal{S}_j(B_R(o)\setminus\{o\})$ the result follows.
	\end{proof}
	
	\medskip
	
	\begin{remark}
		{\rm	
			Observe that $\big(1,\frac{(Q-2)^2}{4}\frac{1}{\varrho^2}\big)$ is a $Q$-dimensional Bessel pair on $(0,\infty)$ with positive solution $\varrho^{-\frac{Q-2}{2}}$. Then, substituting this particular Bessel pair in Theorem \ref{sub-hardy} we obtain the Hardy inequality with a constant larger than the classical Hardy constant. This reads as follows : for $u\in \mathcal{S}_j(B_R(o)\setminus\{o\})$ there holds
			\begin{align*}
				\int_{\rno} |\gradg u|^2\dx\dt&\geq\bigg[\frac{(Q-2)^2}{4}+(j+1)(Q+j-1)\bigg]\int_{\rno}\frac{|u|^2}{\varrho^2}|\gradg \varrho|^2\dx\dt.
			\end{align*}
		}		
	\end{remark}
	
	\medskip
	
	\begin{corollary}\label{wg-hardy}
		Let $G$ be the Grushin space with dimension $Q\geq 4$ and $\alpha\in\mathbb{R}$. 
		%such that $Q > \alpha+2$. 
		Then for all $u\in C_c^\infty(\rno\setminus\{o\})$ there holds
		\begin{align*}
			\int_{\rno} \frac{|\gradg u|^2}{\varrho^\alpha}\dx\dt&-\frac{(Q-2-\alpha)^2}{4}\int_{\rno}\frac{|u|^2|\gradg \varrho|^2}{\varrho^{\alpha+2}}\dx\dt\\&=\int_{\rno}\varrho^{2-Q}\big|\gradg\big(u\varrho^{\frac{Q-2-\alpha}{2}}\big)\big|^2\dx\dt,
		\end{align*}
		and
		\begin{align*}
			\int_{\rno} \frac{|\rgradg u|^2}{\varrho^\alpha}\dx\dt&-\frac{(Q-2-\alpha)^2}{4}\int_{\rno}\frac{|u|^2|\gradg \varrho|^2}{\varrho^{\alpha+2}}\dx\dt\\&=\int_{\rno}\varrho^{2-Q}\big|\rgradg\big(u\varrho^{\frac{Q-2-\alpha}{2}}\big)\big|^2\dx\dt.
		\end{align*}
	\end{corollary}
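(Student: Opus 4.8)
The plan is to derive both identities as a direct specialization of Theorem~\ref{hardy-rell-iden}, by exhibiting a one-parameter family of $Q$-dimensional Bessel pairs tailored to the weight $\varrho^{-\alpha}$. Concretely, I would take
$$V(\varrho)=\varrho^{-\alpha}, \qquad W(\varrho)=\frac{(Q-2-\alpha)^2}{4}\,\varrho^{-\alpha-2},$$
and propose $f(\varrho)=\varrho^{-\frac{Q-2-\alpha}{2}}$ as the candidate positive solution. This is the natural $\alpha$-weighted generalization of the pair recorded in the remark preceding the statement, which corresponds to $\alpha=0$.

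First I would verify that $(V,W)$ is indeed a $Q$-dimensional Bessel pair on $(0,\infty)$ with this $f$. This amounts to plugging $f$ into the defining ODE $(\varrho^{Q-1}Vf')'+\varrho^{Q-1}Wf=0$. A short computation shows $\varrho^{Q-1}Vf'=-\frac{Q-2-\alpha}{2}\varrho^{\frac{Q-2-\alpha}{2}}$, whence $(\varrho^{Q-1}Vf')'=-\frac{(Q-2-\alpha)^2}{4}\varrho^{\frac{Q-4-\alpha}{2}}$; on the other hand $\varrho^{Q-1}Wf=\frac{(Q-2-\alpha)^2}{4}\varrho^{\frac{Q-4-\alpha}{2}}$, and the two cancel exactly for every real $\alpha$. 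Since $f$ is manifestly positive on $(0,\infty)$, the Bessel-pair hypothesis of Theorem~\ref{hardy-rell-iden} is met.

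Next I would substitute this pair into the first identity of Theorem~\ref{hardy-rell-iden} with $R=\infty$, so that $B_R(o)=\rno$. The two terms on the left become $\int_{\rno}\varrho^{-\alpha}|\gradg u|^2\dx\dt$ and $\frac{(Q-2-\alpha)^2}{4}\int_{\rno}\varrho^{-\alpha-2}u^2|\gradg\varrho|^2\dx\dt$, matching the stated left-hand side. For the right-hand side, the key simplification is that $u/f=u\varrho^{\frac{Q-2-\alpha}{2}}$ while $f^2=\varrho^{-(Q-2-\alpha)}$, so the prefactor collapses to $Vf^2=\varrho^{-\alpha}\varrho^{-(Q-2-\alpha)}=\varrho^{2-Q}$, independent of $\alpha$; this yields precisely $\int_{\rno}\varrho^{2-Q}\big|\gradg(u\varrho^{\frac{Q-2-\alpha}{2}})\big|^2\dx\dt$. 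The radial identity follows verbatim by feeding the same Bessel pair into the second identity of Theorem~\ref{hardy-rell-iden}, replacing $\gradg$ by $\rgradg$ throughout.

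There is essentially no genuine obstacle here: all the analytic content—the spherical-harmonic decomposition and the integration-by-parts bookkeeping—is already carried out inside Theorem~\ref{hardy-rell-iden}. The only point requiring care is the exponent arithmetic in the ODE verification, and in particular checking that the weight $Vf^2$ reduces to the clean $\alpha$-independent factor $\varrho^{2-Q}$; this is exactly what makes the remainder term take the displayed form. I would also record that the identity holds for \emph{all} $\alpha\in\mathbb{R}$ precisely because the cancellation in the Bessel ODE is an algebraic identity in $\alpha$, requiring no sign or range restriction.
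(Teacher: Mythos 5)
Your proposal is correct and coincides with the paper's own proof: the paper likewise applies Theorem~\ref{hardy-rell-iden} to the $Q$-dimensional Bessel pair $\left(\varrho^{-\alpha},\,\tfrac{(Q-2-\alpha)^2}{4}\varrho^{-\alpha-2}\right)$ on $(0,\infty)$ with positive solution $f(\varrho)=\varrho^{\frac{2-Q+\alpha}{2}}$, which is exactly your $f$. The only difference is that you write out the (correct) ODE verification and the simplification $Vf^2=\varrho^{2-Q}$, steps the paper leaves implicit.
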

	\begin{proof}
		We apply Theorem \ref{hardy-rell-iden} to the $Q$-dimensional Bessel pair $\left(\frac{1}{\varrho^\alpha}, \frac{(Q-2-\alpha)^2}{4}\frac{1}{\varrho^{\alpha+2}}\right)$ on $(0,\infty)$ with $f(\varrho)=\varrho^{\frac{2-Q+\alpha}{2}}$ to get the desired results.
	\end{proof}
	\begin{remark}
		{\rm	
			The above corollary can be compared with the identity similar to \cite[Theorem 1]{mow-adv} on the Euclidean space in terms of the weighted version. Notice that we have used the spherical harmonic technique, whereas in \cite{mow-adv} a different approach was used.  Moreover, in the above identity by neglecting the non-negative remainder term we obtain 
			\begin{align*}
				\int_{\rno} \frac{|\rgradg u|^2}{\varrho^\alpha}\dx\dt\geq \frac{(Q-2-\alpha)^2}{4}\int_{\rno}\frac{|u|^2|\gradg \varrho|^2}{\varrho^{\alpha+2}}\dx\dt,
			\end{align*}
			holds for every $u\in C_c^\infty(\rno\setminus\{o\})$. Now, by noticing that $|\gradg u|^2\geq |\nabla_{\varrho, G}u|^2$, we deduce that this weighted Hardy inequality immediately gives an improvement of the original weighted Hardy inequality on Grushin space. Also, this implies that the constant is the optimal one.
		}		
	\end{remark}

		\begin{corollary}
			Let $G$ be the Grushin space with dimension $Q$ with $Q\geq 4$ and $B_R(o)$ denotes the $\varrho-$gauge ball with radius $R>0$. Then for $u\in C_c^\infty(B_R(o)\setminus\{o\})$ there holds
			\begin{align*}
				\int_{B_R(o)}|\gradg u|^2\dx\dt&-\frac{(Q-2)^2}{4}\int_{B_R(o)}\frac{u^2|\gradg \varrho|^2}{\varrho^2}\dx\dt-\frac{z_0^2}{R^2}\int_{B_R(o)}u^2|\gradg \varrho|^2\dx\dt\\&=\int_{B_R(o)}\varrho^{2-Q}J_0^2\left(\frac{\varrho\,z_0}{R}\right)\bigg|\gradg\bigg(\varrho^{\frac{Q-2}{2}}\frac{u}{J_0(\frac{\varrho\,z_0}{R})}\bigg)\bigg|^2\dx\dt,
			\end{align*}
			and
			\begin{align*}
				\int_{B_R(o)}|\rgradg u|^2\dx\dt&-\frac{(Q-2)^2}{4}\int_{B_R(o)}\frac{u^2|\gradg \varrho|^2}{\varrho^2}\dx\dt-\frac{z_0^2}{R^2}\int_{B_R(o)}u^2|\gradg \varrho|^2\dx\dt\\&=\int_{B_R(o)}\varrho^{2-Q}J_0^2\left(\frac{\varrho\,z_0}{R}\right)\bigg|\rgradg\bigg(\varrho^{\frac{Q-2}{2}}\frac{u}{J_0(\frac{\varrho\,z_0}{R})}\bigg)\bigg|^2\dx\dt,
			\end{align*}
			where $z_0=2.4048\ldots$ is the first zero of the Bessel function $J_0(z)$.
		\end{corollary}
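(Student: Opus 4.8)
The plan is to realize both stated identities as the two conclusions of Theorem~\ref{hardy-rell-iden}, applied to a single, carefully chosen $Q$-dimensional Bessel pair whose positive solution is manufactured from the Bessel function $J_0$. Matching the claimed right-hand side against the general form $\int_{B_R(o)} V(\varrho)\big|\gradg(u/f)\big|^2 f^2\dx\dt$ forces the choices
\begin{align*}
V(\varrho)\equiv 1, \qquad W(\varrho)=\frac{(Q-2)^2}{4}\frac{1}{\varrho^2}+\frac{z_0^2}{R^2}, \qquad f(\varrho)=\varrho^{\frac{2-Q}{2}}J_0\!\left(\frac{z_0\varrho}{R}\right),
\end{align*}
since with these one has $V f^2=\varrho^{2-Q}J_0^2(z_0\varrho/R)$ and $u/f=\varrho^{\frac{Q-2}{2}}u/J_0(z_0\varrho/R)$, which is exactly the integrand appearing on the right-hand side of the corollary.

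First I would verify that $(V,W)$ is a $Q$-dimensional Bessel pair on $(0,R)$ with the displayed $f$ as a positive solution. Because $V\equiv 1$, the defining equation $(\varrho^{Q-1}f')'+\varrho^{Q-1}Wf=0$ reduces to $f''+\tfrac{Q-1}{\varrho}f'+Wf=0$. Writing $f=\varrho^{a}g$ with $a=\tfrac{2-Q}{2}$ and $g(\varrho)=J_0(z_0\varrho/R)$, and using the Bessel equation $J_0''+\tfrac{1}{z}J_0'+J_0=0$ in the rescaled form $g''+\tfrac{1}{\varrho}g'+\tfrac{z_0^2}{R^2}g=0$, a direct substitution shows that the coefficient of the $g'$-term equals $2a+Q-2=0$, so it cancels, while the coefficient of the $g$-term collapses to $-\big(\tfrac{(Q-2)^2}{4\varrho^2}+\tfrac{z_0^2}{R^2}\big)$, which is precisely $-W$. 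The prefactor $\varrho^{(2-Q)/2}$ and the scaling $z_0/R$ are tuned exactly so that this cancellation and reassembly occur.

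Next I would check positivity of $f$ on $(0,R)$. As $\varrho$ ranges over $(0,R)$, the argument $z_0\varrho/R$ stays in the open interval $(0,z_0)$; since $z_0=2.4048\ldots$ is by definition the \emph{first} positive zero of $J_0$ and $J_0(0)=1>0$, we have $J_0(z_0\varrho/R)>0$ throughout, whence $f(\varrho)>0$ on $(0,R)$. With $(V,W)$ thus confirmed as a $Q$-dimensional Bessel pair admitting the positive solution $f$, both claimed identities follow immediately by inserting this pair into the two conclusions of Theorem~\ref{hardy-rell-iden}; the second (radial) identity arises from the second conclusion, with $\gradg$ replaced by $\rgradg$.

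The computation is entirely routine, and no genuine obstacle arises. The only point demanding care is the ODE verification, where one must track the exact vanishing of the $g'$-term through the identity $2a+Q-2=0$ and the recombination of the two surviving zeroth-order contributions into $-W$; everything else is a direct specialization of the already-established abstract identity.
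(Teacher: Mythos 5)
Your proposal is correct and follows exactly the paper's route: the paper likewise observes that $\big(1,\tfrac{(Q-2)^2}{4}\tfrac{1}{\varrho^2}+\tfrac{z_0^2}{R^2}\big)$ is a $Q$-dimensional Bessel pair on $(0,R)$ with positive solution $f(\varrho)=\varrho^{-\frac{Q-2}{2}}J_0(\tfrac{\varrho z_0}{R})$ and then invokes Theorem~\ref{hardy-rell-iden} for both conclusions. The only difference is that you spell out the ODE verification and the positivity of $J_0$ on $(0,z_0)$, which the paper leaves implicit.
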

		\begin{proof}
			Notice that $\big(1,\frac{(Q-2)^2}{4}\frac{1}{\varrho^2}+\frac{z_0^2}{R^2}\big)$ is a $Q$-dimensional Bessel pair on $(0, R)$ for $R>0$ with positive solution $f(\varrho)=\varrho^{-\frac{Q-2}{2}}J_0(\frac{\varrho\,z_0}{R})$ and then applying this in Theorem~\ref{hardy-rell-iden} we obtain the required identities.
		\end{proof}
		\begin{remark}
			\rm The above results immediately give the improvements of Hardy inequality as of Brezis-V\'azquez type for the Grushin operator. This type of improvement for this setting was first observed in results \cite[Theorem~1.1]{YSK}, but we obtain it in the full spirit of \cite[Theorem~4.1]{bv} with explicit remainder terms. \it
		\end{remark}

	\medspace
	
	\section{Abstract Hardy-Rellich inequality: Proof of Theorem~\ref{r-rell} and Theorem~\ref{nr-rell}}
	Now we will prove the abstract Hardy-Rellich identity via the Bessel pair for radial functions. We shall first prove our theorem for radial functions, namely, Theorem~\ref{r-rell}. The proof is a straightforward application of integration by parts, and expanding the square of the Grushin operator on the radial function.  We begin the proof of Theorem~\ref{r-rell}.

	{\bf Proof of Theorem~\ref{r-rell}:} Let us assume $\psi=\sin\phi = |\gradg \varrho|^2$. Also observe that for the radial function $u$, we have $|\gradg u|^2=\psi\big(\frac{\partial u}{\partial \varrho}\big)^2$. Now using polar coordinate decomposition, we have
	\begin{align*}
		&\int_{B_{R}(o)} V(\varrho) \frac{(\lapg u)^{2}}{|\gradg \varrho|^{2}} \dx\dt\\ &=\frac{1}{2}	\int_{\Omega}\int_{0}^{R}V(\varrho)\frac{\psi^2}{\psi}\bigg[\frac{\partial^2 u}{\partial \varrho^2}+\frac{Q-1}{\varrho}\frac{\partial u}{\partial \varrho}\bigg]^2\varrho^{n+1}(\sin\phi)^{\frac{n-2}{2}}\dr\dph\dw\\&=\frac{1}{2}	\int_{\Omega}\int_{0}^{R}V(\varrho)\bigg[\frac{\partial^2 u}{\partial \varrho^2}+\frac{Q-1}{\varrho}\frac{\partial u}{\partial \varrho}\bigg]^2\varrho^{n+1}(\sin\phi)^{\frac{n}{2}}\dr\dph\dw\\&=\frac{1}{2}	\int_{0}^{R}V(\varrho)\bigg[\frac{\partial^2 u}{\partial \varrho^2}+\frac{Q-1}{\varrho}\frac{\partial u}{\partial \varrho}\bigg]^2\varrho^{n+1}\dr\int_{\Omega}(\sin\phi)^{\frac{n}{2}}\dph\dw\\&=\frac{1}{2}|\Omega|	\int_{0}^{R}V(\varrho)\bigg[\frac{\partial^2 u}{\partial \varrho^2}+\frac{Q-1}{\varrho}\frac{\partial u}{\partial \varrho}\bigg]^2\varrho^{n+1}\dr\\&=\frac{1}{2}|\Omega| \int_{0}^{R} V(\varrho) \bigg[ \left(\frac{\partial^{2} u}{\partial \varrho^{2}}\right)^{2}+\frac{(Q-1)^{2}}{\varrho^{2}}\left(\frac{\partial u}{\partial \varrho}\right)^{2}+2\frac{(Q-1)}{\varrho} \frac{\partial^{2} u}{\partial \varrho^{2}} \frac{\partial u}{\partial \varrho}\bigg] \varrho^{n+1} \dr.
	\end{align*}
	We will deal with each integral separately. Let us define and apply Theorem \ref{hardy-rell-iden}, and we have
	\begin{align*}
		I_1&:=\frac{1}{2}|\Omega| \int_{0}^{R} V(\varrho)\left(\frac{\partial^{2} u}{\partial \varrho^{2}}\right)^{2} \varrho^{n+1} \dr\\&=\frac{1}{2} \int_{\Omega} \int_{0}^{R} V(\varrho)\left(\frac{\partial^{2} u}{\partial \varrho^{2}}\right)^{2}(\sin \phi)^{\frac{n}{2}} \varrho^{n+1} \dr\dph\dw\\&= \int_{\Omega} \int_{0}^{R} V(\varrho)\psi \left(\frac{\partial^{2} u}{\partial \varrho^{2}}\right)^{2}\frac{1}{2}\varrho^{n+1}(\sin \phi)^{\frac{n-2}{2}} \dr\dph\dw\\&=\int_{B_R(o)} V(\varrho)\left|\nabla_{G} u_{\varrho}\right|^{2} \dx\dt\\&=\int_{B_R(o)}W(\varrho)u_\varrho^2|\gradg \varrho|^2\dx\dt+\int_{B_R(o)}V(\varrho)\bigg|\gradg\bigg(\frac{u_\varrho}{f(\varrho)}\bigg)\bigg|^2 f(\varrho)^2\dx\dt\\&=\int_{B_R(o)}W(\varrho)|\gradg u|^2\dx\dt+\int_{B_R(o)}V(\varrho)\bigg|\gradg\bigg(\frac{u_\varrho}{f(\varrho)}\bigg)\bigg|^2 f(\varrho)^2\dx\dt.
	\end{align*}
	Next, consider
	\begin{align*}
		I_{2}&:=\frac{1}{2}|\Omega| \int_{0}^{R}V(\varrho)\frac{(Q-1)^{2}}{\varrho^{2}}\left(\frac{\partial u}{\partial \varrho}\right)^{2}\varrho^{n+1}\dr\\&=\frac{1}{2}\int_{\Omega} \int_{0}^{R}V(\varrho)\frac{(Q-1)^{2}}{\varrho^{2}}\left(\frac{\partial u}{\partial \varrho}\right)^{2}(\sin \phi)^{\frac{n}{2}} \varrho^{n+1} \dr\dph\dw\\&=(Q-1)^2 \int_{\Omega} \int_{0}^{R} \frac{V(\varrho)}{\varrho^{2}} \psi \left(\frac{\partial u}{\partial \varrho}\right)^{2} \frac{1}{2}\varrho^{n+1}(\sin \phi)^{\frac{n-2}{2}} \dr\dph\dw \\
		& =(Q-1)^{2} \int_{B_R(o)} \frac{V(\varrho)}{\varrho^{2}}|\gradg u|^{2} \dx\dt.
	\end{align*}
	Then, using by parts we have
	\begin{align*}
		I_{3}&:=\frac{1}{2}|\Omega| \int_{0}^R 2 V(\varrho) \frac{(Q-1)}{\varrho} \frac{\partial^{2} u}{\partial \varrho^{2}} \frac{\partial u}{\partial \varrho} \varrho^{n+1} \dr \\& = -\frac{1}{2}|\Omega|(Q-1) \int_{0}^R  \frac{V_\varrho(\varrho)}{\varrho} \bigg( \frac{\partial u}{\partial \varrho}\bigg)^2 \varrho^{n+1} \dr \\&-\frac{1}{2}|\Omega|(Q-1) (Q-2)\int_{0}^R  \frac{V(\varrho)}{\varrho^2} \bigg( \frac{\partial u}{\partial \varrho}\bigg)^2 \varrho^{n+1} \dr\\&=-(Q-1) \int_{B_R(o)} \frac{V_\varrho(\varrho)}{\varrho}|\gradg u|^{2} \dx\dt-(Q-1)(Q-2) \int_{B_R(o)} \frac{V(\varrho)}{\varrho^{2}}|\gradg u|^{2} \dx\dt.
	\end{align*}
	Hence, we have
	\begin{align*}
		I_2+I_3=-(Q-1) \int_{B_R(o)} \frac{V_\varrho(\varrho)}{\varrho}|\gradg u|^{2} \dx\dt+(Q-1) \int_{B_R(o)} \frac{V(\varrho)}{\varrho^{2}}|\gradg u|^{2} \dx\dt.
	\end{align*}
	Finally, summing all these we deduce the desired result. $\qed$
	
	\medskip
	
	Next, we shall move on to prove the Theorem~\ref{nr-rell}. The proof is quite delicate and we shall make use of the Theorem~\ref{r-rell} for the radial part of the function. Indeed, we exploit the spherical harmonics and the representations of functions on each $\mathcal{H}_k$ to deduce the theorem. We shall begin the proof of Theorem~\ref{nr-rell} below.

	{\bf Proof of Theorem~\ref{nr-rell}:} Again assume $\psi=\sin\phi = |\gradg \varrho|^2$. By spherical harmonics expansion, we can write  	$u(\varrho,\sigma)=\sm d_k(\varrho)\Phi_k(\sigma)$. Using the orthonormality of $\{\Phi_k\}$ on $L^2(\Omega, \dsn)$ we have 
	%\begin{align*}
	%|\gradg u|^2 = \psi \sm  \bigg[ {d_k^\prime}^2\Phi_k^2+\frac{4d_k^2}{\varrho^2}|\nabla_\sigma \Phi_k|^2\bigg],
	%\end{align*}
	%and 
	\begin{align*}
		\int_{\Omega}(\lapg u)^2\psi^{-2}\dsn & =  \sum_{k = 0}^{\infty} \left( d_{k}^{\prime \prime} + \frac{(Q-1)}{\varrho}d_{k}^{\prime} \right)^2  + 
		16\sum_{k = 0}^{\infty} \frac{d_{k}^2}{\varrho^4} \int_{\Omega}(\mathcal{L}_\sigma \Phi_{k})^2\dsn \\
		& + 8 \sum_{k = 0}^{\infty} \left( d_{k}^{\prime \prime} + \frac{(Q-1)}{\varrho}d_{k}^{\prime} \right) \frac{d_{k}}{\varrho^2} \int_{\Omega}(\mathcal{L}_\sigma \Phi_{k}) \Phi_{k}\dsn.
	\end{align*}
	Also, recall that $\{ \Phi_k \}$ is an orthonormal system of spherical harmonics in $L^2(\Omega)$ and satisfies $$-\mathcal{L}_{\sigma}\Phi_k=\lambda_k\Phi_k,$$
	for all $k\in\mathbb{N}\cup\{0\}$, where $\lambda_k=\frac{k(k+n)}{4}$. Now we will divide the proof into a few steps.
	
	{\bf Step 1.} In this step we decompose the l.h.s. of \eqref{nr-rell-eqn} and then by polar coordinate we have
	\begin{align*}
		& \int_{B_{R}(o)} V(\varrho) \frac{(\lapg u)^{2}}{|\gradg \varrho|^{2}} \dx\dt =\frac{1}{2}  \sum_{k = 0}^{\infty} \bigg[\int_0^R V(\varrho) \left( d_{k}^{\prime \prime} + \frac{(Q-1)}{\varrho}d_{k}^{\prime} \right)^2 \varrho^{n+1}\dr \\& + 
		16 \lambda_{k}^2\int_0^R V(\varrho)d_{k}^2 \varrho^{n-3} \dr - 8\lambda_k \int_0^R V(\varrho) \left( d_{k}^{\prime \prime} + \frac{(Q-1)}{\varrho}d_{k}^{\prime} \right) d_k \varrho^{n-1}\dr\bigg].
	\end{align*}
	On the other hand, for each radial function $d_k$, using Theorem \ref{r-rell}, we deduce
	\begin{align*}
		&\int_{B_{R}(o)} V(\varrho) \frac{(\lapg d_k)^{2}}{|\gradg \varrho|^{2}} \dx\dt \\& =\frac{1}{2}|\Omega|\int_0^R V(\varrho) \left( d_{k}^{\prime \prime} + \frac{(Q-1)}{\varrho}d_{k}^{\prime} \right)^2 \varrho^{n+1}\dr\\&=\int_{B_{R}(o)} W(\varrho)|\gradg d_k|^{2}\dx\dt  +(Q-1) \int_{B_{R}(o)}\left(\frac{V(\varrho)}{\varrho^{2}}-\frac{V_{\varrho}(\varrho)}{\varrho}\right)\left|\gradg d_k \right|^2 \dx\dt \\
		& +\int_{B_{R}(o)} V(\varrho)\left|\nabla_{G}\left(\frac{d_k^\prime}{f(\varrho)}\right)\right|^{2} f^{2}(\varrho)\dx\dt\\&=\frac{1}{2}|\Omega|\bigg[\int_0^R W(\varrho){d_k^\prime}^2\varrho^{n+1}\dr+(Q-1)\int_0^R V(\varrho){d_k^\prime}^2\varrho^{n-1}\dr\\&-(Q-1)\int_0^RV_\varrho(\varrho){d_k^\prime}^2\varrho^{n}\dr+\int_0^RV(\varrho)\left|\left(\frac{d_k^\prime}{f(\varrho)}\right)^\prime\right|^{2} f^{2}(\varrho)\varrho^{n+1}\dr\bigg].
	\end{align*}
	Using this in the last identity and after simplifying we have
	\begin{align*}
		& \int_{B_{R}(o)} V(\varrho) \frac{(\lapg u)^{2}}{|\gradg \varrho|^{2}} \dx\dt =\frac{1}{2}\sum_{k = 0}^{\infty} \bigg[\int_0^R W(\varrho){d_k^\prime}^2\varrho^{n+1}\dr\\&+(Q-1)\int_0^R V(\varrho){d_k^\prime}^2\varrho^{n-1}\dr-(Q-1)\int_0^RV_\varrho(\varrho){d_k^\prime}^2\varrho^{n}\dr\\&+\int_0^RV(\varrho)\left|\left(\frac{d_k^\prime}{f(\varrho)}\right)^\prime\right|^{2} f^{2}(\varrho)\varrho^{n+1}\dr + 
		16 \lambda_{k}^2\int_0^R V(\varrho)d_{k}^2 \varrho^{n-3} \dr \\&- 8\lambda_k \int_0^R V(\varrho)  d_{k}^{\prime \prime} d_k \varrho^{n-1}\dr- 8\lambda_k (Q-1)\int_0^R V(\varrho) d_{k}^{\prime}  d_k \varrho^{n-2}\dr\bigg].
	\end{align*}
	
	{\bf Step 2.} In this step we compute r.h.s. of \eqref{nr-rell-eqn}:
	\begin{align*}
		\int_{B_{R}(o)} &W(\varrho)|\gradg u|^{2}\dx\dt+\int_{B_{R}(o)} V(\varrho)\left|\nabla_{G}\left(\frac{u_\varrho}{f(\varrho)}\right)\right|^{2} f^{2}(\varrho)\dx\dt\\
		& +(Q-1) \int_{B_{R}(o)}\left(\frac{V(\varrho)}{\varrho^{2}}-\frac{V_{\varrho}(\varrho)}{\varrho}\right)\left|\gradg u\right|^2 \dx\dt\\
		&= \frac{1}{2}\sm\bigg[\int_{0}^RW(\varrho){d_k^\prime}^2\varrho^{n+1}\dr+4\lambda_{k}\int_{0}^R W(\varrho)d_k^2\varrho^{n-1}\dr\\
		&+\int_0^RV(\varrho)\left|\left(\frac{d_k^\prime}{f(\varrho)}\right)^\prime\right|^{2} f^{2}(\varrho)\varrho^{n+1}\dr+4\lambda_{k}\int_{0}^R V(\varrho){d_k^\prime}^2\varrho^{n-1}\dr\\
		&+(Q-1)\int_{0}^RV(\varrho){d_k^\prime}^2\varrho^{n-1}\dr+4\lambda_{k}(Q-1)\int_{0}^R V(\varrho)d_k^2\varrho^{n-3}\dr\\
		&-(Q-1)\int_{0}^RV_\varrho(\varrho){d_k^\prime}^2\varrho^{n}\dr-4\lambda_{k}(Q-1)\int_{0}^R V_\varrho(\varrho)d_k^2\varrho^{n-2}\dr\bigg].
	\end{align*}
	
	{\bf Step 3.} Subtracting the r.h.s. of the identities obtained in Step 1 and Step 2, we obtain the expression below that we denote by $\mathcal{R}$ the following quantity:
	\begin{align*}
		\mathcal{R}:=&\sm\bigg[4\lambda_k (4\lambda_{k}-Q+1)\int_0^R V(\varrho)d_{k}^2 \varrho^{n-3} \dr - 8\lambda_k \int_0^R V(\varrho)  d_{k}^{\prime \prime} d_k \varrho^{n-1}\dr\\
		&- 8\lambda_k (Q-1)\int_0^R V(\varrho) d_{k}^{\prime}  d_k \varrho^{n-2}\dr
		-4\lambda_{k}\int_{0}^R W(\varrho)d_k^2\varrho^{n-1}\dr\\
		&-4\lambda_{k}\int_{0}^R V(\varrho){d_k^\prime}^2\varrho^{n-1}\dr
		+4\lambda_{k}(Q-1)\int_{0}^R V_\varrho(\varrho)d_k^2\varrho^{n-2}\dr\bigg].
	\end{align*}
	In the steps below, we will show $\mathcal{R}$ is non-negative and this will establish the result. To do this we need some identities.
	
	{\bf Step 4.} Set
	\begin{align*}
		\mathcal{I}:=\int_{0}^R V(\varrho){d_k^\prime}^2\varrho^{n-1}\dr,
	\end{align*}
	\begin{align*}
		\mathcal{I}_1:=\int_0^R V(\varrho) d_{k}^{\prime}  d_k \varrho^{n-2}\dr,
	\end{align*}
	and 
	\begin{align*}
		\mathcal{I}_2:= \int_0^R V(\varrho)  d_{k}^{\prime \prime} d_k \varrho^{n-1}\dr.
	\end{align*}
	Now using the by-parts, we have
	\begin{align*}
		\mathcal{I}_1=-\frac{1}{2}\int_0^R V_\varrho(\varrho) d_{k}^{2}  \varrho^{n-2}\dr-\frac{(Q-4)}{2}\int_0^R V(\varrho) d_{k}^{2}  \varrho^{n-3}\dr,
	\end{align*}
	and
	\begin{align*}
		\mathcal{I}_2&=-\int_0^R V_\varrho(\varrho)  d_{k}^{\prime} d_k \varrho^{n-1}\dr-\mathcal{I}-(Q-3)\mathcal{I}_1\\&=\frac{1}{2}\int_0^R V_{\varrho\varrho}(\varrho)  d_{k}^2 \varrho^{n-1}\dr+\frac{(Q-3)}{2}\int_0^R V_{\varrho}(\varrho)  d_{k}^2 \varrho^{n-2}\dr-\mathcal{I}-(Q-3)\mathcal{I}_1.
	\end{align*}
	
	{\bf Step 5.} In this step we will deal with $\mathcal{I}$. Let $b_k(\varrho):=\frac{d_k(\varrho)}{\varrho}$, by Leibniz rule, performing the derivative w.r.t $\varrho$, we have $d_k^{\prime}=\varrho b_{k}^{\prime}+b_k$. Using this and the by-parts formula, we obtain
	\begin{align}\label{nrad_rellich_2}
		\mathcal{I}&=\int_{0}^R V(\varrho){d_k^\prime}^2\varrho^{n-1}\dr\notag\\&=\int_{0}^{R}V(\varrho) {b_{k}^{\prime}}^2\varrho^{n+1}\dr-\int_{0}^{R}V_\varrho(\varrho)b_k^2\varrho^{n}\dr-(Q-3)\int_{0}^{R}V(\varrho)b_k^2\varrho^{n-1}\dr.
	\end{align}
	Applying Theorem \ref{hardy-rell-iden}, for the radial function $b_k$, we deduce
	\begin{align*}
		\int_{0}^{R}V(\varrho) {b_{k}^{\prime}}^2\varrho^{n+1}\dr&= \int_{0}^{R}W(\varrho)b_k^2\varrho^{n+1}\dr+\int_{0}^R V(\varrho)f^2(\varrho) \left[\bigg(\frac{b_k}{f(\varrho)}\bigg)'\right]^2 \varrho^{n+1}\dr.
	\end{align*}
	Using this estimate into \eqref{nrad_rellich_2} and writing $b_k$ in terms of $d_k$, we have
	\begin{align*}
		\mathcal{I}&= \int_{0}^{R}W(\varrho)b_k^2\varrho^{n+1}\dr+\int_{0}^R V(\varrho)f^2(\varrho) \left[\bigg(\frac{b_k}{f(\varrho)}\bigg)'\right]^2 \varrho^{n+1}\dr\\&-\int_{0}^{R}V_\varrho(\varrho)b_k^2\varrho^{n}\dr-(Q-3)\int_{0}^{R}V(\varrho)b_k^2\varrho^{n-1}\dr\\&=\int_{0}^{R}W(\varrho)d_k^2\varrho^{n-1}\dr+\int_{0}^R V(\varrho)f^2(\varrho) \left[\bigg(\frac{d_k}{\varrho f(\varrho)}\bigg)'\right]^2 \varrho^{n+1}\dr\\&-\int_{0}^{R}V_\varrho(\varrho)d_k^2\varrho^{n-2}\dr-(Q-3)\int_{0}^{R}V(\varrho)d_k^2\varrho^{n-3}\dr.
	\end{align*}
	
	{\bf Step 6.} Next, using all these integrals and rewriting $\mathcal{R}$ as follows:
	\begin{align*}
		\mathcal{R}=&\sm\bigg[4\lambda_k (4\lambda_{k}-Q+1)\int_0^R V(\varrho)d_{k}^2 \varrho^{n-3} \dr - 8\lambda_k \mathcal{I}_2- 8\lambda_k (Q-1)\mathcal{I}_1\\
		&-4\lambda_{k}\int_{0}^R W(\varrho)d_k^2\varrho^{n-1}\dr-4\lambda_{k}\mathcal{I}
		+4\lambda_{k}(Q-1)\int_{0}^R V_\varrho(\varrho)d_k^2\varrho^{n-2}\dr\bigg]\\
		&=\sm\bigg[4\lambda_k (4\lambda_{k}-Q+1)\int_0^R V(\varrho)d_{k}^2 \varrho^{n-3} \dr - 8\lambda_k \biggl\{\frac{1}{2}\int_0^R V_{\varrho\varrho}(\varrho)  d_{k}^2 \varrho^{n-1}\dr\\
		&+\frac{(Q-3)}{2}\int_0^R V_{\varrho}(\varrho)  d_{k}^2 \varrho^{n-2}\dr-\mathcal{I}-(Q-3)\mathcal{I}_1\biggr\}- 8\lambda_k (Q-1)\mathcal{I}_1\\
		&-4\lambda_{k}\int_{0}^R W(\varrho)d_k^2\varrho^{n-1}\dr-4\lambda_{k}\mathcal{I}
		+4\lambda_{k}(Q-1)\int_{0}^R V_\varrho(\varrho)d_k^2\varrho^{n-2}\dr\bigg]\\
		&=\sm\bigg[4\lambda_k (4\lambda_{k}-Q+1)\int_0^R V(\varrho)d_{k}^2 \varrho^{n-3} \dr - 4\lambda_k\int_0^R V_{\varrho\varrho}(\varrho)  d_{k}^2 \varrho^{n-1}\dr\\
		&+8\lambda_{k}\int_0^R V_{\varrho}(\varrho)  d_{k}^2 \varrho^{n-2}\dr+4\lambda_{k}\mathcal{I}-16\lambda_k\mathcal{I}_1-4\lambda_{k}\int_{0}^R W(\varrho)d_k^2\varrho^{n-1}\dr\\
		&=\sm\bigg[4\lambda_k (4\lambda_{k}+Q-7)\int_0^R V(\varrho)d_{k}^2 \varrho^{n-3} \dr - 4\lambda_k\int_0^R V_{\varrho\varrho}(\varrho)  d_{k}^2 \varrho^{n-1}\dr\\
		&+16\lambda_{k}\int_0^R V_{\varrho}(\varrho)  d_{k}^2 \varrho^{n-2}\dr-4\lambda_{k}\int_{0}^R W(\varrho)d_k^2\varrho^{n-1}\dr+4\lambda_{k}\mathcal{I}
		\bigg].
	\end{align*}
	Now we will substitute the value of $\mathcal{I}$, and we obtain after further simplifying
	\begin{align*}
		\mathcal{R}&=\sm\bigg[16\lambda_k (\lambda_{k}-1)\int_0^R V(\varrho)d_{k}^2 \varrho^{n-3} \dr - 4\lambda_k\int_0^R V_{\varrho\varrho}(\varrho)  d_{k}^2 \varrho^{n-1}\dr\\
		&+12\lambda_{k}\int_0^R V_{\varrho}(\varrho)  d_{k}^2 \varrho^{n-2}\dr+4\lambda_{k}\int_{0}^R V(\varrho)f^2(\varrho) \left[\bigg(\frac{d_k}{\varrho f(\varrho)}\bigg)'\right]^2 \varrho^{n+1}\dr\bigg]\\
		&\geq \sm 4\lambda_{k}\int_0^R \bigg[4(\lambda_{k}-1) \frac{V(\varrho)}{\varrho^2}  - V_{\varrho\varrho}(\varrho) +3 \frac{V_{\varrho}(\varrho) }{\varrho} \bigg]d_{k}^2\varrho^{n-1} \dr.
	\end{align*}
	In the above, we remove the non-negative term due to $V\geq 0$. Also, we can notice that for $k\geq 1$ there holds
	\begin{align*}
		4(\lambda_{k}-1)=k(n+k)-4\geq (n+1)-4 = (Q-5).
	\end{align*}
	With the given condition \eqref{cond} and the above estimate we conclude the proof. $\qed$
	
	\medspace
	
	{\bf Proof of Corollary~\ref{cor-hr-r} and Corollary~\ref{cor-hr-r-rad}:} Applying $\big(1,\frac{(Q-2)^2}{4}\frac{1}{\varrho^2}\big)$ on Theorem \ref{nr-rell} as a $Q$-dimensional Bessel pair on $(0,\infty)$ with positive solution $\varrho^{-\frac{Q-2}{2}}$, part (a) follows. Now applying weighted Hardy inequality from Corollary \ref{wg-hardy}, with $\alpha=2$, we deduce
	\begin{align*}
		\int_{\rno} \frac{|\gradg u|^2}{\varrho^2}\dx\dt&-\frac{(Q-4)^2}{4}\int_{\rno}\frac{|u|^2|\gradg \varrho|^2}{\varrho^{4}}\dx\dt\\&=\int_{\rno}\varrho^{2-Q}\left|\gradg\left(u\varrho^{\frac{Q-4}{2}}\right)\right|^2\dx\dt.
	\end{align*}
	Clubbing these with part (a) we obtain our required Rellich inequality in part (b).
	
	\medskip
	
	To prove the Corollary~\ref{cor-hr-r-rad}, we will follow the same approach as Corollary \ref{cor-hr-r}, and utilize Theorem \ref{r-rell}. This completes the proof. $\qed$
	
	\medskip			
	
	\begin{remark}
		{\rm
			Using the polar coordinate decomposition and by parts formula, we can have the following identity: for the radial function $u\in C_c^\infty(\rno \setminus \{o\})$, there holds
			\begin{align*}
				\frac{Q^2}{4}\int_{\rno}\varrho^{2-Q}&\left|\gradg\left(u\varrho^{\frac{Q-4}{2}}\right)\right|^2\dx\dt
				+\int_{\rno} \varrho^{2-Q}\left|\nabla_{G}\left(u_\varrho \varrho^{\frac{Q-2}{2}}\right)\right|^{2} \dx\dt\\&= \int_{\rno} \bigg|\frac{\lapg u}{|\gradg \varrho|}+\frac{Q(Q-4)}{4}\frac{u |\gradg \varrho|}{\varrho^2}\bigg|^2\dx\dt \\& +\frac{Q(Q-4)}{2}\int_{\rno}\bigg|\frac{|\nabla_{G} u|}{\varrho|\gradg \varrho|}+\frac{(Q-4)}{2}\frac{u }{\varrho^2}\bigg|^2|\gradg \varrho|^2\dx\dt.	
			\end{align*}
			We can compare the corresponding Rellich identity with the radial Grushin operator inspired by \cite[Theorem 1.1]{mow}. For the sake of brevity, we omit the details. 
		}
	\end{remark}
	
	\medspace
	
	\section{Rellich identity and spherical harmonics via spherical derivatives: proof of Theorem~\ref{rellichspherical} }\label{sec-sph-deri}	
	This section is devoted to studying spherical harmonics via spherical derivatives. We shall explicitly compute the contribution of the radial and the spherical part of the Grushin operator.  Let us recall the spherical derivatives, for $j =1, \ldots, n$ we define $
	L_j=\frac{\partial}{\partial x_j}-\frac{\partial\varrho}{\partial x_j}\frac{\partial}{\partial \varrho}$ \mbox{and} 
	$L_{n+1}=|x|\frac{\partial}{\partial t}-|x|\frac{\partial\varrho}{\partial t}\frac{\partial}{\partial \varrho}.
	$
	Using polar coordinates, it is easy to check that \begin{equation}\label{0}
		\frac{\partial}{\partial \varrho}=\frac{1}{\varrho}\left(\sum_{j=1}^n x_j\frac{\partial}{\partial x_j}+2t\frac{\partial}{\partial t}\right).
	\end{equation}
	
	Now, we will state some useful identities involving $L_j$'s in the following lemma.
	\begin{lemma}\label{iden-lem}
		The following relations hold:
		\begin{itemize}
			\item[1.] There holds
			\begin{equation}\label{2}
				\sum_{j=1}^n x_j|x|^2L_j+2t|x|L_{n+1}=0.
			\end{equation}
			\item[2.] For all $j=1,2,\ldots,n+1$, we have
			\begin{equation}\label{3}
				L_j\frac{\partial}{\partial \varrho}=\frac{\partial}{\partial \varrho}L_j+\frac{1}{\varrho}L_j.
			\end{equation}
			\item[3.] We have the following identities
			\begin{equation}\label{1}
				\lapg = \rlapg +\sum_{j=1}^{n+1}L_j^2, \quad \text{ and } \quad \sum_{j=1}^{n+1}L_j^2=\frac{4\psi}{\varrho^2} \mathcal{L}_\sigma. 
			\end{equation}
			\item[4.] Let $f, \, g \in C_c^2(\rno)$. Then the following by-parts formula holds: when $j=1,\ldots,n$, we have
			$$\int_{\rno}g (L_jf)\dx\dt=-\int_{\rno}(L_jg){f}\dx\dt+(Q-1)\int_{\rno}g\frac{x_j|x|^2}{\varrho^4}{f} \dx\dt,$$
			and when $j=n+1$, we get 
			$$\int_{\rno}g(L_{n+1}f)\dx\dt=-\int_{\rno}(L_{n+1}g){f}\dx\dt+(Q-1)\int_{\rno}g\frac{2t|x|}{\varrho^4}{f} \dx\dt.$$
			\item[5.] Assume $f\in C^2(\rno)$. Then for all $j$ and $\alpha\in\mathbb{R}$, there holds $$L_j(\varrho^{\alpha}f)=\varrho^{\alpha}(L_jf).$$
		\end{itemize}
	\end{lemma}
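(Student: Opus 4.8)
The plan is to unify all five assertions through one device. Write $Z_j=\partial_{x_j}$ for $j=1,\dots,n$ and $Z_{n+1}=|x|\partial_t$, so that $\lapg=\sum_{j=1}^{n+1}Z_j^2$ and, abbreviating $a_j:=Z_j\varrho$, every spherical field factorises as $L_j=Z_j-a_j\,\partial_\varrho$. Two elementary facts will be used throughout. First, $L_j\varrho=Z_j\varrho-(Z_j\varrho)\,\partial_\varrho\varrho=0$ since $\partial_\varrho\varrho=1$; because $L_j$ is a first-order field obeying the Leibniz rule, this gives at once $L_j(\varrho^\alpha f)=\alpha\varrho^{\alpha-1}(L_j\varrho)f+\varrho^\alpha L_jf=\varrho^\alpha L_jf$, which is item~5. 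Second, by \eqref{0} the Euler field $R:=\varrho\,\partial_\varrho=\sum_i x_i\partial_{x_i}+2t\partial_t$ multiplies any $\delta_\lambda$-homogeneous function of degree $d$ by $d$; in particular each $a_j$ is homogeneous of degree $0$, whence $\partial_\varrho a_j=0$, a cancellation I will invoke repeatedly.

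For item~1, I substitute $a_j=x_j|x|^2/\varrho^3$ $(j\le n)$ and $a_{n+1}=2t|x|/\varrho^3$, so that the left side of \eqref{2} equals $\varrho^3\sum_j a_jL_j$; expanding $L_j=Z_j-a_j\partial_\varrho$ and summing, the part linear in the $Z_j$ reassembles into $|x|^2R$ via \eqref{0}, while the radial part is $-\varrho^{-3}|x|^2(|x|^4+4t^2)\partial_\varrho$. Since $|x|^4+4t^2=\varrho^4$ and $R=\varrho\partial_\varrho$, the two cancel, proving \eqref{2}; as a by-product $\sum_j a_jL_j=0$, which I record for later. Item~2 is the commutator relation $[L_j,\partial_\varrho]=\varrho^{-1}L_j$. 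Using $\partial_\varrho a_j=0$ one has $[a_j\partial_\varrho,\partial_\varrho]=-(\partial_\varrho a_j)\partial_\varrho=0$, so $[L_j,\partial_\varrho]=[Z_j,\partial_\varrho]$; writing $\partial_\varrho=\varrho^{-1}R$ and using the elementary commutators $[\partial_{x_j},R]=\partial_{x_j}$, $[\partial_t,R]=2\partial_t$ together with $R|x|=|x|$, a short computation collapses $[Z_j,\partial_\varrho]$ exactly to $\varrho^{-1}(Z_j-a_j\partial_\varrho)=\varrho^{-1}L_j$ for every $j$.

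Item~3 is the heart of the lemma and the step I expect to require the most care. Expanding as operators and dropping the $\partial_\varrho a_j$ terms by homogeneity gives $Z_j^2=L_j^2+(L_ja_j)\partial_\varrho+a_j\big(L_j\partial_\varrho+\partial_\varrho L_j\big)+a_j^2\partial_\varrho^2$. Summing over $j$, three cancellations conspire: $\sum_j a_j^2=\psi$ (a one-line computation again using $\varrho^4=|x|^4+4t^2$) produces the $\psi\partial_\varrho^2$ term; the symmetrised first-order bracket vanishes because $\sum_j a_jL_j=0$ from item~1, $L_j\partial_\varrho+\partial_\varrho L_j=2\partial_\varrho L_j+\varrho^{-1}L_j$ from item~2, and $\partial_\varrho$ commutes with each $a_j$; finally the scalar coefficient is $\sum_j L_ja_j=\sum_j Z_j(Z_j\varrho)=\lapg\varrho=(Q-1)\psi/\varrho$. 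The surviving operator is precisely $\rlapg=\psi\big(\partial_\varrho^2+\tfrac{Q-1}{\varrho}\partial_\varrho\big)$, giving $\lapg=\rlapg+\sum_jL_j^2$; subtracting this from the polar form \eqref{grushin-polar} of $\lapg$ then yields $\sum_jL_j^2=\tfrac{4\psi}{\varrho^2}\mathcal{L}_\sigma$ for free.

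Finally, item~4 is a divergence computation. For any smooth field the formal $L^2$-adjoint with respect to $\dx\dt$ is $-L_j-\operatorname{div}L_j$, so it suffices to compute $\operatorname{div}L_j$. Expressing $L_j$ in Cartesian form through $\partial_\varrho=\varrho^{-1}R$ and differentiating the coefficients, the computation reduces to $\operatorname{div}L_j=-\big(R+n+2\big)\big(x_j|x|^2/\varrho^4\big)$ for $j\le n$ (and the analogue with $2t|x|/\varrho^4$ for $j=n+1$); since $x_j|x|^2/\varrho^4$ is homogeneous of degree $-1$, one has $R(x_j|x|^2/\varrho^4)=-x_j|x|^2/\varrho^4$, so $\operatorname{div}L_j=-(n+1)x_j|x|^2/\varrho^4=-(Q-1)x_j|x|^2/\varrho^4$. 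Feeding this into the adjoint formula produces exactly the two stated by-parts identities. The only genuine obstacle is the bookkeeping in item~3: one must keep the operator products in the correct order and use items~1 and~2 to see every non-radial contribution cancel, leaving a purely radial operator.
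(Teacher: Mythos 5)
Your proof is correct, and on the substantive items it takes a genuinely different route from the paper's, although both ultimately rest on the same two facts: the Euler identity \eqref{0} and the degree-zero $\delta_\lambda$-homogeneity of the coefficients $a_j=Z_j\varrho$ (which the paper phrases as ``$\partial\varrho/\partial x_j$ and $|x|\,\partial\varrho/\partial t$ are independent of $\varrho$''). For item~3 the paper works forwards: it expands each $L_j^2$ in full, with all the explicit singular coefficients, reaches the intermediate identity \eqref{sphex} with a correction operator $D$, and then simplifies $D=2|\gradg\varrho|^2\partial_\varrho^2$. You work backwards, expanding $\lapg=\sum_j(L_j+a_j\partial_\varrho)^2$ and letting the operator forms of items~1 and~2 --- $\sum_j a_jL_j=0$ and $[L_j,\partial_\varrho]=\varrho^{-1}L_j$ --- together with $\partial_\varrho a_j=0$ kill the cross terms, while $\sum_j a_j^2=\psi$ and $\sum_j L_ja_j=\lapg\varrho=(Q-1)\psi/\varrho$ (the latter recorded in the paper's preliminaries) reassemble the remainder into $\rlapg$; I verified the expansion $Z_j^2=L_j^2+(L_ja_j)\partial_\varrho+a_j\bigl(L_j\partial_\varrho+\partial_\varrho L_j\bigr)+a_j^2\partial_\varrho^2$ and the three cancellations, including your (necessary) remark that $a_j\partial_\varrho=\partial_\varrho\circ a_j$ as operators before invoking $\sum_j a_jL_j=0$, and all are exact. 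For item~4 the paper integrates by parts once in $x_j$ and once in $\varrho$ in polar coordinates, whereas you compute the Cartesian divergence $\operatorname{div}L_j=-(R+n+2)c$ with $c=x_j|x|^2/\varrho^4$ (resp.\ $2t|x|/\varrho^4$) and evaluate $Rc=-c$ by degree $-1$ homogeneity, giving $\operatorname{div}L_j=-(Q-1)c$ and the stated formulas via the adjoint $-L_j-\operatorname{div}L_j$; the two computations agree. For item~2 the paper expands the commutator by hand and leaves $j=n+1$ to the reader, while your Euler-field calculus ($[\partial_{x_j},R]=\partial_{x_j}$, $[\partial_t,R]=2\partial_t$, $R|x|=|x|$) handles all $j$ uniformly. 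What your organization buys: all five items flow from $L_j\varrho=0$ and $\partial_\varrho a_j=0$, the singular expansions of the individual $L_j^2$ never appear, and $j=n+1$ needs no separate treatment; what the paper's buys is explicit formulas for each $L_j^2$ separately, which are not used elsewhere. The one sentence I would add, to match the paper's implicit standard in item~4, is that the coefficients of $L_j$ are bounded near the origin, so for $f,g\in C_c^2(\rno)$ the adjoint/integration-by-parts formula incurs no boundary contribution at $o$ --- a point the paper's polar-coordinate proof also relies on tacitly.
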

	\begin{proof}
		Let us begin the proof of part (1). Using the definition of $L_j$'s and \eqref{0}, we have
		\begin{align*}
			\sum_{j=1}^n x_j|x|^2L_j+2t|x|L_{n+1}&=\sum_{j=1}^n x_j|x|^2\frac{\partial}{\partial x_j}-\sum_{j=1}^n x_j|x|^2\frac{\partial\varrho}{\partial x_j}\frac{\partial}{\partial \varrho}+2t|x|^2\frac{\partial}{\partial t}-2t|x|^2\frac{\partial\varrho}{\partial t}\frac{\partial}{\partial \varrho}\\&=\varrho|x|^2\frac{\partial}{\partial \varrho}-|x|^2 \left(\sum_{j=1}^n \frac{x_j^2|x|^2}{\varrho^3}+\frac{4t^2}{\varrho^3}\right)\frac{\partial}{\partial \varrho}=0.
		\end{align*}
		
		\medspace
		
		Now we will prove part (2). Using \eqref{0} and the fact that $\frac{\partial\varrho}{\partial x_j};j=1,2, \ldots, n$, is independent of $\varrho$, we can notice that
		\begin{align*}
			L_j\frac{\partial}{\partial \varrho}-\frac{\partial}{\partial \varrho}L_j&=\left(\frac{\partial }{\partial x_j}\right)\left(\sum_{k=1}^n \frac{x_k}{\varrho}\frac{\partial}{\partial x_k}+\frac{2t}{\varrho}\frac{\partial}{\partial t}\right)-\left(\sum_{k=1}^n \frac{x_k}{\varrho}\frac{\partial}{\partial x_k}+\frac{2t}{\varrho}\frac{\partial}{\partial t}\right)\left(\frac{\partial }{\partial x_j}\right)\\&=\sum_{k=1}^n\frac{x_k}{\varrho}\frac{\partial^2}{\partial x_j\partial x_k}+\frac{2t}{\varrho}\frac{\partial}{\partial x_j}\frac{\partial}{\partial t}-\sum_{k=1}^n\frac{x_k}{\varrho}\frac{\partial^2}{\partial x_k\partial x_j}-\frac{2t}{\varrho}\frac{\partial}{\partial t}\frac{\partial}{\partial x_j}\\&+\frac{1}{\varrho}\frac{\partial }{\partial x_j}-\sum_{k=1}^n\frac{x_kx_j|x|^2}{\varrho^5}\frac{\partial}{\partial x_k}-\frac{2tx_j|x|^2}{\varrho^5}\frac{\partial }{\partial t}\\&=\frac{1}{\varrho}\bigg[ \frac{\partial }{\partial x_j}-\frac{\partial \varrho}{\partial x_j}\bigg(\sum_{k=1}^n\frac{x_k}{\varrho}\frac{\partial}{\partial x_k}+\frac{2t}{\varrho}\frac{\partial }{\partial t}\bigg) \bigg].
		\end{align*}
		Now, using \eqref{0}, and writing back the definition of $L_j$, we obtain the result. The argument for $j=n+1$ is left to the reader.
		
		\medspace
		
		Now we will prove part (3).  Let us calculate $L_j^2$ first. For $j=1,2,\ldots,n$, we have 
		\begin{align*}
			L_j^2=\frac{\partial^2}{\partial x_j^2}	-\frac{\partial\varrho}{\partial x_j}\frac{\partial}{\partial\varrho}\frac{\partial}{\partial x_j}-\frac{\partial}{\partial x_j}\left(\frac{\partial\varrho}{\partial x_j}\frac{\partial}{\partial\varrho}\right)+ \frac{\partial\varrho}{\partial x_j}\frac{\partial}{\partial\varrho}\left(\frac{\partial\varrho}{\partial x_j}\frac{\partial}{\partial\varrho}\right).
		\end{align*}
		Using the fact that $\frac{\partial\varrho}{\partial x_j};j=1,2, \ldots, n$, and $|x|\frac{\partial\varrho}{\partial t}$ is independent of $\varrho$ and further simplifying the above expression of $L_j^2$ we get
		\begin{align*}
			L_j^2=\frac{\partial^2}{\partial x_j^2}+\left(\frac{\partial\varrho}{\partial x_j}\right)^2\frac{\partial^2}{\partial\varrho^2}-\frac{|x|^2}{\varrho^3}-\frac{2x_j^2}{\varrho^3}+\frac{3x_j^2|x|^4}{\varrho^7}	\frac{\partial}{\partial\varrho}-\frac{\partial\varrho}{\partial x_j}\frac{\partial}{\partial x_j}\frac{\partial}{\partial \varrho}-\frac{\partial\varrho}{\partial x_j}\frac{\partial}{\partial \varrho}\frac{\partial}{\partial x_j}.	
		\end{align*}
		Similarly, we can compute that
		\begin{align*}
			L_{n+1}^2=|x|^2\frac{\partial^2}{\partial t^2}+|x|^2\left(\frac{\partial\varrho}{\partial t}\right)^2\frac{\partial^2}{\partial\varrho^2}	-\frac{2|x|^2}{\varrho^3}\frac{\partial}{\partial\varrho}+\frac{12|x|^2t}{\varrho^7}\frac{\partial}{\partial\varrho}\\-|x|^2\frac{\partial\varrho}{\partial t}\frac{\partial}{\partial t}\frac{\partial}{\partial \varrho}-|x|^2\frac{\partial\varrho}{\partial t}\frac{\partial}{\partial\varrho}\frac{\partial}{\partial t}-\frac{|x|^2}{\varrho}\frac{\partial\varrho}{\partial t}\frac{\partial}{\partial t}.
		\end{align*}
		Then, taking the summation of the above identities from $j=1,2,\ldots, n+1$, we get
		\begin{equation}\label{sphex}
			\sum_{j=1}^{n+1}L_j^2=	\lapg +|\gradg\varrho|^2\frac{\partial^2}{\partial\varrho^2}-\frac{n+1}{\varrho}|\gradg\varrho|^2\frac{\partial}{\partial\varrho}-D,
		\end{equation}
		where
		\begin{align*}
			D=\sum_{j=1}^n\frac{\partial\varrho}{\partial x_j}\frac{\partial}{\partial x_j}\frac{\partial}{\partial \varrho}+|x|^2\frac{\partial\varrho}{\partial t}\frac{\partial}{\partial t}\frac{\partial}{\partial \varrho}+\sum_{j=1}^n\frac{\partial\varrho}{\partial x_j}\frac{\partial}{\partial \varrho}\frac{\partial}{\partial x_j}+|x|^2\frac{\partial\varrho}{\partial t}\frac{\partial}{\partial\varrho}\frac{\partial}{\partial t}+\frac{|x|^2}{\varrho}\frac{\partial\varrho}{\partial t}\frac{\partial}{\partial t}	.
		\end{align*}
		Using the identity \eqref{0}, and the fact that $\frac{x_j}{\varrho}$, and $\frac{|x|^2}{\varrho^2}$ are $\varrho$ independent, we obtain
		\begin{align*}
			\sum_{j=1}^n\frac{\partial\varrho}{\partial x_j}\frac{\partial}{\partial x_j}\frac{\partial}{\partial \varrho}+|x|^2\frac{\partial\varrho}{\partial t}\frac{\partial}{\partial t}\frac{\partial}{\partial \varrho}=|\gradg \varrho|^2\frac{\partial^2}{\partial \varrho^2},
		\end{align*} 
		and
		\begin{align*}
			\sum_{j=1}^n\frac{\partial\varrho}{\partial x_j}\frac{\partial}{\partial \varrho}\frac{\partial}{\partial x_j}=\frac{|x|^2}{\varrho^2}\frac{\partial}{\partial \varrho}\left(\frac{\partial}{\partial \varrho}-\frac{2t}{\varrho}\frac{\partial }{\partial t}\right)=|\gradg \varrho|^2\frac{\partial^2}{\partial \varrho^2}-\frac{\partial}{\partial\varrho}\left(|x|^2\frac{\partial\varrho}{\partial t}\frac{\partial}{\partial t}\right).
		\end{align*}
		Again noticing that $|x|\frac{\partial\varrho}{\partial t}$ is independent of $\varrho$, and using \eqref{0} to compute $\frac{\partial }{\partial \varrho}(|x|)$, we obtain
		\begin{align*}
			\frac{\partial}{\partial\varrho}\left(|x|^2\frac{\partial\varrho}{\partial t}\frac{\partial}{\partial t}\right)=\frac{|x|^2}{\varrho}\frac{\partial\varrho}{\partial t}\frac{\partial}{\partial t}	+|x|^2\frac{\partial\varrho}{\partial t}\frac{\partial}{\partial\varrho}\frac{\partial}{\partial t}.
		\end{align*}
		Combining all these in the expression of $D$, it simplifies to $D=2|\gradg \varrho|^2\frac{\partial^2}{\partial\varrho^2}$. Putting this expression of $D$ in \eqref{sphex} we get that $$\lapg = \rlapg +\sum_{j=1}^{n+1}L_j^2.$$ Using the expression of the Grushin operator in \eqref{grushin-polar}, we can obtain the second identity of \eqref{1} and this completes the proof of part (3).
		
		\medspace
		
		Now we will prove part (4) of the lemma. This follows from the definition of $L_j$'s and the polar coordinate decomposition formula. Exactly similar arguments hold for $j=n+1$, and for simplicity, we are just writing the case when $j=1,2,\ldots,n$. Exploiting integration by parts, first w.r.t. $x_j$, and then w.r.t. $\varrho$, and using the fact that $\frac{\partial\varrho}{\partial x_j}$ is independent of $\varrho$, we have
		\begin{align*}
			\int_{\rno}g (L_jf)\dx\dt&=-\int_{\rno}\frac{\partial g}{\partial x_j} \, f\dx\dt -\int_{\Omega}\int_{0}^{\infty}g\frac{\partial\varrho}{\partial x_j}\frac{\partial f}{\partial \varrho}\frac{\varrho^{Q-1}}{2\psi}\dr\dsn\\&=-\int_{\rno}(L_jg){f}\dx\dt+(Q-1)\int_{\rno}\frac{g}{\varrho}\frac{\partial\varrho}{\partial x_j}f \dx\dt.
		\end{align*}
		
		\medspace
		
		In this part, we will prove part (5). Notice that for any $j=1,2,\ldots,n$, we have
		\begin{align*}
			L_j (\varrho^\alpha f )=\frac{\partial \varrho^\alpha}{\partial x_j} f+\varrho^\alpha\frac{\partial f}{\partial x_j}-\frac{\partial\varrho}{\partial x_j}\frac{\partial \varrho^\alpha}{\partial \varrho} f - \frac{\partial\varrho}{\partial x_j} \varrho^\alpha \frac{\partial f}{\partial \varrho}=\varrho^\alpha (L_j f).
		\end{align*}
		A similar argument holds for $j=n+1$. This finishes the proof of the lemma.
	\end{proof}
	
	Now we begin the proof of  Theorem~\ref{rellichspherical}.
	
	{\bf Proof of Theorem~\ref{rellichspherical}:} Using \eqref{1} we can write 
	\begin{align*}
		\int_{\rno}\frac{|\mathcal{L}_{G} u|^2}{|\gradg \varrho|^2}\dx \dt= \int_{\rno}\frac{|\rlapg u|^2}{|\gradg \varrho|^2}\dx \dt	+\int_{\rno}\frac{|\sum_{j=1}^{n+1}L_j^2 u|^2}{|\gradg \varrho|^2}\dx \dt\\
		+2~\left(\sum_{j=1}^{n+1}\int_{\rno}|\gradg \varrho|^{-2}\rlapg u {L_j^2u}\dx\dt\right).
	\end{align*}
	We know that $$|\gradg \varrho|^{-2}\rlapg u =\frac{\partial^2 u}{\partial\varrho^2}+\frac{Q-1}{\varrho}\frac{\partial u}{\partial\varrho}.$$
	
	Define $L_ju=f_j$ and we write $\frac{\partial^2 u}{\partial\varrho^2}+\frac{Q-1}{\varrho}\frac{\partial u}{\partial\varrho}=g$, then part (4) of Lemma~\ref{iden-lem} gives $$\int_{\rno}g{L_jf_j}\dx\dt=-\int_{\rno}L_jg{f_j}\dx\dt+(Q-1)\int_{\rno}g\frac{x_j|x|^2}{\varrho^4}{f_j} \dx\dt$$ for $j=1,2,\ldots, n$, and for $j=n+1$, we have
	$$\int_{\rno}g{L_{n+1}f_{n+1}}\dx\dt=-\int_{\rno}L_{n+1}g{f_{n+1}}\dx\dt+(Q-1)\int_{\rno}g\frac{2t|x|}{\varrho^4}{f_{n+1}} \dx\dt.$$
	
	Now summing over $j$ and using \eqref{2} gives 
	\begin{equation}\label{4}
		\sum_{j=1}^{n+1}\int_{\rno}|\gradg \varrho|^{-2}\rlapg u {L_ju}\dx\dt=-\sum_{j=1}^{n+1}\int_{\rno}L_jg{f_j}\dx\dt.
	\end{equation}
	
	By the definition of $g$ above it can be rewritten as $g=\varrho^{-(Q-1)}\frac{\partial}{\partial\varrho}\left(\varrho^{Q-1}\frac{\partial u}{\partial\varrho}\right)$. Using \eqref{3} and part (5) of Lemma~\ref{iden-lem}, we can write $$L_jg=\frac{\partial^2 f_j}{\partial \varrho^2}+\frac{Q+1}{\varrho}\frac{\partial f_j}{\partial \varrho}+\frac{Q-1}{\varrho^2}f_j.$$
	
	For a fixed $j$ we have \begin{align*}\int_{\rno}L_jg{f_j}\dx\dt=\int_{\rno}\frac{\partial^2 f_j}{\partial \varrho^2}f_j+\frac{Q+1}{\varrho}\frac{\partial f_j}{\partial \varrho}f_j+\frac{Q-1}{\varrho^2}f_j^2\dx\dt.
	\end{align*}
	
	Using integration by parts gives \begin{align*}\int_{\rno}\frac{\partial^2 f_j}{\partial \varrho^2}f_j+\frac{Q+1}{\varrho}\frac{\partial f_j}{\partial \varrho}f_j\dx\dt=-\int_{\rno}\left(\frac{\partial f_j}{\partial\varrho}\right)^2\dx\dt\\-\frac{(Q+1)(Q-2)}{2}\int_{\rno}\frac{f_j^2}{\varrho^2}\dx\dt+\frac{(Q-1)(Q-2)}{2}\int_{\rno}\frac{f_j^2}{\varrho^2}\dx\dt.\end{align*}
	Summing over $j$ gives
	$$\sum_{j=1}^{n+1}\int_{\rno}L_jg{f_j}\dx\dt=-\sum_{j=1}^{n+1}\left(\int_{\rno}\left|\frac{\partial f_j}{\partial\varrho}\right|^2\dx\dt-\int_{\rno}\frac{|f_j|^2}{\varrho^2}\dx\dt\right).$$
	Therefore, \eqref{4} reduces to		  
	$$\sum_{j=1}^{n+1}\int_{\rno}|\gradg \varrho|^{-2}\rlapg u {L_ju}\dx\dt=\sum_{j=1}^{n+1}\left(\int_{\rno}\left|\frac{\partial f_j}{\partial\varrho}\right|^2\dx\dt-\int_{\rno}\frac{|f_j|^2}{\varrho^2}\dx\dt\right).$$
	
	Using the identity for $\rgradg$ when $\alpha=0$, in Corollary \ref{wg-hardy} for $|\gradg \varrho|^{-1}L_ju$, $j=1\ldots n$, we get the desired equality. $\qed$
	
	\medskip	
	
	Now we want to measure the following deficit
	\begin{align*}
		\bigg|\bigg|\frac{\lapg u}{\psi^{1/2}}\bigg|\bigg|_{L^2(\rno)}-\quad	\bigg|\bigg|\frac{\rlapg u}{\psi^{1/2}}\bigg|\bigg|_{L^2(\rno)},
	\end{align*}
	where $u\in C_c^\infty(\rno\setminus\{o\})$. Here, we will calculate the exact identity in terms of spherical harmonics. This can be compared with the second part of the result \cite[Theorem 1]{bmo}. Our result can be stated as follows.
	\begin{theorem}\label{rellichprojection}
		Let $Q\geq 4$ and $u\in C_c^\infty(\rno\setminus\{o\})$. Then there holds
		\begin{align*}
			&\norm\frac{\lapg u}{\psi^{1/2}}\norm^2_{L^2(\rno)}=\quad\norm\frac{\rlapg u}{\psi^{1/2}}\norm^2_{L^2(\rno)} + 16\sm \lambda_k^2\norm\frac{(\mathcal{P}_k u)\psi^{1/2}}{\varrho^2}\norm^2_{L^2(\rno)}\\&+8\sm \lambda_k\norm\frac{(\mathcal{P}_k u_\varrho)\psi^{1/2}}{\varrho}\norm^2_{L^2(\rno)}+8(Q-4)\sm \lambda_k\norm\frac{(\mathcal{P}_k u)\psi^{1/2}}{\varrho^2}\norm^2_{L^2(\rno)}.
		\end{align*}
		In the above $\varrho$ in suffix means the derivative with respect to the radial part $\varrho$.
	\end{theorem}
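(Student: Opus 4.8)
The plan is to diagonalise both operators against the spherical harmonic basis and reduce the whole computation to one--dimensional integrals in $\varrho$. Writing $u(\varrho,\sigma)=\sm d_k(\varrho)\Phi_k(\sigma)$, recalling the polar form \eqref{grushin-polar} together with $-\mathcal{L}_\sigma\Phi_k=\lambda_k\Phi_k$, I would first record
\[
\frac{\lapg u}{\psi^{1/2}}=\psi^{1/2}\sm\Big(d_k''+\tfrac{Q-1}{\varrho}d_k'-\tfrac{4\lambda_k}{\varrho^2}d_k\Big)\Phi_k,\qquad \frac{\rlapg u}{\psi^{1/2}}=\psi^{1/2}\sm\Big(d_k''+\tfrac{Q-1}{\varrho}d_k'\Big)\Phi_k.
\]
Since the polar measure is $\tfrac{\varrho^{n+1}}{2\psi}\dr\dsn$ and $|\gradg\varrho|^{2}=\psi$, the weight $\psi^{-1}$ from the square cancels against the $\psi$ from $(\lapg u)^2=\psi^2\{\cdots\}^2$, while orthonormality of $\{\Phi_k\}$ in $L^2(\Omega,\dsn)$ collapses the double sum, leaving
\[
\norm\frac{\lapg u}{\psi^{1/2}}\norm^2_{L^2(\rno)}=\frac12\sm\izf\Big(d_k''+\tfrac{Q-1}{\varrho}d_k'-\tfrac{4\lambda_k}{\varrho^2}d_k\Big)^2\varrho^{n+1}\dr.
\]

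Next I would expand the square as $(A-B)^2$ with $A=d_k''+\tfrac{Q-1}{\varrho}d_k'$ and $B=\tfrac{4\lambda_k}{\varrho^2}d_k$. The $A^2$ piece, summed over $k$, reassembles by the same reduction run backwards into $\norm\tfrac{\rlapg u}{\psi^{1/2}}\norm^2_{L^2(\rno)}$; the $B^2$ piece $\tfrac{16\lambda_k^2}{\varrho^4}d_k^2$ integrates directly and will furnish the $\lambda_k^2$--term. The remaining cross term is
\[
-4\sm\lambda_k\izf\Big(d_kd_k''\,\varrho^{n-1}+(Q-1)d_kd_k'\,\varrho^{n-2}\Big)\dr,
\]
and this is where the real work lies.

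Integrating $\izf d_kd_k''\varrho^{n-1}\dr$ by parts gives $-\izf(d_k')^2\varrho^{n-1}\dr-(n-1)\izf d_kd_k'\varrho^{n-2}\dr$, all boundary terms vanishing because each $d_k$ has compact support in $(0,\infty)$. Combining with the $(Q-1)$ contribution and using $Q=n+2$ collapses the first--order terms: the coefficient of $\izf d_kd_k'\varrho^{n-2}\dr$ becomes $Q-n=2$, and a further integration by parts turns $2\izf d_kd_k'\varrho^{n-2}\dr$ into $-(Q-4)\izf d_k^2\varrho^{n-3}\dr$. Thus the cross term reduces to $4\sm\lambda_k\izf(d_k')^2\varrho^{n-1}\dr+4(Q-4)\sm\lambda_k\izf d_k^2\varrho^{n-3}\dr$. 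The only delicate point in the whole argument is precisely this bookkeeping of the weights $\varrho^{n-1},\varrho^{n-2}$ and the consistent substitution $n-2=Q-4$, $Q-n=2$.

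Finally I would translate each one--dimensional integral back into a norm on $\rno$ via the polar formula, $\int_\Omega\Phi_k^2\dsn=1$, and the identities $\mathcal{P}_ku=d_k\Phi_k$, $\mathcal{P}_ku_\varrho=d_k'\Phi_k$. These yield $\izf\tfrac{d_k^2}{\varrho^4}\varrho^{n+1}\dr=2\norm\tfrac{(\mathcal{P}_ku)\psi^{1/2}}{\varrho^2}\norm^2_{L^2(\rno)}$, $\izf(d_k')^2\varrho^{n-1}\dr=2\norm\tfrac{(\mathcal{P}_ku_\varrho)\psi^{1/2}}{\varrho}\norm^2_{L^2(\rno)}$ and $\izf d_k^2\varrho^{n-3}\dr=2\norm\tfrac{(\mathcal{P}_ku)\psi^{1/2}}{\varrho^2}\norm^2_{L^2(\rno)}$. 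Feeding these into the $B^2$--term ($\tfrac12\cdot16\cdot2$ upgrades $8$ to the stated $16\sm\lambda_k^2$) and into the two cross--term pieces ($4\cdot2=8$ and $4(Q-4)\cdot2=8(Q-4)$) reproduces exactly the four terms of the claimed identity, completing the proof.
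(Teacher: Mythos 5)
Your proposal is correct and follows essentially the same route as the paper: both expand $u$ in spherical harmonics, reduce $\bigl\|\lapg u/\psi^{1/2}\bigr\|^2$ via orthonormality to the one-dimensional sums $\tfrac12\sum_k\int_0^\infty\bigl(d_k''+\tfrac{Q-1}{\varrho}d_k'-\tfrac{4\lambda_k}{\varrho^2}d_k\bigr)^2\varrho^{n+1}\dr$, identify the $A^2$ part with the radial operator and the $B^2$ part with the $\lambda_k^2$-term, and handle the cross term by exactly the two integrations by parts you describe (your grouping $Q-n=2$ is just a reshuffling of the paper's use of $n-1=Q-3$, $n-2=Q-4$, with identical outcome). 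All coefficient bookkeeping, including the factor-$2$ conversions between radial integrals and $L^2(\rno)$-norms, matches the paper's proof.
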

	\begin{proof}
		Consider the decomposition $u(\varrho,\sigma)=\sum_{k=0}^{\infty}\mathcal{P}_k u$ where $\mathcal{P}_k u=d_{k}(\varrho)\Phi_k(\sigma)$. Then let us write the Grushin operator as follows
		\begin{align*}
			\lapg u&=\psi \biggl\{\frac{\partial^2 u}{\partial \varrho^2}+\frac{Q-1}{\varrho}\frac{\partial u}{\partial \varrho}+\frac{4}{\varrho^2}\mathcal{L}_\sigma u\biggr\}\\&=\rlapg u + \frac{4\psi}{\varrho^2}\mathcal{L}_\sigma\bigg( \sum_{k=0}^{\infty}d_{k}(\varrho)\Phi_k(\sigma)\bigg)\\&=\rlapg u - \frac{4\psi}{\varrho^2} \sum_{k=0}^{\infty}\lambda_k d_{k}(\varrho)\Phi_k(\sigma).
		\end{align*}
		If we now expand it then we deduce using orthonormality properties that
		\begin{align*}
			\norm\frac{\lapg u}{\psi^{1/2}}\norm^2_{L^2(\rno)}&=\norm\frac{\rlapg u}{\psi^{1/2}}\norm^2_{L^2(\rno)}+\bigg|\bigg|\frac{4\psi^{1/2}}{\varrho^2} \sum_{k=0}^{\infty}\lambda_k d_{k}(\varrho)\Phi_k(\sigma)\bigg|\bigg|^2_{L^2(\rno)}\\&-2\left\langle\frac{\rlapg u}{\psi^{1/2}},\frac{4\psi^{1/2}}{\varrho^2} \sum_{k=0}^{\infty}\lambda_k d_{k}(\varrho)\Phi_k(\sigma)\right\rangle_{L^2(\rno)}.
		\end{align*}
		Now we will compute each term separately. Let us begin with
		\begin{align*}
			&\bigg|\bigg|\frac{4\psi^{1/2}}{\varrho^2} \sum_{k=0}^{\infty}\lambda_k d_{k}(\varrho)\Phi_k(\sigma)\bigg|\bigg|^2_{L^2(\rno)}\\&=\int_{\Omega}\int_{0}^{\infty}\bigg(\frac{4\psi^{1/2}}{\varrho^2} \sum_{k=0}^{\infty}\lambda_k d_{k}(\varrho)\Phi_k(\sigma)\bigg)\bigg(\frac{4\psi^{1/2}}{\varrho^2} \sum_{k=0}^{\infty}\lambda_k d_{k}(\varrho)\Phi_k(\sigma)\bigg)\frac{\varrho^{n+1}}{2\psi}\dr\dsn\\&=\sm 8 \lambda_k^2 \int_{0}^\infty \varrho^{n-3}d_{k}^2\dr,
		\end{align*}
		and
		\begin{align*}
			&\left\langle \frac{\rlapg u}{\psi^{1/2}},\frac{4\psi^{1/2}}{\varrho^2} \sum_{k=0}^{\infty}\lambda_k d_{k}(\varrho)\Phi_k(\sigma)\right\rangle_{L^2(\rno)}\\&=\int_{\Omega}\int_{0}^{\infty}\bigg(\psi^{1/2}\sm\biggl\{\frac{\partial^2 d_{k}}{\partial \varrho^2}+\frac{Q-1}{\varrho}\frac{\partial d_{k}}{\partial \varrho}\biggr\}\Phi_k\bigg)\bigg(\frac{4\psi^{1/2}}{\varrho^2} \sum_{k=0}^{\infty}\lambda_k d_{k}\Phi_k\bigg)\frac{\varrho^{n+1}}{2\psi}\dr\dsn\\&=\sm 2 \int_{0}^\infty \varrho^{n-1} \lambda_k \big[d_{k}^{\prime\prime}+\frac{Q-1}{\varrho}d_{k}^\prime\big] d_{k} \dr\\&=2\sm \lambda_k\int_{0}^\infty\varrho^{n-1}d_{k}^{\prime\prime}d_{k}\dr+2(Q-1)\sm \lambda_k \int_{0}^\infty\varrho^{n-2}d_{k}^{\prime}d_{k}\dr.
		\end{align*}
		Notice that,
		\begin{align*}
			\int_{0}^\infty\varrho^{n-2}d_{k}^{\prime}d_{k}\dr=-\frac{(Q-4)}{2}\int_{0}^\infty\varrho^{n-3}d_{k}^2\dr,
		\end{align*}
		and
		\begin{align*}
			\int_{0}^\infty\varrho^{n-1}d_{k}^{\prime\prime}d_{k}\dr&=-\int_{0}^\infty\varrho^{n-1}(d_{k}^{\prime})^2\dr-(Q-3)\int_{0}^\infty\varrho^{n-2}d_{k}^{\prime}d_{k}\dr\\&=-\int_{0}^\infty\varrho^{n-1}(d_{k}^{\prime})^2\dr+\frac{(Q-3)(Q-4)}{2}\int_{0}^\infty\varrho^{n-3}d_{k}^2\dr.
		\end{align*}
		Combining all these we deduce
		\begin{align*}
			&\left\langle \frac{\rlapg u}{\psi^{1/2}},\frac{4\psi^{1/2}}{\varrho^2} \sum_{k=0}^{\infty}\lambda_k d_{k}(\varrho)\Phi_k(\sigma)\right\rangle_{L^2(\rno)}\\&=-2\sm \lambda_k\int_{0}^\infty\varrho^{n-1}(d_{k}^{\prime})^2\dr - 2(Q-4)\sm\lambda_k\int_{0}^\infty\varrho^{n-3}d_{k}^2\dr.
		\end{align*}
		Hence we derive the identity
		\begin{align*}
			\norm\frac{\lapg u}{\psi^{1/2}}\norm^2_{L^2(\rno)} & - \quad \norm\frac{\rlapg u}{\psi^{1/2}}\norm^2_{L^2(\rno)}\\&=\sm 8 \lambda_k^2 \int_{0}^\infty \varrho^{n-3}d_{k}^2\dr+4\sm \lambda_k\int_{0}^\infty\varrho^{n-1}(d_{k}^{\prime})^2\dr \\& \qquad+ 4(Q-4)\sm\lambda_k\int_{0}^\infty\varrho^{n-3}d_{k}^2\dr\\&=16\sm \lambda_k^2\norm\frac{d_{k}\Phi_k\psi^{1/2}}{\varrho^2}\norm^2_{L^2(\rno)}+8\sm \lambda_k\norm\frac{d_{k}^\prime\Phi_k\psi^{1/2}}{\varrho}\norm^2_{L^2(\rno)}\\& \qquad +8(Q-4)\sm \lambda_k\norm\frac{d_{k}\Phi_k\psi^{1/2}}{\varrho^2}\norm^2_{L^2(\rno)}.
		\end{align*}
		Now, writing back for the function $u,$ we obtain the identity.
	\end{proof}
	
	\begin{remark}
		{\rm
			Let $Q\geq 4$ and $u\in C_c^\infty(\rno\setminus\{o\})$. Then writing in terms of the given function and using the estimate $\lambda_k\geq (Q-1)/4$, there holds
			\begin{multline*}
				\norm\frac{\lapg u}{\psi^{1/2}}\norm^2_{L^2(\rno)}\geq \quad\norm\frac{\rlapg u}{\psi^{1/2}}\norm^2_{L^2(\rno)}+2(Q-1)\norm\frac{u_\varrho\psi^{1/2}}{\varrho}\norm^2_{L^2(\rno)}\\+3(Q-3)(Q-1)\norm\frac{u\psi^{1/2}}{\varrho^2}\norm^2_{L^2(\rno)}.
			\end{multline*}
		}
	\end{remark}
	\begin{remark}
		{\rm
			After neglecting the non-negative remainder terms in the identity of Theorem~\ref{rellichprojection}, we have the following comparison result. Let $Q\geq 4$ and $u\in C_c^\infty(\rno \setminus \{o\})$, there holds
			\begin{align}\label{rad_lap_comp}
				\int_{\rno}\frac{|\lapg u|^2}{|\gradg \varrho|^2}\dx\dt\geq\int_{\rno}\frac{|\rlapg u|^2}{|\gradg \varrho|^2}\dx\dt.
			\end{align}
			This type of comparison results have been well studied for Riemannian model manifolds whose sectional curvature is bounded by minus one (see \cite[Lemma~6.1]{jmaa}).}
	\end{remark}
	
	\begin{remark}
		{\rm
			The above comparison \eqref{rad_lap_comp}, and neglecting non-negative terms in the identity of the part (b) of Corollary \ref{cor-hr-r-rad}, in terms of the radial operator, we can retrieve our known Rellich inequality. For $Q\geq 5$ and $u\in C_c^\infty(\rno \setminus \{o\})$, there holds
			\begin{align*}
				\int_{\rno}\frac{|\lapg u|^2}{|\gradg \varrho|^2}\dx \dt\geq\int_{\rno}\frac{|\rlapg u|^2}{|\gradg \varrho|^2}\dx\dt\geq \frac{Q^2(Q-4)^2}{16}\int_{\rno} \frac{|u|^2|\gradg \varrho|^2}{\varrho^4}\dx\dt.
			\end{align*}
			This gives the improvement of \eqref{rellich} and also settles the optimality issue of the Rellich constant.
		}
	\end{remark}
	
	It is worth mentioning that Theorem~\ref{rellichspherical} and Theorem~\ref{rellichprojection} are equivalent in some sense. Here we show a connection between the spherical derivatives and projection operators mentioned earlier. The lemma reads as follows :
	\begin{lemma}\label{compare-1}
		Let $Q\geq 4$ and $u\in C_c^\infty(\rno\setminus\{o\})$. Then we have
		\begin{itemize}
			\item[(a)] \begin{align*}%\label{sph-proj-1}
				\int_{\rno}\frac{|\sum_{j=1}^{n+1}L_j^2 u|^2}{|\gradg \varrho|^2}\dx \dt=16\sum_{k=0}^\infty\lambda_k^2\norm\frac{(\mathcal{P}_k u)\psi^{1/2}}{\varrho^2}\norm^2_{L^2(\rno)};
			\end{align*}
			\item[(b)] \begin{align*}%\label{sph-proj-2}
				\sum_{j=1}^{n+1}\int_{\rno}\frac{|L_j u|^2}{\varrho^2}\dx\dt=4\sm \lambda_k\norm\frac{(\mathcal{P}_k u)\psi^{1/2}}{\varrho^2}\norm^2_{L^2(\rno)}.
			\end{align*}
		\end{itemize}
	\end{lemma}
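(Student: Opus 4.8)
The plan is to reduce both identities, after the spherical harmonic expansion $u(\varrho,\sigma)=\sm d_k(\varrho)\Phi_k(\sigma)$, to the single radial integral $\int_{0}^{\infty}\varrho^{n-3}d_k^2\dr$, and to notice that the prefactors $16\lambda_k^2$ and $4\lambda_k$ emerge naturally from the eigenvalue relation $-\mathcal{L}_\sigma\Phi_k=\lambda_k\Phi_k$. First I would record the elementary computation of the right-hand norms: since $\mathcal{P}_k u=d_k(\varrho)\Phi_k(\sigma)$, the polar integration formula together with the orthonormality $\int_\Omega\Phi_k^2\dsn=1$ gives $\norm\frac{(\mathcal{P}_k u)\psi^{1/2}}{\varrho^2}\norm^2_{L^2(\rno)}=\frac12\int_{0}^{\infty}\varrho^{n-3}d_k^2\dr$. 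Hence the right-hand sides of (a) and (b) equal $8\sm\lambda_k^2\int_{0}^{\infty}\varrho^{n-3}d_k^2\dr$ and $2\sm\lambda_k\int_{0}^{\infty}\varrho^{n-3}d_k^2\dr$ respectively, and the task reduces to matching the two left-hand sides.

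For part (a) I would invoke the operator identity $\sum_{j=1}^{n+1}L_j^2=\frac{4\psi}{\varrho^2}\mathcal{L}_\sigma$ from \eqref{1} of Lemma~\ref{iden-lem}. Applying it termwise to the expansion of $u$ and using $\mathcal{L}_\sigma\Phi_k=-\lambda_k\Phi_k$ yields $\sum_{j=1}^{n+1}L_j^2 u=-\frac{4\psi}{\varrho^2}\sm\lambda_k d_k\Phi_k$. Squaring, dividing by $|\gradg\varrho|^2=\psi$, and integrating in polar coordinates, the orthonormality of $\{\Phi_k\}$ collapses the double sum to a single one and produces exactly $8\sm\lambda_k^2\int_{0}^{\infty}\varrho^{n-3}d_k^2\dr$, which matches the right-hand side computed above.

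Part (b) is the more delicate one, since the integrand $\sum_j(L_j u)^2$ is a sum of squared first-order derivatives rather than the operator $\sum_j L_j^2$ applied to $u$. The key step is the pointwise carr\'e-du-champ identity $\sum_{j=1}^{n+1}(L_j u)^2=\frac{4\psi}{\varrho^2}|\nabla_\sigma u|^2$, which I would establish by expanding each $L_j u=X_j u-\frac{\partial\varrho}{\partial x_j}\partial_\varrho u$ and the analogous $L_{n+1}u$, and then using the two facts $\sum_{j=1}^{n}\big(\tfrac{\partial\varrho}{\partial x_j}\big)^2+|x|^2\big(\tfrac{\partial\varrho}{\partial t}\big)^2=|\gradg\varrho|^2=\psi$ and $\gradg u\cdot\gradg\varrho=\psi\,\partial_\varrho u$; the latter follows directly from \eqref{0} together with the explicit forms $\frac{\partial\varrho}{\partial x_j}=\frac{x_j|x|^2}{\varrho^3}$ and $\frac{\partial\varrho}{\partial t}=\frac{2t}{\varrho^3}$. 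This reduces $\sum_j(L_j u)^2$ to $|\gradg u|^2-\psi(\partial_\varrho u)^2=\frac{4\psi}{\varrho^2}|\nabla_\sigma u|^2$. Inserting this and integrating, the spherical integral becomes $\int_\Omega|\nabla_\sigma u|^2\dsn$, which by Lemma~\ref{int-by-parts} and the eigenvalue relation equals $\sm\lambda_k d_k^2$; this yields $2\sm\lambda_k\int_{0}^{\infty}\varrho^{n-3}d_k^2\dr$ and closes the identity.

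I expect the only genuine obstacle to be the verification of the carr\'e-du-champ identity in part (b): one must carefully track the cross terms arising in $\sum_j(L_j u)^2$ and recognise their sum as $-2\,\partial_\varrho u\,(\gradg u\cdot\gradg\varrho)$, which is precisely where \eqref{0} is used. Everything else is a routine combination of the polar integration formula, the orthonormality of the spherical harmonics, and the by-parts identity of Lemma~\ref{int-by-parts}.
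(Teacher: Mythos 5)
Your argument is correct, and part (a) is essentially the paper's own proof: both apply the operator identity $\sum_{j=1}^{n+1}L_j^2=\frac{4\psi}{\varrho^2}\mathcal{L}_\sigma$ from \eqref{1} and then conclude via the eigenvalue relation and orthonormality in polar coordinates. In part (b), however, you take a genuinely different route. The paper works at the integral level: using the by-parts formula of part (4) of Lemma~\ref{iden-lem} together with part (5), and crucially the algebraic relation \eqref{2} (which makes the $(Q-1)$-correction terms cancel after summing over $j$), it converts the first-order quadratic form into the second-order pairing
\begin{align*}
\sum_{j=1}^{n+1}\int_{\rno}\frac{|L_j u|^2}{\varrho^2}\dx\dt=-\sum_{j=1}^{n+1}\int_{\rno}\varrho^{-2}\,u\,(L_j^2u)\dx\dt,
\end{align*}
and then finishes again with \eqref{1} and $-\mathcal{L}_\sigma\Phi_k=\lambda_k\Phi_k$, so it never needs the sphere-level integration by parts of Lemma~\ref{int-by-parts}. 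You instead establish the pointwise carr\'e-du-champ identity
\begin{align*}
\sum_{j=1}^{n+1}(L_j u)^2=|\gradg u|^2-\psi(\partial_\varrho u)^2=\frac{4\psi}{\varrho^2}|\nabla_\sigma u|^2,
\end{align*}
which indeed checks out: the cross terms sum to $-2\,\partial_\varrho u\,(\gradg u\cdot\gradg\varrho)=-2\psi(\partial_\varrho u)^2$ by \eqref{0} and the explicit derivatives $\frac{\partial\varrho}{\partial x_j}=\frac{x_j|x|^2}{\varrho^3}$, $\frac{\partial\varrho}{\partial t}=\frac{2t}{\varrho^3}$, while the quadratic terms contribute $\psi(\partial_\varrho u)^2$ since $|\gradg\varrho|^2=\psi$; you then do need Lemma~\ref{int-by-parts} to turn $\int_\Omega|\nabla_\sigma u|^2\dsn$ into $\sm\lambda_k d_k^2$. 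The trade-off: your route produces a stronger pointwise statement, identifying the vector fields $L_j$ as realizing exactly the angular part of the Grushin gradient (of independent interest, and it bypasses both the global by-parts formula and the cancellation mechanism \eqref{2}); the paper's route stays entirely within the operator toolkit of Lemma~\ref{iden-lem} already assembled for Theorem~\ref{rellichspherical} and avoids any fresh pointwise computation. Both arguments reduce to the same radial integrals $2\sm\lambda_k\izf\varrho^{n-3}d_k^2\dr$, matching the right-hand sides, so your proposal is complete.
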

	\begin{proof}
		Let us first write the proof of part (a). Begin with the spherical harmonic decomposition of $u$ and we can write 
		$$u(x)=u(\varrho,\sigma)=\sum_{k=0}^{\infty}d_{k}(\varrho)\Phi_k(\sigma)=\sum_{k=0}^{\infty}\mathcal{P}_ku, \quad \text{ and }-\mathcal{L}_{\sigma}\Phi_k(\sigma)=\lambda_k\Phi_k(\sigma).$$
		Now, using \eqref{1}, we have
		\begin{align*}
			\int_{\rno}\frac{|\sum_{j=1}^{n+1}L_j^2 u|^2}{|\gradg \varrho|^2}\dx \dt=16\int_{\rno}\frac{\psi}{\varrho^4}|\mathcal{L}_\sigma u|^2\dx \dt=16\sum_{k=0}^\infty\lambda_k^2\int_{\rno}\frac{\psi}{\varrho^4}(\mathcal{P}_k u)^2\dx\dt.
		\end{align*}
		Using this we deduce part (a). For part (b), we use the by parts formula mentioned in part (4), and then part (5) of Lemma~\ref{iden-lem}. Finally we use \eqref{2}, and \eqref{1}, and obtain 
		\begin{align*}
			\sum_{j=1}^{n+1}\int_{\rno}\frac{|L_j u|^2}{\varrho^2}\dx\dt=-\sum_{j=1}^{n+1}\int_{\rno}\varrho^{-2} u (L_j^2 u)\dx\dt=\sum_{k=0}^\infty4\lambda_k \int_{\rno}\frac{\psi( \mathcal{P}_k u)^2}{\varrho ^4}\dx\dt.
		\end{align*}
	\end{proof}
	
	\begin{lemma}\label{compare-2}
		Let $Q\geq 4$ and $u\in C_c^\infty(\rno\setminus\{o\})$. Then we have
		\begin{multline*}
			\sum_{j=1}^{n+1}\int_{\rno}\varrho^{2-Q}\left|\frac{\partial }{\partial \varrho}\left(L_ju\varrho^{\frac{Q-2}{2}}\right)\right|^2\dx\dt\\=4\sm \lambda_k\norm\frac{(\mathcal{P}_k u_\varrho)\psi^{1/2}}{\varrho}\norm^2_{L^2(\rno)}-(Q-4)^2\sm \lambda_k\norm\frac{(\mathcal{P}_k u)\psi^{1/2}}{\varrho^2}\norm^2_{L^2(\rno)}.
		\end{multline*}	
	\end{lemma}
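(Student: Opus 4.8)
The plan is to rewrite the radial derivative of $L_j u\,\varrho^{\frac{Q-2}{2}}$ through the commutator relation \eqref{3}, expand the square into three pieces, and then reduce each piece to the model computation already used for Lemma~\ref{compare-1}(b).

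First I would use the product rule together with \eqref{3}, written in the form $\frac{\partial}{\partial\varrho}L_j=L_j\frac{\partial}{\partial\varrho}-\frac{1}{\varrho}L_j$, to obtain
\begin{align*}
\frac{\partial}{\partial\varrho}\left(L_j u\,\varrho^{\frac{Q-2}{2}}\right)=\varrho^{\frac{Q-2}{2}}L_j u_\varrho+\frac{Q-4}{2}\varrho^{\frac{Q-4}{2}}L_j u.
\end{align*}
Squaring, multiplying by $\varrho^{2-Q}$ and summing over $j$ yields
\begin{align*}
\sum_{j=1}^{n+1}\int_{\rno}\varrho^{2-Q}\left|\frac{\partial}{\partial\varrho}\left(L_j u\,\varrho^{\frac{Q-2}{2}}\right)\right|^2\dx\dt=A+(Q-4)B+\frac{(Q-4)^2}{4}C,
\end{align*}
where $A=\sum_{j=1}^{n+1}\int_{\rno}|L_j u_\varrho|^2\dx\dt$, $B=\sum_{j=1}^{n+1}\int_{\rno}\varrho^{-1}(L_j u_\varrho)(L_j u)\dx\dt$ and $C=\sum_{j=1}^{n+1}\int_{\rno}\varrho^{-2}|L_j u|^2\dx\dt$.

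The core of the argument is a single polarized integration-by-parts identity: for any radial weight $w(\varrho)$ and any $v,\tilde v\in C_c^\infty(\rno\setminus\{o\})$,
\begin{align*}
\sum_{j=1}^{n+1}\int_{\rno}w(\varrho)(L_j v)(L_j\tilde v)\dx\dt=-\int_{\rno}w(\varrho)\,\tilde v\sum_{j=1}^{n+1}L_j^2 v\,\dx\dt=4\sm\lambda_k\int_{\rno}w(\varrho)\frac{\psi}{\varrho^2}(\mathcal{P}_k v)(\mathcal{P}_k\tilde v)\dx\dt.
\end{align*}
I would prove this exactly as in Lemma~\ref{compare-1}(b): take $g=w(\varrho)L_j v$ and $f=\tilde v$ in part (4) of Lemma~\ref{iden-lem}, use part (5) so that $L_j$ annihilates the radial factor $w$ and hence $L_j g=w\,L_j^2 v$, sum over $j$ so that the zeroth-order terms cancel by \eqref{2}, and finally invoke $\sum_{j}L_j^2=\frac{4\psi}{\varrho^2}\mathcal{L}_\sigma$ from \eqref{1} together with the expansions $v=\sm\mathcal{P}_k v$, $-\mathcal{L}_\sigma\Phi_k=\lambda_k\Phi_k$ and the orthonormality of $\{\Phi_k\}$.

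Applying this identity with $w\equiv1$, $v=\tilde v=u_\varrho$ evaluates $A=4\sm\lambda_k\|(\mathcal{P}_k u_\varrho)\psi^{1/2}/\varrho\|_{L^2(\rno)}^2$, and with $w=\varrho^{-2}$, $v=\tilde v=u$ it gives $C=4\sm\lambda_k\|(\mathcal{P}_k u)\psi^{1/2}/\varrho^2\|_{L^2(\rno)}^2$, which is precisely Lemma~\ref{compare-1}(b). For the cross term I would choose $w=\varrho^{-1}$, $v=u_\varrho$, $\tilde v=u$; after the polar reduction and orthonormality this becomes $B=2\sm\lambda_k\int_0^\infty\varrho^{n-2}d_k'\,d_k\dr$, and a one-dimensional integration by parts replaces $\int_0^\infty\varrho^{n-2}d_k'\,d_k\dr$ by $-\frac{n-2}{2}\int_0^\infty\varrho^{n-3}d_k^2\dr$. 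The one genuinely delicate point—the step where the constants are forced to match—is recognizing that here $Q=n+2$, so $n-2=Q-4$; this turns $B$ into $-2(Q-4)\sm\lambda_k\|(\mathcal{P}_k u)\psi^{1/2}/\varrho^2\|_{L^2(\rno)}^2$. Substituting $A$, $B$, $C$ back, the pieces $\frac{(Q-4)^2}{4}C=(Q-4)^2\sm\lambda_k\|(\mathcal{P}_k u)\psi^{1/2}/\varrho^2\|^2$ and $(Q-4)B=-2(Q-4)^2\sm\lambda_k\|(\mathcal{P}_k u)\psi^{1/2}/\varrho^2\|^2$ combine to $-(Q-4)^2\sm\lambda_k\|(\mathcal{P}_k u)\psi^{1/2}/\varrho^2\|^2$, which together with $A$ gives precisely the claimed identity. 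The main obstacle is thus entirely in the cross term $B$: setting up the polarized by-parts identity correctly and exploiting $Q=n+2$ to collapse $(Q-4)B+\frac{(Q-4)^2}{4}C$ into a single negative term.
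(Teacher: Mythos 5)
Your proposal is correct, and at bottom it runs on the same toolkit as the paper's proof --- the commutator \eqref{3}, the by-parts and annihilation rules of Lemma~\ref{iden-lem} combined with \eqref{2} and \eqref{1}, and reduction to spherical-harmonic norms via Lemma~\ref{compare-1} --- but the bookkeeping is organized differently, and the difference is worth recording. The paper expands the square keeping $(L_ju)_\varrho$ (coefficient $\tfrac{Q-2}{2}$), so its cross term $(Q-2)\sum_j\int_{\rno}\varrho^{-1}(L_ju)_\varrho(L_ju)\dx\dt$ is a perfect radial derivative of $|L_ju|^2$ and is dispatched by a one-dimensional integration by parts; only afterwards does it invoke \eqref{3} and Lemma~\ref{compare-1} twice to convert $\sum_j\int_{\rno}|(L_ju)_\varrho|^2\dx\dt$ into the two harmonic norms, producing $4(Q-3)-(Q-2)^2=-(Q-4)^2$. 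You commute first, so your expansion $A+(Q-4)B+\tfrac{(Q-4)^2}{4}C$ carries the coefficient $\tfrac{Q-4}{2}$ and a cross term $B=\sum_j\int_{\rno}\varrho^{-1}(L_ju_\varrho)(L_ju)\dx\dt$ that is \emph{not} a perfect derivative (since $L_ju_\varrho\neq(L_ju)_\varrho$), which is exactly why you need the polarized weighted identity; that identity is a sound and mildly more general packaging of what the paper uses implicitly, its proof being verbatim the Lemma~\ref{compare-1}(b) computation, and since the only weights you invoke are $1$, $\varrho^{-1}$, $\varrho^{-2}$, part (5) of Lemma~\ref{iden-lem} as stated already covers them (also note $u_\varrho\in C_c^\infty(\rno\setminus\{o\})$ by \eqref{0}, so taking $v=u_\varrho$ is legitimate, and $\mathcal{P}_k u_\varrho=d_k'\Phi_k$). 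The constants check out: $B=-2(Q-4)\sm\lambda_k\bigl\|(\mathcal{P}_ku)\psi^{1/2}/\varrho^2\bigr\|^2_{L^2(\rno)}$, so $(Q-4)B+\tfrac{(Q-4)^2}{4}C=-(Q-4)^2\sm\lambda_k\bigl\|(\mathcal{P}_ku)\psi^{1/2}/\varrho^2\bigr\|^2_{L^2(\rno)}$, in agreement with the paper. What your route buys is uniformity --- all three pieces $A$, $B$, $C$ fall to a single lemma --- at the cost of stating that lemma; the paper's route avoids polarization entirely but pays with a two-stage argument and the less transparent cancellation $4(Q-3)-(Q-2)^2$.
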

	\begin{proof}
		By using part (5) of Lemma~\ref{iden-lem} and repeatedly exploiting integration by parts, we have
		\begin{align*}
			&\sum_{j=1}^{n+1}\int_{\rno}\varrho^{2-Q}\left|\frac{\partial }{\partial \varrho}\left(L_ju\varrho^{\frac{Q-2}{2}}\right)\right|^2\dx\dt\\&=\sum_{j=1}^{n+1}\int_{\rno}|(L_j u)_\varrho|^2\dx\dt+\frac{(Q-2)^2}{4}\sum_{j=1}^{n+1}\int_{\rno}\varrho^{-2}(L_j u)^2\dx\dt\\&+(Q-2)\sum_{j=1}^{n+1}\int_{\rno}\varrho^{-1}(L_j u)_\varrho(L_j u)\dx\dt\\&=\sum_{j=1}^{n+1}\int_{\rno}|(L_j u)_\varrho|^2\dx\dt-(Q-2)^2\sm \lambda_k\norm\frac{(\mathcal{P}_k u)\psi^{1/2}}{\varrho^2}\norm^2_{L^2(\rno)}.
		\end{align*}
		Now using \eqref{3}, we have $L_j(u_\varrho)=(L_j u)_\varrho+\varrho^{-1}L_j u$, and using integration by parts we obtain 
		\begin{multline*}
			\sum_{j=1}^{n+1}\int_{\rno}|(L_j u)_\varrho|^2\dx\dt\\=4\sm \lambda_k\norm\frac{(\mathcal{P}_k u_\varrho)\psi^{1/2}}{\varrho}\norm^2_{L^2(\rno)}+4(Q-3)\sm \lambda_k\norm\frac{(\mathcal{P}_k u)\psi^{1/2}}{\varrho^2}\norm^2_{L^2(\rno)}.
		\end{multline*}
		In the above, we use Lemma~\ref{compare-1} twice. Now, substituting this in the above and combining two terms we deduce the required identity.
	\end{proof}
	
	\begin{remark}
		{\rm
			By utilizing Lemma~\ref{compare-1} and Lemma~\ref{compare-2}, we can compare all terms in Theorem~\ref{rellichspherical} and Theorem~\ref{rellichprojection}. After combining the terms, we find that the Rellich identity, expressed in terms of spherical derivatives and projection operators, is the same. 
		}
	\end{remark}
	
	\medspace
	
	\section{Symmetrization principle on Grushin space}	
	This section aims to discuss the symmetrization type principle in the Grushin space. The rearrangement type arguments, particularly, P\'olya-Szeg\"o inequality are not true when dealing with the higher-order operator, even for the Euclidean Laplacian. Here, we shall discuss the symmetrization principle in the Grushin space. The results obtained here can be compared with \cite[Theorem~1.11]{dll} in the Euclidean setting and we also refer to \cite{GM1} for the related open problems. The space $C_{c, rad}^\infty$ denotes the space of all radial functions. Let us begin with the following first-order inequality.
	\begin{theorem}
		Let $G$ be the Grushin space with dimension $Q\geq 4$ and $B_R(o)$ denotes the $\varrho-$gauge ball with radius $R$ for $0<R\leq \infty$. Assume $W$ be any function on $(0,R)$. Then we have
		\begin{align*}
			\int_{B_{R}(o)} |\gradg u|^{2} \dx\dt \geq 
			\int_{B_{R}(o)} W(\varrho)|u|^{2}|\gradg \varrho|^2\dx\dt \quad \text{ for all } u\in C_{c}^{\infty}\left(B_{R}(o) \backslash\{o\}\right)
		\end{align*}
		if and only if
		\begin{align*}
			\int_{B_{R}(o)} |\gradg u|^{2} \dx\dt \geq 
			\int_{B_{R}(o)} W(\varrho)|u|^{2}|\gradg \varrho|^2\dx\dt \quad \text{ for all } u\in C_{c,rad}^{\infty}\left(B_{R}(o) \backslash\{o\}\right).
		\end{align*}
	\end{theorem}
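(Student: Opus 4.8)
The plan is to prove only the nontrivial implication, that validity on radial functions forces validity on all functions; the converse is immediate since $C_{c,rad}^{\infty}(B_R(o)\setminus\{o\})\subset C_c^\infty(B_R(o)\setminus\{o\})$, so restricting the general inequality to radial test functions gives the radial one for free. For the substantive direction I would fix an arbitrary $u\in C_c^\infty(B_R(o)\setminus\{o\})$ and expand it in spherical harmonics, $u(\varrho,\sigma)=\sm d_k(\varrho)\Phi_k(\sigma)$ with $d_k(\varrho)=\int_{\Omega}u(\varrho,\sigma)\Phi_k(\sigma)\dsn$. A preliminary point to record is that each profile $d_k$ is smooth and compactly supported in $(0,R)$, so that the radial function $(x,t)\mapsto d_k(\varrho(x,t))$ genuinely lies in $C_{c,rad}^{\infty}(B_R(o)\setminus\{o\})$; this holds because $u$ is smooth and compactly supported away from the origin and integration against the fixed smooth density $\Phi_k\dsn$ commutes with differentiation in $\varrho$.

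Next I would pass to polar coordinates on both sides, exactly as in the proof of Theorem~\ref{hardy-rell-iden}. Using $|\gradg u|^2=\psi\big((\partial_\varrho u)^2+\tfrac{4}{\varrho^2}|\nabla_\sigma u|^2\big)$, the identity $|\gradg\varrho|^2=\psi$, the volume element $\dx\dt=\tfrac12\varrho^{n+1}\psi^{-1}\dr\dsn$, the orthonormality of $\{\Phi_k\}$, and Lemma~\ref{int-by-parts} (which yields $\int_{\Omega}|\nabla_\sigma\Phi_k|^2\dsn=\lambda_k$), the whole deficit diagonalises into a sum of one–dimensional integrals:
\begin{align*}
\int_{B_R(o)}|\gradg u|^2\dx\dt-\int_{B_R(o)}W(\varrho)|u|^2|\gradg\varrho|^2\dx\dt=\frac{1}{2}\sm\izfr\left[(d_k')^2+\frac{4\lambda_k}{\varrho^2}d_k^2-W d_k^2\right]\varrho^{n+1}\dr.
\end{align*}

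The decisive step is then the elementary observation that $\lambda_k=\tfrac{k(k+n)}{4}\geq0$ for every $k$, so the middle term is nonnegative. Applying the radial hypothesis to the admissible radial test function $d_k$ — which, written in polar form, reads $\izfr(d_k')^2\varrho^{n+1}\dr\geq\izfr W d_k^2\varrho^{n+1}\dr$ — bounds each summand below by $\izfr\tfrac{4\lambda_k}{\varrho^2}d_k^2\varrho^{n+1}\dr\geq0$, and summing over $k$ delivers the general inequality. The main obstacle is purely technical rather than conceptual: one must justify that the coefficients $d_k$ are legitimate radial test functions and that the interchange of the summation with the integration (and the termwise integration by parts) is valid, both of which are handled precisely as in Theorem~\ref{hardy-rell-iden}. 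As a byproduct the argument exhibits the radial case as the extremal one, the higher modes strictly improving the inequality by the nonnegative amount $\tfrac12\sm\izfr\tfrac{4\lambda_k}{\varrho^2}d_k^2\varrho^{n+1}\dr$.
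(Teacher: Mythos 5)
Your proposal is correct and takes essentially the same route as the paper's proof: diagonalize the deficit via the spherical-harmonic expansion in polar coordinates, apply the radial hypothesis to each coefficient $d_k$, and conclude from the nonnegativity of the angular term $4\lambda_k\int_0^R d_k^2\,\varrho^{n-1}\dr$ (the paper invokes $4\lambda_k\geq Q-1$ for $k\geq 1$, but plain nonnegativity of $\lambda_k$ suffices, exactly as you observe). Your extra care in checking that each $d_k$ is an admissible radial test function and that the termwise manipulations are justified only strengthens an argument the paper states more briefly.
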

	\begin{proof}
		The one-way implication is trivial. So it is enough to show the other way. Let us begin with the spherical harmonic decomposition of $u$ and we can write
		\begin{align*}
			u(\varrho,\sigma)=\sm d_k(\varrho)\Phi_k(\sigma).
		\end{align*}
		Next exploiting the polar coordinate structure, we deduce
		\begin{align}\label{sym-hardy-eq-1}
			\int_{B_R(o)} |\gradg u|^2\dx\dt
			=\frac{1}{2}\sm\int_{0}^R\bigg[{d_k^\prime}^2\varrho^{n+1}+4\lambda_k  d_k^2\varrho^{n-1}\bigg]\dr,
		\end{align}
		and
		\begin{align}\label{sym-hardy-eq-2}
			\int_{B_R(o)}W(\varrho)|u|^2|\gradg \varrho|^2\dx\dt=\frac{1}{2}\sm\izfr W(\varrho)d_k^2\varrho^{n+1}\dr.
		\end{align}
		Now, using the result for each radial function $d_k(\varrho)$ we have
		\begin{align}\label{sym-hardy-eq-3}
			\int_{0}^R{d_k^\prime}^2\varrho^{n+1}\dr \geq \izfr W(\varrho)d_k^2\varrho^{n+1}\dr.
		\end{align}
		After subtracting \eqref{sym-hardy-eq-2} from \eqref{sym-hardy-eq-1}, and then using \eqref{sym-hardy-eq-3}, it only remains to check for each $k$, the non-negativity of the following term $4\lambda_k  \izfr d_k^2\varrho^{n-1}\dr$. This is immediate as for each $k\geq 1$, we have $4\lambda_{k}\geq (Q-1)$. This completes the proof.
	\end{proof}
	
	Now we are in a situation to prove the second-order version of the above theorem.  We obtain the following symmetrization principle for the second-order operator : 
	\begin{theorem}
		Let $G$ be the Grushin space with dimension $Q\geq 4$ and $B_R(o)$ denotes the $\varrho-$gauge ball with radius $R$ for $0<R\leq \infty$. Assume $W$ be any function on $(0,R)$. Then we have 
		\begin{align*}
			\int_{B_{R}(o)} \frac{(\lapg u)^{2}}{|\gradg \varrho|^{2}} \dx\dt \geq 
			\int_{B_{R}(o)} W(\varrho)|u|^{2}|\gradg \varrho|^2\dx\dt \quad \text{ for all } u\in C_{c}^{\infty}\left(B_{R}(o) \backslash\{o\}\right)
		\end{align*}
		if and only if
		\begin{align*}
			\int_{B_{R}(o)} \frac{(\lapg u)^{2}}{|\gradg \varrho|^{2}} \dx\dt \geq 
			\int_{B_{R}(o)} W(\varrho)|u|^{2}|\gradg \varrho|^2\dx\dt \quad \text{ for all } u\in C_{c,rad}^{\infty}\left(B_{R}(o) \backslash\{o\}\right).
		\end{align*}
	\end{theorem}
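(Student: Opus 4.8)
The forward implication (``only if'') is immediate, since $C_{c,rad}^{\infty}(B_R(o)\setminus\{o\})\subset C_c^{\infty}(B_R(o)\setminus\{o\})$. The content is in the reverse direction, and the plan is to mirror the proof of the first-order symmetrization theorem above, now feeding in the mode-by-mode decomposition of the second-order form already recorded in Step~1 of the proof of Theorem~\ref{nr-rell}.

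First I would expand $u(\varrho,\sigma)=\sm d_k(\varrho)\Phi_k(\sigma)$ and note that each profile $d_k(\varrho)=\int_\Omega u(\varrho,\sigma)\Phi_k(\sigma)\dsn$ is smooth and compactly supported in $(0,R)$, hence defines a legitimate radial function in $C_c^{\infty}(B_R(o)\setminus\{o\})$. Specializing the computation of Step~1 of Theorem~\ref{nr-rell} to $V\equiv 1$, the orthonormality of $\{\Phi_k\}$ in $L^2(\Omega,\dsn)$ collapses the left-hand side to
\begin{align*}
\int_{B_R(o)}\frac{(\lapg u)^2}{|\gradg\varrho|^2}\dx\dt=\frac12\sm\Big[A_k+16\lambda_k^2 B_k-8\lambda_k C_k\Big],
\end{align*}
where $A_k=\izfr\big(d_k''+\tfrac{Q-1}{\varrho}d_k'\big)^2\varrho^{n+1}\dr$, $B_k=\izfr d_k^2\varrho^{n-3}\dr$ and $C_k=\izfr\big(d_k''+\tfrac{Q-1}{\varrho}d_k'\big)d_k\varrho^{n-1}\dr$, while the right-hand side becomes $\tfrac12\sm\izfr W(\varrho)d_k^2\varrho^{n+1}\dr$. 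The radial hypothesis, applied to the profile $d_k$, is exactly $A_k\geq\izfr W(\varrho)d_k^2\varrho^{n+1}\dr$ (the common factor $|\Omega|$ cancels), so everything reduces to showing that the $\lambda_k$-weighted remainder $16\lambda_k^2 B_k-8\lambda_k C_k$ is non-negative for every $k$.

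This is the only place I expect real work. I would evaluate $C_k$ by integrating by parts in $\varrho$, precisely the manipulations behind $\mathcal{I}_1,\mathcal{I}_2$ in Step~4 of Theorem~\ref{nr-rell} with $V\equiv 1$, using $Q=n+2$ to identify the exponents; this yields the clean identity
\begin{align*}
C_k=-\izfr (d_k')^2\varrho^{n-1}\dr-(Q-4)B_k.
\end{align*}
Substituting back gives
\begin{align*}
16\lambda_k^2 B_k-8\lambda_k C_k=16\lambda_k^2 B_k+8\lambda_k\izfr (d_k')^2\varrho^{n-1}\dr+8\lambda_k(Q-4)B_k,
\end{align*}
a sum of three manifestly non-negative terms since $\lambda_k=\tfrac{k(k+n)}{4}\geq 0$, $B_k\geq 0$, and $Q\geq 4$. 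Summing over $k$ and combining with $A_k\geq\izfr W d_k^2\varrho^{n+1}\dr$ then yields $\int_{B_R(o)}\frac{(\lapg u)^2}{|\gradg\varrho|^2}\dx\dt\geq\int_{B_R(o)}W(\varrho)|u|^2|\gradg\varrho|^2\dx\dt$, completing the reverse implication. The main obstacle, as in Theorem~\ref{nr-rell}, is organizing the integrations by parts so that the cross term $C_k$ collapses to signed quantities; the advantage here over the general Bessel-pair setting is that with $V\equiv 1$ each remainder term is separately non-negative, so no structural assumption on $W$ is required.
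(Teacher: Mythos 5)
Your proposal is correct and takes essentially the same route as the paper's own proof: the same spherical-harmonic decomposition, the same mode-by-mode application of the radial hypothesis to each profile $d_k$, and the same integrations by parts — your single identity $C_k=-\izfr (d_k')^2\varrho^{n-1}\dr-(Q-4)B_k$ is exactly the paper's two by-parts formulas \eqref{bp-1-1} and \eqref{bp-1-2} combined, and your remainder $16\lambda_k^2B_k+8\lambda_k\izfr(d_k')^2\varrho^{n-1}\dr+8\lambda_k(Q-4)B_k$ coincides with the paper's $\mathcal{M}=8\lambda_k\izfr(d_k')^2\varrho^{n-1}\dr+8\lambda_k\big(2\lambda_k+Q-4\big)B_k$. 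There is no gap; the argument is complete as written.
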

	\begin{proof}
		The one-sided implication follows trivially. So it is enough to show the other way. Let us begin with the spherical harmonic decomposition of $u$ and we can write
		\begin{align*}
			u(\varrho,\sigma)=\sm d_k(\varrho)\Phi_k(\sigma).
		\end{align*}
		By exploiting the polar coordinate structure, we deduce
		\begin{align}\label{sym-rell-eq-1}
			& \int_{B_{R}(o)} \frac{(\lapg u)^{2}}{|\gradg \varrho|^{2}} \dx\dt =\frac{1}{2}  \sum_{k = 0}^{\infty} \bigg[\int_0^R  \left( d_{k}^{\prime \prime} + \frac{(Q-1)}{\varrho}d_{k}^{\prime} \right)^2 \varrho^{n+1}\dr \nonumber \\& + 
			16 \lambda_{k}^2\int_0^R d_{k}^2 \varrho^{n-3} \dr - 8\lambda_k \int_0^R  \left( d_{k}^{\prime \prime} + \frac{(Q-1)}{\varrho}d_{k}^{\prime} \right) d_k \varrho^{n-1}\dr\bigg],
		\end{align}
		and
		\begin{align}\label{sym-rell-eq-2}
			\int_{B_R(o)}W(\varrho)|u|^2|\gradg \varrho|^2\dx\dt=\frac{1}{2}\sm\izfr W(\varrho)d_k^2\varrho^{n+1}\dr.
		\end{align}
		Then, using the result for each radial function $d_k(\varrho)$ we have
		\begin{align}\label{sym-rell-eq-3}
			\int_0^R  \left( d_{k}^{\prime \prime} + \frac{(Q-1)}{\varrho}d_{k}^{\prime} \right)^2 \varrho^{n+1}\dr \geq \izfr W(\varrho)d_k^2\varrho^{n+1}\dr.
		\end{align}
		After subtracting \eqref{sym-rell-eq-2} from \eqref{sym-rell-eq-1}, and then using \eqref{sym-rell-eq-3}, it only remains to check for each $k$, the non-negativity of the following term 
		\begin{align*}
			\mathcal{M}:=16 \lambda_{k}^2\int_0^R d_{k}^2 \varrho^{n-3} \dr - 8\lambda_k \int_0^R   d_{k}^{\prime \prime} d_k \varrho^{n-1}\dr-8\lambda_{k}(Q-1)\izfr d_{k}^{\prime}  d_k \varrho^{n-2}\dr.
		\end{align*} 
		Now using the by-parts we have
		\begin{align}\label{bp-1-1}
			\izfr d_{k}^{\prime}  d_k \varrho^{n-2}\dr=-\frac{(Q-4)}{2}\int_0^R d_{k}^{2}  \varrho^{n-3}\dr,	
		\end{align}
		and
		\begin{align}\label{bp-1-2}
			\int_0^R   d_{k}^{\prime \prime} d_k \varrho^{n-1}\dr&=-\int_{0}^R {d_k^\prime}^2\varrho^{n-1}\dr-(Q-3)\int_0^R  d_{k}^{\prime}  d_k \varrho^{n-2}\dr\nonumber\\&=-\int_{0}^R {d_k^\prime}^2\varrho^{n-1}\dr+\frac{(Q-3)(Q-4)}{2}\int_0^R   d_k^2 \varrho^{n-3}\dr.
		\end{align}
		Using \eqref{bp-1-1} and \eqref{bp-1-2} in $\mathcal{M}$, we obtain
		\begin{align*}
			\mathcal{M}&=8\lambda_{k}\int_{0}^R {d_k^\prime}^2\varrho^{n-1}\dr+8\lambda_{k}\big(2 \lambda_{k}+Q-4\big)\int_0^R d_{k}^2 \varrho^{n-3} \dr.
		\end{align*}
		This follows as for each $k\geq 1$, we have $2 \lambda_{k}+Q-4\geq \frac{3}{2}(Q-3)$. Finally, using $Q\geq 4$, we obtain $\mathcal{M}$ is non-negative and the result follows.
	\end{proof}
	
	Our next theorem deals with the Hardy-Rellich type symmetrization principle. It is worth noting that while proving the Hardy-Rellich we need to assume an additional assumption on the function $W.$ The inequality reads as follows : 
	\begin{theorem}\label{symmrellich}
		Let $G$ be the Grushin space with dimension $Q\geq 4$ and $B_R(o)$ denotes the $\varrho-$gauge ball with radius $R$ for $0<R\leq \infty$. Assume $W(t)$ be any function on $(0,R)$ satisfying 
		\begin{align}\label{sym-hardy-rell-cond}
			2\int_{0}^R {f^\prime}^2(t)t^{Q-3}\dt&+2\big(2 \lambda_{1}+Q-4\big)\int_0^R f^2(t) t^{Q-5} \dt \nonumber \\&  \geq \izfr W(t)f^2(t)t^{Q-3}\dt \quad \text{ for all } f\in C_c^\infty(0,R),
		\end{align}
		where $\lambda_{1}=(Q-1)/4$. Then we have 
		\begin{align*}
			\int_{B_{R}(o)} \frac{(\lapg u)^{2}}{|\gradg \varrho|^{2}} \dx\dt \geq 
			\int_{B_{R}(o)} W(\varrho)|\gradg u|^{2}\dx\dt \quad \text{ for all } u\in C_{c}^{\infty}\left(B_{R}(o) \backslash\{o\}\right)
		\end{align*}
		if and only if
		\begin{align*}
			\int_{B_{R}(o)} \frac{(\lapg u)^{2}}{|\gradg \varrho|^{2}} \dx\dt \geq 
			\int_{B_{R}(o)} W(\varrho)|\gradg u|^{2}\dx\dt \quad \text{ for all } u\in C_{c,rad}^{\infty}\left(B_{R}(o) \backslash\{o\}\right).
		\end{align*}
	\end{theorem}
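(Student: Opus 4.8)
The plan is to run the same spherical-harmonic scheme used in the two preceding symmetrization theorems, the only new ingredient being the auxiliary hypothesis \eqref{sym-hardy-rell-cond}. The ``only if'' direction is trivial, so I focus on ``if''. Writing $u(\varrho,\sigma)=\sm d_k(\varrho)\Phi_k(\sigma)$, the left-hand side is already decomposed in \eqref{sym-rell-eq-1}. For the right-hand side I would expand $|\gradg u|^2=\psi\big((\partial_\varrho u)^2+\tfrac{4}{\varrho^2}|\nabla_\sigma u|^2\big)$ and use the orthonormality of $\{\Phi_k\}$ together with $\int_\Omega|\nabla_\sigma\Phi_k|^2\dsn=\lambda_k$ (a consequence of Lemma~\ref{int-by-parts} and $-\mathcal{L}_\sigma\Phi_k=\lambda_k\Phi_k$) to obtain
\begin{align*}
\int_{B_R(o)} W(\varrho)|\gradg u|^2\dx\dt=\frac{1}{2}\sm\izfr W(\varrho)\big({d_k^\prime}^2\varrho^{n+1}+4\lambda_k d_k^2\varrho^{n-1}\big)\dr.
\end{align*}

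Next, because the radial hypothesis holds for \emph{every} radial function, it applies in particular to each profile $d_k$, giving $\izfr\big(d_k^{\prime\prime}+\tfrac{Q-1}{\varrho}d_k^\prime\big)^2\varrho^{n+1}\dr\geq\izfr W(\varrho){d_k^\prime}^2\varrho^{n+1}\dr$. Subtracting the two decompositions and discarding these non-negative contributions, it remains to check, for each $k$, the non-negativity of
\begin{align*}
\mathcal{N}_k:=16\lambda_k^2\izfr d_k^2\varrho^{n-3}\dr-8\lambda_k\izfr\Big(d_k^{\prime\prime}+\tfrac{Q-1}{\varrho}d_k^\prime\Big)d_k\varrho^{n-1}\dr-4\lambda_k\izfr W(\varrho)d_k^2\varrho^{n-1}\dr.
\end{align*}
For $k=0$ this vanishes since $\lambda_0=0$; for $k\geq1$ I would divide by $4\lambda_k>0$ and apply the by-parts identities \eqref{bp-1-1}--\eqref{bp-1-2} exactly as before to rewrite
\begin{align*}
\frac{\mathcal{N}_k}{4\lambda_k}=2\izfr{d_k^\prime}^2\varrho^{n-1}\dr+2\big(2\lambda_k+Q-4\big)\izfr d_k^2\varrho^{n-3}\dr-\izfr W(\varrho)d_k^2\varrho^{n-1}\dr.
\end{align*}

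Finally, since $\lambda_k=k(k+n)/4$ is increasing, $\lambda_k\geq\lambda_1=(Q-1)/4$ for all $k\geq1$; recalling $n-1=Q-3$ and $n-3=Q-5$, enlarging the coefficient $2(2\lambda_k+Q-4)$ down to $2(2\lambda_1+Q-4)$ can only lower the expression (the integral $\izfr d_k^2\varrho^{n-3}\dr$ being non-negative), so $\mathcal{N}_k/(4\lambda_k)$ is bounded below by the difference appearing in \eqref{sym-hardy-rell-cond} evaluated at $f=d_k$, which is non-negative by assumption. Hence $\mathcal{N}_k\geq0$ for all $k$ and the claimed inequality follows.

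I expect the only genuine obstacle to be conceptual rather than computational: unlike the two earlier theorems, whose right-hand side $W|u|^2|\gradg\varrho|^2$ carried no angular weight, here $W|\gradg u|^2$ contributes its own $\lambda_k$-weighted term $4\lambda_k\izfr W(\varrho) d_k^2\varrho^{n-1}\dr$ that the radial inequality alone cannot absorb. Condition \eqref{sym-hardy-rell-cond}, combined with the monotonicity $\lambda_k\geq\lambda_1$, is precisely what is needed to dominate this term mode-by-mode, which explains why this extra assumption on $W$ is required here but was absent in the preceding two results.
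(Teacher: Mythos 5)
Your proposal is correct and follows essentially the same route as the paper: spherical-harmonic decomposition, application of the radial inequality to each profile $d_k$, reduction via the by-parts identities \eqref{bp-1-1}--\eqref{bp-1-2} to the non-negativity of $2\izfr{d_k^\prime}^2\varrho^{n-1}\dr+2(2\lambda_k+Q-4)\izfr d_k^2\varrho^{n-3}\dr-\izfr W(\varrho)d_k^2\varrho^{n-1}\dr$, and finally the monotonicity $\lambda_k\geq\lambda_1$ combined with hypothesis \eqref{sym-hardy-rell-cond} at $f=d_k$. Your quantity $\mathcal{N}_k$ is exactly $4\lambda_k$ times the paper's $\mathcal{B}_k$, and your direct monotonicity argument is just a streamlined phrasing of the paper's comparison of $\mathcal{B}_k$ with $\mathcal{B}_{1,k}$.
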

	\begin{proof}
		The one-sided implication follows trivially. So it is enough to show the other way. Let us begin with the spherical harmonic decomposition of $u$ and we can write
		\begin{align*}
			u(\varrho,\sigma)=\sm d_k(\varrho)\Phi_k(\sigma).
		\end{align*}
		Next exploiting the polar coordinate structure we deduce
		\begin{align}\label{sym-hardy-rell-eq-1}
			& \int_{B_{R}(o)} \frac{(\lapg u)^{2}}{|\gradg \varrho|^{2}} \dx\dt =\frac{1}{2}  \sum_{k = 0}^{\infty} \bigg[\int_0^R  \left( d_{k}^{\prime \prime} + \frac{(Q-1)}{\varrho}d_{k}^{\prime} \right)^2 \varrho^{n+1}\dr \nonumber \\& + 
			16 \lambda_{k}^2\int_0^R d_{k}^2 \varrho^{n-3} \dr - 8\lambda_k \int_0^R  \left( d_{k}^{\prime \prime} + \frac{(Q-1)}{\varrho}d_{k}^{\prime} \right) d_k \varrho^{n-1}\dr\bigg],
		\end{align}
		and
		\begin{align}\label{sym-hardy-rell-eq-2}
			\int_{B_R(o)}W(\varrho) |\gradg u|^2\dx\dt
			=\frac{1}{2}\sm\int_{0}^R W(\varrho) \bigg[{d_k^\prime}^2\varrho^{n+1}+4\lambda_k  d_k^2\varrho^{n-1}\bigg]\dr.
		\end{align}
		Then, using the result for each radial function $d_k(\varrho)$ we have
		\begin{align}\label{sym-hardy-rell-eq-3}
			\int_0^R  \left( d_{k}^{\prime \prime} + \frac{(Q-1)}{\varrho}d_{k}^{\prime} \right)^2 \varrho^{n+1}\dr \geq \izfr W(\varrho) {d_k^\prime}^2\varrho^{n+1}\dr.
		\end{align}
		After subtracting \eqref{sym-hardy-rell-eq-2} from \eqref{sym-hardy-rell-eq-1}, and then using \eqref{sym-hardy-rell-eq-3}, it is only remains to check for each $k\geq 1$, the non-negativity of the following term 
		\begin{align*}
			\mathcal{B}_k&:=4 \lambda_{k}\int_0^R d_{k}^2 \varrho^{n-3} \dr - 2 \int_0^R   d_{k}^{\prime \prime} d_k \varrho^{n-1}\dr\\&-2(Q-1)\izfr d_{k}^{\prime}  d_k \varrho^{n-2}\dr-  \izfr W(\varrho)d_k^2\varrho^{n-1}\dr.
		\end{align*} 
		Using \eqref{bp-1-1} and \eqref{bp-1-2} in $\mathcal{B}_k$, we obtain
		\begin{align*}
			\mathcal{B}_k=	2\int_{0}^R {d_k^\prime}^2\varrho^{n-1}\dr+2\big(2 \lambda_{k}+Q-4\big)\int_0^R d_{k}^2 \varrho^{n-3} \dr -  \izfr W(\varrho)d_k^2\varrho^{n-1}\dr
		\end{align*}
		We will be done if we prove $\mathcal{B}_k\geq 0$ for all $k\geq 2$, as the $k=1$ case is already done if we chose radial function $d_1$ in \eqref{sym-hardy-rell-cond}. Now define 
		\begin{align*}
			\mathcal{B}_{1,k}:=	2\int_{0}^R {d_k^\prime}^2\varrho^{n-1}\dr+2\big(2 \lambda_{1}+Q-4\big)\int_0^R d_{k}^2 \varrho^{n-3} \dr -  \izfr W(\varrho)d_k^2\varrho^{n-1}\dr
		\end{align*}
		for each $k\geq 2$. Then it is easy to observe that by choosing $d_k$ in \eqref{sym-hardy-rell-cond}, we have $\mathcal{B}_{1,k}\geq 0$ for each $k\geq 2$. Now we notice
		\begin{align*}
			\mathcal{B}_k-\mathcal{B}_{1,k}&= 2\int_{0}^R {d_k^\prime}^2\varrho^{n-1}\dr+2\big(2 \lambda_{k}+Q-4\big)\int_0^R d_{k}^2 \varrho^{n-3} \dr -  \izfr W(\varrho)d_k^2\varrho^{n-1}\dr \\&- 2\int_{0}^R {d_k^\prime}^2\varrho^{n-1}\dr-2\big(2 \lambda_{1}+Q-4\big)\int_0^R d_{k}^2 \varrho^{n-3} \dr +  \izfr W(\varrho)d_k^2\varrho^{n-1}\dr\\&=4\big( \lambda_{k}-\lambda_{1}\big)\int_0^R d_{k}^2 \varrho^{n-3} \dr.
		\end{align*}
		Finally, observing for each $k\geq 2$, we have $4(\lambda_{k}-\lambda_{1})\geq (Q^2-3Q+1)\geq 0$, when $Q\geq 4$, we say each $\mathcal{B}_k$ is nonnegative. This completes the proof.
	\end{proof}
	
	\begin{remark}
		{\rm
			The rearrangement type arguments are valid in Hardy-Rellich type inequalities for Grushin spaces with every homogeneous dimension $Q\geq 4$ under an additional assumption \eqref{sym-hardy-rell-cond}. It is worth noting that such an assumption has far-reaching consequences in the Euclidean setting and is related to the symmetry-breaking phenomenon depending on the dimension of the ambient space. For further details, we refer to \cite[Remark 1.2]{dll}.
		}
	\end{remark}
	
	\medspace
	
	\section{Applications: Second-order uncertainty Principles and Caffarelli-Kohn-Nirenberg inequalities}
	This final section is devoted to the application of the results we obtained in the previous sections. Recently, there has been a growing interest in obtaining sharp uncertainty principles and the existence of extremals. To begin with, we shall observe some immediate facts of our main theorems. One of the important aspects of Corollary \ref{sym-hardy} is that the Hardy inequality holds for both radial and non-radial functions. However, we observed that in Theorem \ref{nr-rell} we require an additional condition \eqref{cond} to obtain Hardy-Rellich type inequality. In this part, we shall examine the outcomes when the dimensions of the Grushin space and Bessel pair differ. The following result can be compared with \cite[Theorem 1.3, Theorem 1.4]{dll}.  It is important to note that we will obtain the identity without any sign assumption on $V,$ where $V$ is a radial function, while for the non-radial setting, we need a non-negative sign assumption on $V.$ The result is as follows :
	\begin{proposition}\label{r-nr-dim-rell}
		Let $G$ be the Grushin space with dimension $Q\geq 4$ and $B_R(o)$ denotes the $\varrho-$gauge ball with radius $R$ for $0<R\leq \infty$. Assume that $\big(V(r), W(r)+Q\frac{V_r(r)}{r}\big)$ is a $(Q+2)$-dimensional Bessel pair on $(0,R)$ with positive solution $f(r)$. Then for all radial function $u \in C_{c}^{\infty}\left(B_{R}(o) \backslash\{o\}\right)$ there holds
		\begin{align*}
			\int_{B_{R}(o)} V(\varrho) \frac{(\lapg u)^{2}}{|\gradg \varrho|^{2}} \dx\dt  &=\int_{B_{R}(o)} W(\varrho)|\gradg u|^{2}\dx\dt\\&
			+\int_{B_{R}(o)} V(\varrho)\left|\nabla_{G}\left(\frac{u_\varrho}{\varrho f(\varrho)}\right)\right|^{2}\varrho^2 f^{2}(\varrho)\dx\dt.
		\end{align*}
		Moreover, if $V\geq 0$ and it satisfies \eqref{cond}, then for all $u \in C_{c}^{\infty}\left(B_{R}(o) \backslash\{o\}\right)$ there holds
		\begin{align*}
			\int_{B_{R}(o)} V(\varrho) \frac{(\lapg u)^{2}}{|\gradg \varrho|^{2}} \dx\dt & \geq\int_{B_{R}(o)} W(\varrho)|\gradg u|^{2}\dx\dt \\
			& +\int_{B_{R}(o)} V(\varrho)\left|\nabla_{G}\left(\frac{u_\varrho}{\varrho f(\varrho)}\right)\right|^{2} \varrho^2 f^{2}(\varrho)\dx\dt,
		\end{align*}
		where the $\varrho$ in suffix means the derivative with respect to the radial part $\varrho$.
	\end{proposition}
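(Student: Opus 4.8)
The plan is to reduce everything to one–dimensional radial integrals and then invoke the radial Hardy identity of Theorem~\ref{hardy-rell-iden}, but now in the \emph{shifted} dimension $Q+2$. First I would treat the identity for radial $u$. Passing to polar coordinates gives
\[
\int_{B_{R}(o)} V(\varrho)\frac{(\lapg u)^{2}}{|\gradg \varrho|^{2}}\dx\dt=\frac{|\Omega|}{2}\izfr V\Big(u_{\varrho\varrho}+\tfrac{Q-1}{\varrho}u_\varrho\Big)^2\varrho^{Q-1}\dr .
\]
The decisive algebraic step is the substitution $w:=u_\varrho/\varrho$, under which $u_{\varrho\varrho}+\tfrac{Q-1}{\varrho}u_\varrho=\varrho\big(w_\varrho+\tfrac{Q}{\varrho}w\big)$, so the integrand becomes $V\big(w_\varrho+\tfrac{Q}{\varrho}w\big)^2\varrho^{Q+1}$. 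Expanding the square and integrating the cross term by parts turns the weight into the $(Q+2)$-dimensional weight $\varrho^{Q+1}$ and yields
\[
\frac{|\Omega|}{2}\izfr V\Big(w_\varrho+\tfrac{Q}{\varrho}w\Big)^2\varrho^{Q+1}\dr=\frac{|\Omega|}{2}\Big[\izfr V w_\varrho^2\varrho^{Q+1}\dr-\izfr Q\tfrac{V_\varrho}{\varrho}w^2\varrho^{Q+1}\dr\Big].
\]
Now I would apply the radial Hardy identity underlying Theorem~\ref{hardy-rell-iden}, with the dimension parameter $Q+2$ and the Bessel pair $\big(V,\,W+QV_\varrho/\varrho\big)$ with positive solution $f$, to the radial function $w$. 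Since $w/f=u_\varrho/(\varrho f)$ and $Ww^2\varrho^{Q+1}=Wu_\varrho^2\varrho^{Q-1}$, the right-hand side becomes $\int_{B_{R}(o)}W|\gradg u|^2\dx\dt+\int_{B_{R}(o)}V|\nabla_{G}(u_\varrho/(\varrho f))|^2\varrho^2 f^2\dx\dt$, which is exactly the claimed identity.

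For the non-radial inequality I would decompose $u=\sm d_k(\varrho)\Phi_k(\sigma)$ and expand both sides using orthonormality of $\{\Phi_k\}$ and $-\mathcal{L}_\sigma\Phi_k=\lambda_k\Phi_k$, exactly as in the proof of Theorem~\ref{nr-rell}. Applying the radial identity just established to each profile $d_k$ cancels the genuinely radial contributions on both sides, and the difference (left minus right) collapses to $\tfrac12\sum_k\mathcal{R}_k$ with
\begin{align*}
\mathcal{R}_k&=16\lambda_k^2\izfr Vd_k^2\varrho^{Q-5}\dr-8\lambda_k\izfr V\Big(d_k''+\tfrac{Q-1}{\varrho}d_k'\Big)d_k\varrho^{Q-3}\dr\\
&\quad-4\lambda_k\izfr Wd_k^2\varrho^{Q-3}\dr-4\lambda_k\izfr Vd_k'^2\varrho^{Q-3}\dr .
\end{align*}
It then remains to show $\mathcal{R}_k\ge0$ for every $k$.

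The hard part is that $\izfr Vd_k'^2\varrho^{Q-3}\dr$ carries the ``wrong'' dimensional weight $\varrho^{Q-3}$ and cannot be handled by the previous identity directly. After integrating the second–order term $-8\lambda_k\int V(d_k''+\tfrac{Q-1}{\varrho}d_k')d_k\varrho^{Q-3}$ by parts, $\mathcal{R}_k$ acquires the combination $+4\lambda_k\big(\izfr Vd_k'^2\varrho^{Q-3}\dr-\izfr Wd_k^2\varrho^{Q-3}\dr\big)$, and the key trick is to apply the $(Q+2)$-dimensional Hardy identity of Theorem~\ref{hardy-rell-iden} a second time, now to the test function $d_k/\varrho^2$. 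Because $V\ge0$, discarding the nonnegative remainder produces a lower bound for this difference in terms of $\izfr V_\varrho d_k^2\varrho^{Q-4}\dr$ and $\izfr Vd_k^2\varrho^{Q-5}\dr$. Substituting it back, all the intermediate weights collapse and one is left with
\[
\mathcal{R}_k\ge 4\lambda_k\izfr\Big[4(\lambda_k-1)\frac{V}{\varrho^2}-V_{\varrho\varrho}+3\frac{V_\varrho}{\varrho}\Big]d_k^2\varrho^{Q-3}\dr ,
\]
which is precisely the expression reached at the end of the proof of Theorem~\ref{nr-rell}. Finally, using $4(\lambda_k-1)=k(k+n)-4\ge Q-5$ for $k\ge1$ together with the hypothesis \eqref{cond} and $V\ge0$, each $\mathcal{R}_k$ is nonnegative; summing over $k$ establishes the inequality. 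I expect this second application of the Hardy identity (and the bookkeeping of weights it resolves) to be the main obstacle, since it is what reconciles the $(Q+2)$-dimensional remainder with the angular corrections coming from the spherical harmonics.
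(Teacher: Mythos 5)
Your proof is correct, but it takes a genuinely different route from the paper's, which disposes of the proposition in two lines: one checks directly on the ODE that if $\big(V,\,W+Q\frac{V_r}{r}\big)$ is a $(Q+2)$-dimensional Bessel pair with positive solution $f(r)$, then $\big(V,\,W-(Q-1)\big\{\frac{V}{r^2}-\frac{V_r}{r}\big\}\big)$ is a $Q$-dimensional Bessel pair with positive solution $rf(r)$, and then Theorems~\ref{r-rell} and~\ref{nr-rell} apply as stated: their term $(Q-1)\int\big(\frac{V}{\varrho^2}-\frac{V_\varrho}{\varrho}\big)|\gradg u|^2$ recombines with the modified $W$ to give exactly the claimed right-hand side, and the remainder of Theorem~\ref{r-rell} with solution $\varrho f(\varrho)$ is precisely $\int V\big|\nabla_G\big(\frac{u_\varrho}{\varrho f}\big)\big|^2\varrho^2f^2$. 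You instead shift the test functions rather than the Bessel pair: the substitution $w=u_\varrho/\varrho$, via $u_{\varrho\varrho}+\frac{Q-1}{\varrho}u_\varrho=\varrho\big(w_\varrho+\frac{Q}{\varrho}w\big)$, converts the weights to the $(Q+2)$-dimensional ones so the hypothesis can be used as given, and your later use of $d_k/\varrho^2$ plays exactly the role of $b_k=d_k/\varrho$ in Step~5 of the proof of Theorem~\ref{nr-rell}. I verified the computations: your residual $\mathcal{R}_k$ is exactly what survives the subtraction (the extra $V_\varrho$-terms of the paper's residual are absorbed because here $W$ is not modified); integrating the second-order term by parts and applying the shifted Hardy identity to $d_k/\varrho^2$, then discarding the nonnegative remainder (this is where $V\geq 0$ enters), yields precisely $\mathcal{R}_k\geq 4\lambda_k\izfr\big[4(\lambda_k-1)\frac{V}{\varrho^2}-V_{\varrho\varrho}+3\frac{V_\varrho}{\varrho}\big]d_k^2\varrho^{Q-3}\dr$, after which \eqref{cond} and $4(\lambda_k-1)\geq Q-5$ for $k\geq 1$ (with $\mathcal{R}_0=0$ since $\lambda_0=0$) close the argument. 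One point you should make explicit: Theorem~\ref{hardy-rell-iden} is stated on the Grushin space of dimension $Q$, so invoking it ``with dimension parameter $Q+2$'' requires the (easy but necessary) remark that its radial core is the one-dimensional identity $\izfr Vv'^2\varrho^{N-1}\dr=\izfr \mathcal{W}v^2\varrho^{N-1}\dr+\izfr V\big((v/f)'\big)^2f^2\varrho^{N-1}\dr$, valid for an arbitrary dimension parameter $N$ whenever $(\varrho^{N-1}Vy')'+\varrho^{N-1}\mathcal{W}y=0$ has a positive solution $f$; this is implicit in the proof of Theorem~\ref{hardy-rell-iden} but not in its statement. In exchange for being longer, your unfolding is self-contained and makes transparent why the unchanged, $V$-only condition \eqref{cond} suffices after the dimension shift, whereas the paper's transmutation is shorter and exhibits the proposition as a formal corollary of the earlier theorems.
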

	\begin{proof}
		Notice that if $\big(V(r), W(r)+Q\frac{V_r(r)}{r}\big)$ is a $(Q+2)$-dimensional Bessel pair with positive solution $f(r)$, then $\bigg(V(r), W(r)-(Q-1)\biggl\{\frac{V(r)}{r^2}-\frac{V_r(r)}{r}\biggr\}\bigg)$ is a $Q$-dimensional Bessel pair on $(0,R)$ with positive solution $rf(r)$. Now applying Theorem \ref{r-rell} and Theorem \ref{nr-rell}, we deduce our results.
	\end{proof}
	
	As a consequence of this result, we obtain the following double-weighted Hardy-Rellich inequality. 	
	\begin{corollary}
		Let $G$ be the Grushin space with dimension $Q\geq 5$ and $B_R(o)$ denotes the $\varrho-$gauge ball with radius $R$ for $R>0$.  Then for all $u\in C_c^\infty(B_R(o) \setminus \{o\})$, we have
		\begin{align*}
			\int_{B_R(o)}\frac{|\mathcal{L}_{G} u|^2}{|\gradg \varrho|^2}\dx \dt\geq \frac{Q^2}{4}\int_{B_R(o)} \frac{|\nabla_{G} u|^2}{|\varrho|^2\big(1-\big(\frac{R}{\varrho}\big)^{-Q}\big)^2}\dx\dt.
		\end{align*}
	\end{corollary}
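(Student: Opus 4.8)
The plan is to apply Proposition~\ref{r-nr-dim-rell} with the constant radial weight $V\equiv 1$ and the potential
$$
W(\varrho)=\frac{Q^2}{4}\,\frac{1}{\varrho^2\big(1-(\varrho/R)^Q\big)^2}.
$$
Since $V_r\equiv 0$, the hypothesis $\big(V,\,W+Q\frac{V_r}{r}\big)$ being a $(Q+2)$-dimensional Bessel pair reduces to requiring that $(1,W)$ itself be a $(Q+2)$-dimensional Bessel pair on $(0,R)$. For the non-radial (inequality) form we also need $V\geq 0$ and condition \eqref{cond}; with $V\equiv 1$ the latter becomes $(Q-5)/\varrho^2\geq 0$, which holds precisely because $Q\geq 5$, and this is exactly the source of the dimensional restriction. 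Granting the Bessel-pair claim, the conclusion of Proposition~\ref{r-nr-dim-rell} with $V\equiv 1$ reads
$$
\int_{B_R(o)}\frac{(\lapg u)^2}{|\gradg\varrho|^2}\dx\dt \;\geq\; \int_{B_R(o)} W(\varrho)\,|\gradg u|^2\dx\dt \;+\; \int_{B_R(o)} \Big|\nabla_G\Big(\tfrac{u_\varrho}{\varrho f(\varrho)}\Big)\Big|^2\varrho^2 f^2(\varrho)\dx\dt,
$$
and discarding the non-negative remainder term, together with the identity $(R/\varrho)^{-Q}=(\varrho/R)^Q$, yields exactly the stated inequality.

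The heart of the matter is therefore to produce a positive solution of the $(Q+2)$-dimensional Bessel equation $\big(\varrho^{Q+1}f'\big)'+\varrho^{Q+1}W f=0$ on $(0,R)$. I claim that
$$
f(\varrho)=\varrho^{-Q/2}\big(1-(\varrho/R)^Q\big)^{1/2}
$$
works; it is manifestly positive on $(0,R)$. To verify this, write $h(\varrho)=1-(\varrho/R)^Q$, so that $f=\varrho^{-Q/2}h^{1/2}$ and $h'=-Q\varrho^{Q-1}/R^Q$, and use repeatedly that $\varrho^Q/R^Q=1-h$. A short computation first collapses the two terms of $\varrho^{Q+1}f'$ via $h+(1-h)=1$ into the clean expression $\varrho^{Q+1}f'=-\tfrac{Q}{2}\varrho^{Q/2}h^{-1/2}$; differentiating once more and again using $h+(1-h)=1$ gives $\big(\varrho^{Q+1}f'\big)'=-\tfrac{Q^2}{4}\varrho^{Q/2-1}h^{-3/2}$. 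This cancels precisely against $\varrho^{Q+1}Wf=\tfrac{Q^2}{4}\varrho^{Q/2-1}h^{-3/2}$, confirming that $f$ solves the equation and hence that $(1,W)$ is a $(Q+2)$-dimensional Bessel pair.

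I expect the main—indeed essentially the only—obstacle to be identifying the correct solution $f$: once the ansatz $\varrho^{-Q/2}h^{1/2}$ is in hand, the verification is a routine differentiation, and everything else is a direct invocation of Proposition~\ref{r-nr-dim-rell} followed by dropping a non-negative term.
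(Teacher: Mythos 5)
Your proof is correct and follows exactly the paper's route: the paper likewise applies Proposition~\ref{r-nr-dim-rell} with the $(Q+2)$-dimensional Bessel pair $\big(1,\tfrac{Q^2}{4}r^{-2}(1-(r/R)^Q)^{-2}\big)$, uses $Q\geq 5$ to satisfy \eqref{cond} for $V\equiv 1$, and drops the non-negative remainder term. Your only addition is the explicit verification that $f(\varrho)=\varrho^{-Q/2}\big(1-(\varrho/R)^Q\big)^{1/2}$ is a positive solution of the $(Q+2)$-dimensional Bessel ODE (which checks out), a detail the paper leaves implicit.
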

	\begin{proof}
		Applying $\left(1,\frac{Q^2}{4}r^{-2}\left(1-\left(\frac{r}{R}\right)^{Q}\right)^{-2}\right)$ as $(Q+2)$ dimensional Bessel pair in Proposition~\ref{r-nr-dim-rell} and then removing the non-negative remainder term, we obtain the result.
	\end{proof}
	
	\begin{remark}
		{\rm
			Recently, the above inequality has been obtained in the Cartan-Hadamard manifolds and nilpotent Lie groups in  \cite{RSY} and  \cite{SY-arxiv} respectively. 
			
		}
	\end{remark}	
	
	We can also deduce several Caffarelli-Kohn-Nirenberg-type inequalities. Recently, the sharp $L^2$-Caffarelli-Kohn-Nirenberg inequalities for Grushin vector fields were investigated in \cite{flynn}, and in \cite{cfl} the sharp second-order uncertainty principles were studied in the Euclidean space. Now we establish a few second-order uncertainty principles (USP) and among them, Heisenberg and Hydrogen USP are the two main results.
	\begin{corollary}\label{ckn-cor}
		Let $Q\geq 5$. For all $u\in C_c^\infty(\rno \setminus \{o\})$, we have
		\begin{itemize}
			\item[(a)]
			\begin{align*}
				\int_{\rno}\frac{|\mathcal{L}_{G} u|^2}{|\gradg \varrho|^2}\dx \dt\geq \int_{\rno} (Q+2-\varrho^2)|\gradg u|^2\dx\dt.
			\end{align*}
			\item[(b)]
			\begin{align*}
				\int_{\rno}\frac{|\mathcal{L}_{G} u|^2}{|\gradg \varrho|^2}\dx \dt\geq \int_{\rno}\bigg(\frac{Q+1}{\varrho}-1\bigg)|\gradg u|^2\dx\dt.
			\end{align*}
			\item[(c)] for $b<1:$
			\begin{align*}
				\int_{\rno}\frac{|\mathcal{L}_{G} u|^2}{|\gradg \varrho|^2}\dx \dt\geq\int_{\rno}\bigg(\frac{Q+1-b}{\varrho^{b+1}}-\frac{1}{\varrho^{2b}}\bigg)|\gradg u|^2\dx\dt.
			\end{align*}
			\item[(d)]for $b>1:$
			\begin{align*}
				\int_{\rno}\frac{|\mathcal{L}_{G} u|^2}{|\gradg \varrho|^2}\dx \dt\geq\int_{\rno}\bigg(\frac{Q+b-1}{\varrho^{b+1}}-\frac{1}{\varrho^{2b}}\bigg)|\gradg u|^2\dx\dt.
			\end{align*}
		\end{itemize}
	\end{corollary}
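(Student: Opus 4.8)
The plan is to read each of the four inequalities as the conclusion of the inequality (non-radial) part of Proposition~\ref{r-nr-dim-rell} applied with the constant weight $V\equiv 1$, after discarding the non-negative remainder $\int_{\rno}V(\varrho)\big|\gradg(u_\varrho/(\varrho f))\big|^2\varrho^2 f^2$. With $V\equiv 1$ one has $V_\varrho=0$, so the hypothesis that $\big(V,\,W+Q V_\varrho/\varrho\big)$ be a $(Q+2)$-dimensional Bessel pair collapses to the requirement that $(1,W)$ itself be a $(Q+2)$-dimensional Bessel pair, i.e. that $(\varrho^{Q+1}f')'+\varrho^{Q+1}Wf=0$ admit a positive solution $f$ on $(0,\infty)$. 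Moreover the admissibility condition \eqref{cond} reduces for $V\equiv 1$ to $(Q-5)/\varrho^2\ge 0$, which is exactly the standing hypothesis $Q\ge 5$; hence Proposition~\ref{r-nr-dim-rell} applies and the only real task is to exhibit, for each prescribed $W$, an explicit positive $f$.

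First I would observe that (a), (b) and (c) are a single family $W_b(\varrho)=\frac{Q+1-b}{\varrho^{b+1}}-\frac{1}{\varrho^{2b}}$ evaluated at $b=-1$, $b=0$ and general $b<1$: indeed $W_{-1}=Q+2-\varrho^2$ and $W_0=\frac{Q+1}{\varrho}-1$ are precisely the weights of (a) and (b). For this family the candidate is $f(\varrho)=\exp\!\big(-\varrho^{1-b}/(1-b)\big)$, for which $f'/f=-\varrho^{-b}$. A direct substitution gives $(\varrho^{Q+1}f')'=\big(\varrho^{Q+1-2b}-(Q+1-b)\varrho^{Q-b}\big)f$, and adding $\varrho^{Q+1}W_b f=\big((Q+1-b)\varrho^{Q-b}-\varrho^{Q+1-2b}\big)f$ the two contributions cancel identically; since $b<1$ the exponent is real and finite, so $f>0$ on $(0,\infty)$. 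Feeding this $f$ into Proposition~\ref{r-nr-dim-rell} and dropping the remainder yields (a), (b) and (c) at once.

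The remaining part (d), with $b>1$ and weight $\widetilde W_b(\varrho)=\frac{Q+b-1}{\varrho^{b+1}}-\frac{1}{\varrho^{2b}}$, is where the argument needs the most care, since the ``constant-like'' solution of the previous paragraph produces the wrong coefficient $Q+1-b$ in place of $Q+b-1$. The idea is to pass to the \emph{other} solution of the $(Q+2)$-dimensional Bessel equation, namely the one with the $\varrho^{2-(Q+2)}=\varrho^{-Q}$ behaviour at the origin: I would take $f(\varrho)=\varrho^{-Q}\exp\!\big(\varrho^{1-b}/(1-b)\big)$, so that $f'/f=-Q\varrho^{-1}+\varrho^{-b}$. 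Expanding $(\varrho^{Q+1}f')'$ one finds the $\varrho^{Q+1-2b}$ terms again cancel against the $-1/\varrho^{2b}$ part of $\widetilde W_b$, while the surviving $\varrho^{Q-b}$ coefficient is $c-(Q+b-1)$, forcing $c=Q+b-1$, which is exactly the weight in (d). A short check shows $f>0$ on $(0,\infty)$: for $b>1$ the exponential lies in $(0,1]$ and decays to $0$ at the origin faster than $\varrho^{-Q}$ blows up, so $f\to 0$ at both ends and stays positive in between.

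Thus all four statements reduce to the single mechanism ``take $V\equiv 1$, supply an explicit Bessel solution, then discard the non-negative remainder.'' The main obstacle is the verification for (d): identifying the correct second solution $\varrho^{-Q}\exp(\varrho^{1-b}/(1-b))$ and confirming both the exact ODE cancellation and its positivity on $(0,\infty)$, the rest being the routine substitutions indicated above together with the elementary reduction of \eqref{cond} under $V\equiv 1$.
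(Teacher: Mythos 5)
Your proposal is correct and takes essentially the same route as the paper: the published proof likewise applies the second part of Proposition~\ref{r-nr-dim-rell} with $V\equiv 1$ and exactly the four $(Q+2)$-dimensional Bessel pairs you use, discarding the non-negative remainder term. The only difference is one of detail: the paper merely lists the pairs, while you additionally verify the reduction of \eqref{cond} to $Q\geq 5$ and exhibit the explicit positive solutions, including the correct second solution $\varrho^{-Q}\exp\big(\varrho^{1-b}/(1-b)\big)$ for part (d), and these checks are accurate.
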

	\begin{proof}
		We apply the second part of Proposition~\ref{r-nr-dim-rell} with appropriate $(Q+2)$-dimensional Bessel pairs and after deducting the remainder terms we obtain the results. Here are the respective Bessel pairs: $(1,Q+2-r^2)$, $\big(1,\frac{Q+1}{r}-1\big)$, $\big(1,\frac{Q+1-b}{r^{b+1}}-\frac{1}{r^{2b}}\big)$ for $b<1$, and $\big(\frac{Q+b-1}{r^{b+1}}-\frac{1}{r^{2b}}\big)$ for $b>1$ on $(0,\infty)$.
	\end{proof}
	We can also deduce the following second-order Caffarelli-Kohn-Nirenberg (CKN) inequalities on Grushin space from the above Corollary \ref{ckn-cor}. Also, see \cite{HZ, SL} for certain types of CKN inequalities. 
	\begin{corollary}
		Let $Q\geq 5$. For all $u\in C_c^\infty(\rno \setminus \{o\})$, we have
		\begin{itemize}
			\item[(a)] $(\text{Second order Heisenberg Uncertainty Principle})$
			\begin{align*}
				\bigg(\int_{\rno}\frac{|\mathcal{L}_{G} u|^2}{|\gradg \varrho|^2}\dx \dt\bigg)^{\frac{1}{2}}\bigg(\int_{\rno}\varrho^2|\gradg u|^2\dx\dt\bigg)^{\frac{1}{2}}\geq \frac{(Q+2)}{2}\int_{\rno} |\gradg u|^2\dx\dt.
			\end{align*}
			Moreover, the sharp constant is $\frac{(Q+2)}{2}$, and achieved by the functions of the form $u(x,t)=\alpha e^{-\beta\varrho^2(x,t)}$, $\alpha\in \mathbb{R}$, $\beta>0$.
			\item[(b)] $(\text{Second order Hydrogen Uncertainty Principle})$
			\begin{align*}
				\bigg(\int_{\rno}\frac{|\mathcal{L}_{G} u|^2}{|\gradg \varrho|^2}\dx \dt\bigg)^{\frac{1}{2}}\bigg(\int_{\rno}|\gradg u|^2\dx\dt\bigg)^{\frac{1}{2}}\geq \frac{(Q+1)}{2}\int_{\rno} \frac{|\gradg u|^2}{\varrho}\dx\dt.
			\end{align*}
			Moreover, the sharp constant is $\frac{(Q+1)}{2}$, and achieved by the functions of the form $u(x,t)=\alpha(1+\beta\varrho(x,t)) e^{-\beta\varrho(x,t)}$, $\alpha\in \mathbb{R}$, $\beta>0$.
			\item[(c)] for $b<1:$
			\begin{align*}
				\bigg(\int_{\rno}\frac{|\mathcal{L}_{G} u|^2}{|\gradg \varrho|^2}\dx \dt\bigg)^{\frac{1}{2}}\bigg(\int_{\rno}\frac{|\gradg u|^2}{\varrho^{2b}}\dx\dt\bigg)^{\frac{1}{2}}\geq \frac{(Q+1-b)}{2}\int_{\rno} \frac{|\gradg u|^2}{\varrho^{b+1}}\dx\dt.
			\end{align*}
			Moreover, the sharp constant is $\frac{(Q+1-b)}{2}$, and achieved by the functions of the form $u(x,t)=\alpha\int_{\varrho(x,t)}^\infty s\exp\left(-\beta\frac{s^{1-b}}{1-b}\right){\rm d}s$, $\alpha\in \mathbb{R}$, $\beta>0$.
			\item[(d)]for $b>1:$
			\begin{align*}
				\bigg(\int_{\rno}\frac{|\mathcal{L}_{G} u|^2}{|\gradg \varrho|^2}\dx \dt\bigg)^{\frac{1}{2}}\bigg(\int_{\rno}\frac{|\gradg u|^2}{\varrho^{2b}}\dx\dt\bigg)^{\frac{1}{2}}\geq \frac{(Q+b-1)}{2}\int_{\rno} \frac{|\gradg u|^2}{\varrho^{b+1}}\dx\dt.
			\end{align*}
			Moreover, the sharp constant is $\frac{(Q+b-1)}{2}$, and achieved by the functions of the form $u(x,t)=\alpha\int_{\varrho(x,t)}^\infty s\exp\left(-\beta\frac{s^{b-1}}{b-1}\right){\rm d}s$, $\alpha\in \mathbb{R}$, $\beta>0$.
		\end{itemize}
	\end{corollary}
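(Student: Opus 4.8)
The plan is to deduce each of the four uncertainty principles from the corresponding Caffarelli--Kohn--Nirenberg inequality in Corollary~\ref{ckn-cor} by a dilation (scaling) argument followed by an elementary optimization. I will describe the scheme on part (a); parts (b)--(d) are identical once the homogeneity bookkeeping is adjusted. Abbreviate
\[
A(u)=\int_{\rno}\frac{|\lapg u|^2}{|\gradg \varrho|^2}\dx\dt,\qquad B(u)=\int_{\rno}\varrho^2|\gradg u|^2\dx\dt,\qquad D(u)=\int_{\rno}|\gradg u|^2\dx\dt,
\]
so that Corollary~\ref{ckn-cor}(a) reads $A(u)\geq (Q+2)D(u)-B(u)$ and the target is $\sqrt{A(u)}\,\sqrt{B(u)}\geq \tfrac{Q+2}{2}D(u)$.

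First I would introduce the dilated function $u_\lambda:=u\circ\delta_\lambda$ and record how the three functionals transform. Using $\lapg(u\circ\delta_\lambda)=\lambda^2(\lapg u)\circ\delta_\lambda$, the elementary first-order identity $\gradg(u\circ\delta_\lambda)=\lambda(\gradg u)\circ\delta_\lambda$, the homogeneity $\varrho\circ\delta_\lambda=\lambda\varrho$, the scale-invariance of $|\gradg\varrho|$ (it is degree-zero homogeneous, being a function of the angular variable $\phi$ only, since $|\gradg\varrho|^2=\sin\phi$), and the Jacobian factor $\lambda^{Q}$, a change of variables gives
\[
A(u_\lambda)=\lambda^{4-Q}A(u),\qquad B(u_\lambda)=\lambda^{-Q}B(u),\qquad D(u_\lambda)=\lambda^{2-Q}D(u).
\]
Applying Corollary~\ref{ckn-cor}(a) to $u_\lambda$ and multiplying through by $\lambda^{Q}$ then produces the one-parameter family
\[
\lambda^{4}A(u)+B(u)\geq (Q+2)\lambda^{2}D(u)\qquad\text{for all }\lambda>0.
\]

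Next I would extract the product inequality. With $s=\lambda^2>0$ the family becomes the quadratic inequality $A(u)\,s^2-(Q+2)D(u)\,s+B(u)\geq 0$, valid for all $s>0$; since the vertex $s_\ast=(Q+2)D(u)/(2A(u))$ is positive, nonnegativity at $s_\ast$ forces the discriminant to be nonpositive, i.e.\ $(Q+2)^2D(u)^2\leq 4A(u)B(u)$, which is precisely $\sqrt{A(u)}\,\sqrt{B(u)}\geq\tfrac{Q+2}{2}D(u)$. For (b)--(d) the same three steps apply after replacing $B$ and $D$ by the appropriate weighted Dirichlet integrals (with weights $\varrho^{-2b}$ and $\varrho^{-(b+1)}$ for (c)--(d), and $1$ and $\varrho^{-1}$ for (b)) and rescaling the dilation parameter; the homogeneity exponents differ, but the resulting inequality is again quadratic in a single positive variable, and the discriminant condition delivers the stated constants $\tfrac{Q+1}{2}$, $\tfrac{Q+1-b}{2}$ and $\tfrac{Q+b-1}{2}$.

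The hard part will be the sharpness claim together with the identification of the extremizers. I would argue that equality propagates back through the whole chain exactly when two conditions hold simultaneously: (i) the nonnegative remainder term dropped in passing from Proposition~\ref{r-nr-dim-rell} to Corollary~\ref{ckn-cor} vanishes, i.e.\ $(u_\lambda)_\varrho/(\varrho f(\varrho))$ is constant in $\varrho$, forcing $u_\varrho\propto\varrho f(\varrho)$; and (ii) the discriminant step is saturated, i.e.\ $\lambda$ equals the optimal value $s_\ast^{1/2}$. For part (a) the $(Q+2)$-dimensional Bessel pair $(1,\,Q+2-r^2)$ has positive solution $f(r)=e^{-r^2/2}$, so (i) gives $u_\varrho\propto\varrho\,e^{-\beta\varrho^2}$ and hence $u(x,t)=\alpha e^{-\beta\varrho^2(x,t)}$; for part (b) the pair $(1,\,(Q+1)/r-1)$ has solution $f(r)=e^{-r}$, yielding $u(x,t)=\alpha(1+\beta\varrho)e^{-\beta\varrho}$; and for (c)--(d) the corresponding first-order ODEs integrate to the displayed incomplete-integral profiles. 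Substituting each radial profile back and verifying that both sides of the uncertainty principle agree confirms optimality. The delicate point throughout is keeping the two equality conditions compatible for a matching choice of $\beta$ and $\lambda$, and checking that these (non-compactly-supported) extremizers arise as genuine limits of admissible functions in $C_c^\infty(\rno\setminus\{o\})$, so that the constants indeed cannot be improved within the stated class.
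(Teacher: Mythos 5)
Your proposal follows essentially the same route as the paper: the product inequalities are obtained by applying Corollary~\ref{ckn-cor} to the dilates $u_\lambda$ and optimizing over the dilation parameter (your discriminant argument is exactly equivalent to the paper's explicit choice of $\lambda$), and sharpness is settled by substituting the stated radial profiles, with the same caveat about density of $C_c^\infty(\rno\setminus\{o\})$. The only difference is motivational: where you trace the extremizers back through the vanishing of the Bessel-pair remainder in Proposition~\ref{r-nr-dim-rell}, the paper simply takes the profiles as given and verifies equality directly by computing the three integrals via Gamma functions (doing part (c) in detail and treating (a) and (b) as the cases $b=-1$ and $b=0$), which is also the final verification step your plan falls back on.
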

	\begin{proof}
		Using the standard scaling argument the proof follows. For clarity, we will write detailed proof here. We will only write the proof of part (a), and others will follow similarly. Let us write the dilation for positive real number $\lambda$, on Grushin space by $\delta_\lambda(x,t):=(\lambda x, \lambda^2 t)$, where $(x,t) \in \rno\setminus\{o\}$. Now for $u\in C_c^\infty(\rno \setminus \{o\})$, let us define the scaling of the function by $u_\lambda(x,t):=\lambda^{\frac{Q-2}{2}}u(\delta_\lambda(x,t))$, and it is easy to notice that $u_\lambda\in C_c^\infty(\rno \setminus \{o\})$. Now, consider the change of variable $\lambda x=z$ and $\lambda^2 t= \mu$ and we are denoting the Grushin gradient and the Grushin operator as $\nabla_{g}$ and $\mathcal{L}_g$ in terms of $(z,\mu)$ variable. Then we have the following
		\begin{align*}
			\dx\dt = \lambda^{-Q}{\rm d}z{\rm d}\mu,
		\end{align*}
		\begin{align*}
			\varrho^2(x,t)=\big(|x|^4+4t^2\big)^{\frac{1}{2}}=\lambda^{-2}\varrho^2(z,\mu),
		\end{align*}
		and
		\begin{align*}
			|\gradg\varrho(x,t)|^2=\frac{|x|^2}{\varrho^2(x,t)}=\frac{\lambda^{-2}|z|^2}{\lambda^{-2}\varrho^2(z,\mu)}= |\nabla_{g} \varrho (z,\mu)|^2.
		\end{align*}
		Moreover, we observe that
		\begin{align*}
			|\gradg u_\lambda(x,t)|^2=\lambda^Q|\nabla_{g} u(z,\mu)|^2,	
		\end{align*}
		and
		\begin{align*}
			\lapg u_\lambda(x,t)= \lambda^{\frac{Q-2}{2}} \lambda^2 \mathcal{L}_g u(z,\mu).
		\end{align*}
		
		Applying Corollary \ref{ckn-cor}, part (a), for $u_\lambda\in C_c^\infty(\rno \setminus \{o\})$, with $\lambda>0$, we have
		\begin{align*}
			\int_{\rno}\frac{|\mathcal{L}_{G} u_\lambda|^2}{|\gradg \varrho|^2}\dx \dt+\int_{\rno} \varrho^2|\gradg u_\lambda|^2\dx\dt\geq (Q+2)\int_{\rno} |\gradg u_\lambda|^2\dx\dt.
		\end{align*}
		Then writing in terms of $u$ and performing the above change of variable we deduce
		\begin{align*}
			\lambda^{2}\int_{\rno}\frac{|\mathcal{L}_{g} u|^2}{|\nabla_g \varrho|^2}\,{\rm d}z{\rm d}\mu+\lambda^{-2}\int_{\rno} \varrho^2|\nabla_g u|^2\,{\rm d}z{\rm d}\mu\geq (Q+2)\int_{\rno} |\nabla_g u|^2\,{\rm d}z{\rm d}\mu.
		\end{align*}
		Finally, choose
		\begin{align*}
			\lambda=\left(\frac{\int_{\rno} \varrho^2|\nabla_g u|^2\,{\rm d}z{\rm d}\mu}{\int_{\rno}\frac{|\mathcal{L}_{g} u|^2}{|\nabla_g \varrho|^2}\,{\rm d}z{\rm d}\mu}\right)^{\frac{1}{4}},
		\end{align*}
		and we get the desired result. 
		
		Now let us discuss the optimality for the part (c). Consider the smooth function 
		\begin{align*}
			u_{\alpha,\beta}(x,t)=\alpha\int_{\varrho(x,t)}^\infty s\exp\left(-\beta\frac{s^{1-b}}{1-b}\right){\rm d}s,
		\end{align*}
		for $\alpha\in \mathbb{R}$, $\beta>0$, and $1-b>0$.  We can easily notice that
		\begin{align*}
			\frac{\partial u_{\alpha,\beta}}{\partial \varrho}= -\alpha \varrho \exp\left(-\beta\frac{\varrho^{1-b}}{1-b}\right),
		\end{align*}
		and
		\begin{align*}
			\frac{\partial^2 u_{\alpha,\beta}}{\partial \varrho^2}=	-\alpha  \exp\left(-\beta\frac{\varrho^{1-b}}{1-b}\right)+\alpha\beta \varrho^{1-b} \exp\left(-\beta\frac{\varrho^{1-b}}{1-b}\right).
		\end{align*}
		
		Now using polar coordinate decomposition, change of variable, and properties of Gamma functions, we have
		\begin{align*}
			\int_{\rno} \frac{|\gradg u_{\alpha,\beta}|^2}{\varrho^{b+1}}\dx\dt=\frac{1}{2}|\Omega|\frac{\alpha^2}{2\beta}\left(\frac{1-b}{2\beta}\right)^{\frac{Q}{1-b}}\left(\frac{Q}{1-b}\right)\Gamma\left(\frac{Q}{1-b}\right),
		\end{align*}
		\begin{align*}
			\int_{\rno}\frac{|\gradg u_{\alpha,\beta}|^2}{\varrho^{2b}}\dx\dt=\frac{1}{2}|\Omega|\frac{\alpha^2}{2\beta}\left(\frac{1-b}{2\beta}\right)^{\frac{Q}{1-b}+1}\left(\frac{Q+1-b}{1-b}\right)\left(\frac{Q}{1-b}\right)\Gamma\left(\frac{Q}{1-b}\right),
		\end{align*}
		and
		\begin{align*}
			\int_{\rno}\frac{|\mathcal{L}_{G} u_{\alpha,\beta}|^2}{|\gradg \varrho|^2}\dx \dt=\frac{1}{2}|\Omega|\frac{\alpha^2Q}{2\beta}\left(\frac{1-b}{2\beta}\right)^{\frac{Q}{1-b}-1}\left(\frac{Q+1-b}{4}\right)\Gamma\left(\frac{Q}{1-b}\right).
		\end{align*}
		Notice that as all the terms of the inequality are finite, and hence inequality holds for such functions, which can be justified by the density arguments of smooth compactly supported functions. Therefore, we have 
		
		\begin{align*}
			\frac{\left(\int_{\rno}\frac{|\mathcal{L}_{G} u_{\alpha,\beta}|^2}{|\gradg \varrho|^2}\dx \dt\right)\left(\int_{\rno}\frac{|\gradg u_{\alpha,\beta}|^2}{\varrho^{2b}}\dx\dt\right)}{\left(\int_{\rno} \frac{|\gradg u_{\alpha,\beta}|^2}{\varrho^{b+1}}\dx\dt\right)^2}=\left(\frac{Q+1-b}{2}\right)^2.
		\end{align*}
		The proof of the optimality of the constant in part (c) has been concluded. The other cases can be proved similarly by considering corresponding optimizing functions. It should be noted that for parts (a) and (b), the optimizing function is a special case of part (c) with $b=-1$ and $b=0$ respectively.
	\end{proof}
	
	\begin{remark}
		{\rm
			The cases (a) and (b) above of the above theorem can be compared to  \cite[Theorem 2.1, 2.4]{cfl}. Moreover, the inequalities obtained in parts (c) and (d) are related to a special case of \cite[Theorem 1.3]{newwgckn}. It's worth mentioning that by exploiting the first part of Proposition \ref{r-nr-dim-rell} and the corresponding Bessel pair, we can obtain the results for radial operators as well. These second-order $L^2$-Caffarelli-Kohn-Nirenberg inequalities are a completely new addition to the literature for Grushin vector fields, and provide a natural extension of the first-order case presented in \cite{flynn}. Also, certain uncertainty principle related to Rellich inequality was studied in \cite{IK2}.
		}
	\end{remark}
	
	\medspace

	\section*{Acknowledgments} 
	The research of D.~Ganguly is partially supported by the SERB MATRICS research grant (MTR/2023/000331). The author P.~Roychowdhury is partially supported by the National Theoretical Science Research Center Operational Plan (Project number: 112L104040) and the project ID 2022-NAZ-0274/PER ``Pattern formation in nonlinear phenomena - funded by the European Union – Next Generation EU”, CUP H53D23001930006.

	 \par\bigskip\noindent
{\bf Competing interests.} The authors have no competing interests to declare that are relevant to the content of this article.

 \par\bigskip\noindent
{\bf Data availability statement.} Data sharing not applicable to this article as no datasets were generated or analyzed during the current study.
	
	\noindent 
%%%%%%%%%%%%%%%%%%%%%%%%%%%%%%%%%%%%%%%%%%%%%%%%	
		
\end{document}